\documentclass{article}
\usepackage{amsmath, amssymb, amsthm, epsf, graphicx, amscd, amsfonts, amscd, multirow}

\usepackage{setspace}
\doublespacing

\topmargin-1in \textheight9.9in \textwidth6.8in \pagestyle{plain}
\oddsidemargin -0.2in \evensidemargin -0.2in

\newtheorem{theorem}{Theorem}[section]
\newtheorem{lemma}[theorem]{Lemma}
\newtheorem{proposition}[theorem]{Proposition}
\newtheorem{corollary}[theorem]{Corollary}

\def\bb #1{ {\mathbb #1} }
\def\c #1{ {\mathcal #1} }
\def\f #1{ {\mathfrak #1} }
\def\b #1{ {\bf #1} }



\begin{document}
\title{Families of generalized Kloosterman sums}

\author{C. Douglas Haessig \and Steven Sperber}

\date{\today}
\maketitle

\begin{abstract}
We construct $p$-adic relative cohomology for a family of toric exponential sums which generalize the classical Kloosterman sums. Under natural hypotheses such as quasi-homogeneity and nondegeneracy, this cohomology is acyclic except in the top dimension. Our construction enables  sufficiently sharp estimates for the action of Frobenius on cohomology so that our earlier work may be applied to the $L$-functions coming from linear algebra operations on these families to deduce a number of basic properties.
\end{abstract}


\section{Introduction}\label{S: Intro}

Our aim in the following is to construct relative $p$-adic cohomology for a family of toric exponential sums generalizing the classic hyperkloosterman sums. In the hyperkloosterman case, the relevant family of regular functions on $\bb G_m^n$ is given by
\[
K_n(\lambda, x) := x_1 + \cdots + x_n + \frac{\Lambda}{x_1 \cdots x_n}.
\]
Let $\Theta$ be a non-trivial additive character of the field $ {\bb F}_q $ of $q = p^a$ elements, $p$ a prime number. 
For any $\bar \lambda \in \overline{\bb F}_q^*$, the hyperkloosterman sum (over $\bb F_q(\bar \lambda)$) is given by
\[
Kl_{n+1}(\bar \lambda) := \sum_{\bar x \in (\bb F_q(\bar \lambda)^*)^n} \Theta_{\bar \lambda}( K_n(\bar \lambda, \bar x))
\]
where $\Theta_{\bar \lambda}$ (= $ \Theta \circ Tr_{\bb F_q(\bar \lambda)/{\bb F}_p} $) is a non-trivial additive character of $\bb F_q(\bar \lambda)$. In our generalization, we replace the linear form $\sum_{i=1}^n x_i$ by an arbitrary quasi-homogeneous nondegenerate Laurent polynomial $\bar f(x)$, and we replace the deforming monomial $\Lambda (x_1 \cdots x_n)^{-1}$ by an arbitrary monomial $ \Lambda x^\mu$ where $\mu \in \bb Z^n$ is a lattice point which does not lie on the affine hyperplane spanned by the monomials in the support of $\bar f$. 

Let $\bar f(x) = \sum \bar A(v) x^v \in \bb F_q[x_1^\pm, \ldots, x_n^\pm]$, and denote by $Supp(\bar f)$, the support of $\bar f$, the set of $v \in \bb Z^n$ such that $\bar A(v) \not= 0$. Let $\sigma = \Delta(\bar f)$ be the convex polytope which is the convex closure of $Supp(\bar f)$; let $\Delta_\infty(\bar f)$ be the convex closure of $\Delta(\bar f) \cup \{ 0 \}$. Let $Cone(\bar f) := Span_{\bb R_{\geq 0}} \Delta_\infty(\bar f)$, the cone over $\Delta_\infty(\bar f)$ consisting of the union of rays from 0 passing through $\Delta_\infty(\bar f)$. We assume throughout this work that $dim \> \Delta_\infty(\bar f) = dim \> Cone(\bar f) = n$. Set $M(\bar f) := Cone(\bar f) \cap \bb Z^n$.

Let $l_\sigma(x) = 1$ be the equation of the affine hyperplane spanned by $Supp(\bar f)$, with $l_\sigma(x) \in \bb Q[x_1, \ldots, x_n]$ a rational linear form. With respect to the Newton polytope $\Delta_\infty(\bar f)$ and the hyperplane $l_\sigma(x) = 1$, there are four possible places the deforming monomial $x^\mu$ can appear: above the hyperplane $l_\sigma(x) = 1$, on the hyperplane, or under the hyperplane but inside or outside of the Newton polytope; see Figure \ref{F: deform}.
\begin{figure}
\caption{Four possibilities for $\mu$}\label{F: deform}
  \centering
\includegraphics[width=5in]{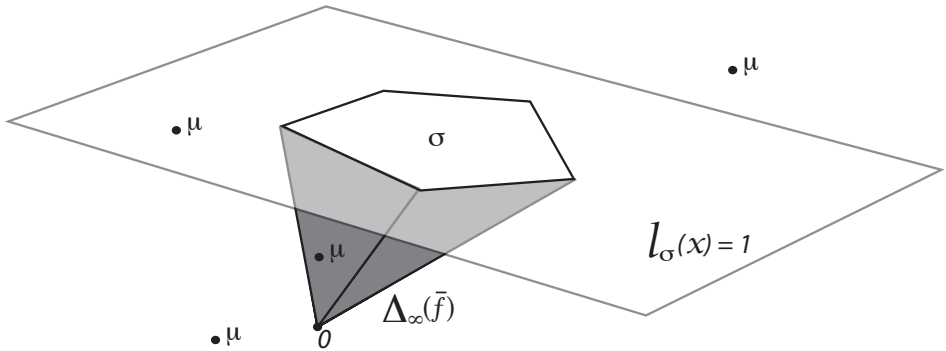}
\end{figure}
In the case when $\mu$ lies inside the Newton polytope, in particular $\mu \in M(\bar f)$ and $l_\sigma(\mu) < 1$, the family $\bar F(\Lambda, x) := \bar f(x) + \Lambda x^\mu$
is a lower order deformation and has been studied in \cite{H-S} in some detail. We will not consider this case in the present work. If $\mu$ lies on the hyperplane $l_\sigma(\mu) = 1$, it seems more difficult to obtain precise $p$-adic estimates and we do not treat this case here. Instead we focus here on the case $l_\sigma(\mu) > 1$, or $\mu \not \in M(\bar f)$ and $l_\sigma(\mu) < 1$.  Because the monomial $\mu$ is in our present case a vertex of $\Delta_\infty(\bar f, \mu) := $convex closure of $\Delta_\infty(\bar f) \cup \{\mu\}$, and will have polyhedral weight equal to 1 (like the other vertices, other than zero, of $\Delta_\infty(\bar f, \mu)$), we view the family
\[
\bar F(\Lambda, x) = \bar f(x) + \Lambda x^\mu
\]
as an ``isobaric'' deformation of the sum defined by $\bar f(x)$. 

Our goal in this work is to establish $p$-adic relative cohomology for our family with enough understanding of the relative Frobenius map on cohomology that we may use our earlier general results on $\sigma$-modules \cite{H-S} with polyhedral $p$-adic growth to establish some applications concerning degree (as rational function) and  total degree for the symmetric power $L$-functions (and $L$-functions associated with other linear algebra operations) for the given family. See Theorems \ref{T: Main} and \ref{T: Main2} in Section \ref{S: L-function} for details of these results.

In this work and in our earlier work \cite{H-S} on lower-order deformations of nondegenerate toric sums, we proceed cohomologically to construct $p$-adic relative cohomology (which vanishes in all but the top dimension). We then apply fairly arbitrary linear algebra operations to the remaining relative cohomology space and its Frobenius map, and construct a new complex satisfying a trace formula for the L-function associated with the linear algebra operation. Using methods which are seemingly special to Dwork's precohomological constructions here,  we obtain some  general results pertaining to degree and total degree of these L-functins by careful examination of the complexes on relative cohomology. We hope in a future work to compute the cohomology of this complex in the case of the classical hyperkloosterman family. In that setting then, we expect cohomological methods to give  more precise information. The hyperkloosterman family and its symmetric power $L$-functions have been studied previously by Fu-Wan \cite{FuWan-$L$-functionssymmetricproducts-2005}, \cite{FuWan-Functional}, \cite{FuWan-Trivial}, \cite{FuWan-KloostermanZ} using $\ell$-adic cohomology, and by Robba \cite{Robba-SymmetricPowersof-1986} in the case of the one dimensional Kloosterman family $K_1(\lambda, x) = x + \Lambda/x$ using $p$-adic cohomology. Also Tsuzuki in an unpublished work \cite{Tsuzuki-Kloosterman} has given another construction of relative cohomology for the Bessel functions using rigid cohomology. In addition, we believe it is possible to further relax the hypothesis that $\bar f(x)$ is quasihomogeneous and allow more complicated multi-parameter families.  

We thank Carmala Garzione for help with generating the figures.

\section{Fibered family over $\bb F_q$}\label{S: 1}

We keep the notations from the introduction above. If $\tau$ is a closed face of $\Delta_\infty(\bar f, \mu)$ not containing 0 we say $\tau $ is a \emph{face at $\infty$}. Assuming $\bar f$ is quasihomogeneous, then $\sigma = \Delta(\bar f)$ is the unique face of $\Delta_\infty(\bar f)$ of codimension one at $\infty$. Let $\mu \in \bb Z^n$ with $l_{\sigma}(\mu) \not= 1$. The cases $l_{\sigma}(\mu) < 1$ and $l_{\sigma}(\mu) > 1$ have some differences in our treatment. See Figure \ref{F: Fig1}.
\begin{figure}
\caption{Two cases}\label{F: Fig1}
  \centering
\includegraphics[width=5in]{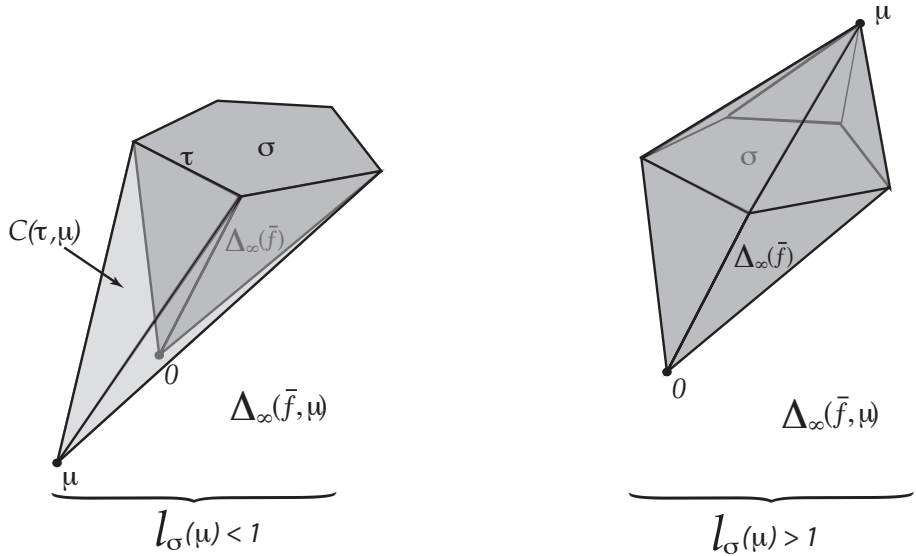}
\end{figure}
We think it is preferable to treat the case $l_{\sigma}(\mu) < 1$, $\mu \not\in Cone(\bar f)$ first and deal with the necessary changes to treat $l_{\sigma}(\mu) > 1$ afterwards. So we will from now on, unless stated explicitly otherwise, assume that $l_{\sigma}(\mu) < 1$.

It follows that the set of faces at $\infty$ of $\Delta_\infty(\bar f, \mu)$ contains as a subset the faces at $\infty$ of $\Delta_\infty(\bar f)$. The other faces of $\Delta_\infty(\bar f, \mu)$ at $\infty$ contain the vertex $\mu$. These faces are either ${\mu}$ itself or are of the form $\omega = C(\tau,\mu)$ (= the convex closure of $\mu$ and a face $\tau$ at  $\infty$ of $\Delta_\infty(\bar f)$). See Figure \ref{F: Fig1}. If $\omega$ has codimension $r$ in $\Delta_\infty(\bar f, \mu)$ then $\tau$ has codimension $r+1$ in $\Delta_\infty(\bar f)$. We say a face $\tau$ at $\infty$ of $\Delta_\infty(\bar f)$ is \emph{visible from $\mu$} if $C(\tau, \mu)$ is a face at $\infty$ of $\Delta_\infty(\bar f, \mu)$. We denote by $\Gamma$ the set of all codimension two faces $\tau$ at  $\infty$ of $\Delta_\infty(\bar f)$ and by $\Gamma_1$ the subset of codimension two faces $\tau$ at $\infty$ of $\Delta_\infty(\bar f)$ visible from $\mu$. For example, in the hyperkloosterman case, all faces of $\Delta_\infty(\bar f)$ (where  $\bar f = \sum_{i=1}^n x_i$), other than $\Delta(\bar f) $ itself, are visible from $\mu =(-1, \ldots, -1)$ so that $\Gamma_1 = \Gamma$ in this case.  On the other hand, with this same  $\bar f$, if $\mu = (-1,0, \ldots, 0)$ then $\Gamma_1$ consists of the unique codimension two face at $\infty$ of $\Delta_\infty(\bar f)$ spanned by {$ e_2, \ldots, e_n $}  where the $\{e_i\}$ denote the standard basis of $\bold {R}^n$. In particular, the faces at $\infty$ of $\Delta_\infty(\bar f, \mu)$ of codimension 1 are either $\sigma = \Delta(\bar f)$ itself or have the form $C(\tau, \mu)$ for  $\tau \in \Gamma_1$.

Every face $\tau$ of $\Delta_\infty(\bar f)$ of codimension 2 at $\infty$ spans an affine subspace of $\bb R^n$ of codimension 2 which may be described by the simultaneous linear equations $l_\sigma(x) = 1$ and $\phi^{(\tau)}(x) = 0$. Here, $\phi^{(\tau)}(x) = \sum_{i=1}^n b_i^{(\tau)} x_i \in \bb Z[x_1, \ldots, x_n]$ and we may assume $( b_1^{(\tau)}, \ldots, b_n^{(\tau)})$ are relatively prime. The linear subspace in $\bold{R}^n$ with equation $\phi^{(\tau)}(x) = 0$ is spanned by a boundary face of $\Delta_\infty(\bar f)$. The cone over $\tau$,  $C_\infty(\tau) = $ convex closure of $\tau \cup \{0\}$, in fact spans this boundary face.  We may and do assume the signs have been adjusted so that $Cone(\bar f)$ itself is given by the simultaneous inequalities $\phi^{(\tau)}(x) \geq 0$ where $\tau$ ranges over the codimension 2 faces at $\infty$ of $\Delta_\infty(\bar f)$. 

Recall if $\tau$ is any face of $\Delta_\infty(\bar f)$ we denote
\[
\bar f^{(\tau)}(x) := \sum_{v \in Supp(\bar f) \cap \tau} \bar A(v) x^v
\]
and we say $\bar f$ is nondegenerate along $\tau$ if the Laurent polynomials $\{ x_1 \frac{\partial \bar f^{(\tau)}}{\partial x_1}, \ldots, x_n \frac{\partial \bar f^{(\tau)}}{\partial x_n}\}$ have no common zero in $(\overline{\bb F}_q^*)^n$ where $\overline{\bb F}_q$ is an algebraic closure of $\bb F_q$. We say $\bar f$ is nondegenerate with respect to $\Delta_\infty(\bar f)$ if $\bar f$ is nondegenerate with respect to every closed face $\tau$  at $\infty$ of $\Delta_\infty(\bar f)$.

We now assume hypotheses H(i) through H(v) (note: the first four have already been mentioned):
\begin{itemize}
\item[] H(i). $dim \> \Delta_\infty(\bar f) = n$.
\item[] H(ii). $\bar f$ is quasihomogeneous.
\item[] H(iii). $\bar f$ is nondegenerate with respect to $\Delta_\infty(\bar f)$.
\item[] H(iv). $l_{\sigma}(\mu) < 1$. (See Section \ref{S: 3} for the case when $l_{\sigma}(\mu) > 1$.)
\item[] H(v).  $p  \nmid \prod_{\tau \in \Gamma_1} \phi^{(\tau)}(\mu)$, where $\Gamma_1$ is the set of codimension 2 faces at $\infty$ of $\Delta_\infty(\bar f)$ that are visible to $\mu$.
\end{itemize}
The last hypothesis is needed to prove the following.

\begin{theorem}\label{T: nondeg}
Let $\bar \lambda \in \overline{\bb F}_q^*$. Under the hypotheses H(i) through H(v), $\bar F(\bar \lambda, x) = \bar f(x) + \bar \lambda x^\mu$ is nondegenerate with respect to $\Delta_\infty(\bar F(\bar \lambda, x)) (= \Delta_\infty(\bar f, \mu))$. 
\end{theorem}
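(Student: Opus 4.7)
The plan is to verify, face by face, that $\bar F(\bar\lambda, x)$ is nondegenerate along every closed face at $\infty$ of $\Delta_\infty(\bar f, \mu)$. According to the discussion immediately preceding the theorem, these fall into three families: (i) the faces at $\infty$ of $\Delta_\infty(\bar f)$ (which automatically do not contain $\mu$), (ii) the singleton vertex $\{\mu\}$, and (iii) the faces $\omega = C(\tau, \mu)$ with $\tau$ a face at $\infty$ of $\Delta_\infty(\bar f)$ visible from $\mu$.

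The first two cases are quick. For (i), since $\mu \notin \tau$ we have $\bar F^{(\tau)}(\bar\lambda, x) = \bar f^{(\tau)}(x)$, so nondegeneracy is exactly hypothesis H(iii). For (ii), $\bar F^{(\{\mu\})} = \bar\lambda x^\mu$ gives $x_i \, \partial \bar F^{(\{\mu\})}/\partial x_i = \bar\lambda\, \mu_i\, x^\mu$; since $\{\mu\}$ is a vertex at $\infty$ we have $\mu \neq 0$, so some $\mu_i \neq 0$, and these partials do not all vanish on $(\overline{\bb F}_q^*)^n$.

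The substantive case is (iii). I would first choose $\tau_0 \in \Gamma_1$ with $\tau \subseteq \tau_0$: when $\omega$ is a facet this is $\tau_0 = \tau$, and otherwise $\omega$ is contained in some facet of $\Delta_\infty(\bar f, \mu)$ at $\infty$, which must contain $\mu$ and hence be of the form $C(\tau_0, \mu)$ with $\tau_0 \in \Gamma_1$ (the facet $\sigma$ does not contain $\mu$ since $l_\sigma(\mu) < 1$). Writing $\phi^{(\tau_0)}(x) = \sum_i b_i x_i$, and using that $\phi^{(\tau_0)}$ vanishes on the affine span of $\tau_0 \supseteq \tau$, the decisive calculation is
\[
\sum_{i=1}^n b_i \, x_i \frac{\partial \bar F^{(\omega)}}{\partial x_i} \;=\; \sum_{v \in Supp(\bar f) \cap \tau} \bar A(v)\, \phi^{(\tau_0)}(v)\, x^v \; + \; \bar\lambda\, \phi^{(\tau_0)}(\mu)\, x^\mu \;=\; \bar\lambda\, \phi^{(\tau_0)}(\mu)\, x^\mu.
\]
By H(v), $\phi^{(\tau_0)}(\mu)$ is a nonzero element of $\overline{\bb F}_q$ (an integer not divisible by $p$), and $\bar\lambda \neq 0$, so the right-hand side is nowhere vanishing on the torus; hence the logarithmic partials of $\bar F^{(\omega)}$ have no common zero there.

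The step that requires the most care is the existence of the containing face $\tau_0$ used in case (iii), since it relies on the polyhedral description of the codimension-$1$ faces at $\infty$ of $\Delta_\infty(\bar f, \mu)$ already set up in the paper. Once that is granted, H(v) controls precisely those new faces which genuinely involve the monomial $x^\mu$, while H(iii) handles all faces inherited from $\Delta_\infty(\bar f)$.
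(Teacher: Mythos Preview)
Your argument is essentially the paper's: split into faces not containing $\mu$ (handled by H(iii)) and faces containing $\mu$; for the latter, choose $\tau_0\in\Gamma_1$ with $\omega\subseteq C(\tau_0,\mu)$ and apply the operator $\sum_i b_i^{(\tau_0)} x_i\,\partial/\partial x_i$ to isolate $\bar\lambda\,\phi^{(\tau_0)}(\mu)\,x^\mu$, which is nonvanishing on the torus by H(v). The paper does not separate out the vertex $\{\mu\}$ but folds it into this second case (with $\gamma$ empty), so the structure is the same.

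There is one genuine slip in your case (ii). From $\mu\neq 0$ in $\bb Z^n$ you infer that some $\mu_i\neq 0$ and hence that the logarithmic partials $\bar\lambda\,\mu_i\,x^\mu$ are not all identically zero; but you are working in characteristic $p$, so what you need is $\mu_i\not\equiv 0\pmod p$ for some $i$, and $\mu\neq 0$ alone does not guarantee this. The repair is again H(v): since $\mu\notin Cone(\bar f)$ the set $\Gamma_1$ is nonempty, and for any $\tau_0\in\Gamma_1$ one has $\phi^{(\tau_0)}(\mu)=\sum_i b_i^{(\tau_0)}\mu_i\not\equiv 0\pmod p$, forcing some $\mu_i\not\equiv 0\pmod p$. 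Equivalently, simply run your case (iii) computation with $\omega=\{\mu\}$ and any $\tau_0\in\Gamma_1$, which is what the paper does.
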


\begin{proof}
If $\omega$ is a face at $\infty$ of $\Delta_\infty(\bar F(\bar \lambda, x))$ not containing $\mu$ then $\omega$ is in fact a face of $\Delta_\infty(\bar f)$. Then
\[
\bar F^{(\omega)}(\bar \lambda, x) = \bar f^{(\omega)}(x)
\]
and $\bar F$ is nondegenerate along $\omega$ since $\bar f$ is, by hypothesis H(iii). If on the other hand $\mu$ belongs to the face $\omega$, then $\omega$ has the form $C(\gamma, \mu)$ with $\gamma$ a face at $\infty$ of $\Delta_\infty(\bar f)$. In fact $\gamma$ is contained in a face $\tau \in \Gamma_1$  and $\omega = C(\gamma, \mu) \subset C(\tau, \mu)$.  So in this case
\[
\bar F^{(\omega)}(\bar \lambda, x) = \bar f^{(\gamma)}(x) + \bar \lambda x^\mu.
\]
Let $\bar x \in \overline{\bb F}_q^n$ be a simultaneous zero of $\{ x_1 \frac{\partial \bar F^{(\omega)}}{\partial x_1}, \ldots, x_n \frac{\partial \bar F^{(\omega)}}{\partial x_n}\}$. Then $\bar x$ also satisfies
\begin{equation}\label{E: 2}
\left( \sum_{i=1}^n b_i^{(\tau)} x_i \frac{\partial}{\partial x_i} \right) (\bar F^{(\omega)}).
\end{equation}
But since all $v \in Supp(\tau)$ satisfy $\sum_{i=1}^n b_i^{(\tau)} v_i = 0$ it follows that $\sum_{i=1}^n b_i^{(\tau)} x_i \frac{\partial \bar f^{(\gamma)}(x)}{\partial x_i} = 0$  identically (where $\phi^{(\tau)}(x) = \sum_{i=1}^n b_i^{(\tau)} x_i$). As a consequence of this observation and (\ref{E: 2}), we get $\phi^{(\tau)}(\mu) \bar \lambda \bar x^u = 0$ in $\overline{\bb F}_q$. However by hypothesis H(v), $\phi^{(\tau)}(\mu) \not= 0 $ in $\bb F_q$ so that $\bar x_i = 0$ for some $i = 1, \ldots, n$. Hence $\bar F$ is nondegenerate.
\end{proof}

Let $Cone(\bar f)$ and $Cone(\bar f, \mu)$ be the cones in $\bb R^n$ over $\Delta_\infty(\bar f)$ and $\Delta_\infty(\bar f, \mu)$, respectively. Let $M(\bar f) = Cone(\bar f) \cap \bb Z^n$ and $M(\bar f, \mu) = Cone(\bar f, \mu) \cap \bb Z^n$ be the monoids of lattice points in the respective cones. Let $\bb F_q[M(\bar f, \mu)]$ be the monoid algebra of Laurent polynomials with coefficients in $\bb F_q$ and support in $M(\bar f, \mu)$. For $\bar \lambda \in \overline{\bb F}_q^*$, we sometimes will use the notation $\bb F_q^{(\bar \lambda)}$ to denote the field of $\bar \lambda$ over $\bb F_q$; that is, ${\bb F}_q^{(\bar \lambda)}:=\bb F_q(\bar \lambda)$ the field generated over $\bb F_q$ by $\bar \lambda$. Let $d(\bar \lambda) := [\bb F_q^{(\bar \lambda)} : \bb F_q]$ be the degree of $\bar \lambda$ over $\bb F_q$. We use the polyhedron $\Delta_\infty(\bar f, \mu)$ to define (in the usual way) a weight function on $M(\bar f, \mu)$: for $v \in M(\bar f, \mu)$ let $w(v)$ denote the smallest non-negative rational number $b$ such that $v \in b \cdot \Delta_\infty(\bar f, \mu)$. Note that there is a positive integer $d$ such that $w(M(\bar f, \mu)) \subset (1/d)\bb Z_{\geq 0}$. Using the weight function $w$, $R^{(\bar \lambda)} := \bb F_q^{(\bar \lambda)}[M(\bar f, \mu)]$ has an increasing filtration indexed by $(1/d)\bb Z_{\geq 0}$. More precisely, for $i \in (1/d)\bb Z_{\geq 0}$ set
\[
Fil_i(R^{(\bar \lambda)}) := \left\{ \sum_{v \in M(\bar f, \mu)} \bar A(v) x^v \in R^{(\bar \lambda)} \mid \bar A(v) \in \bb F_q^{(\bar \lambda)}, w(v) \leq i \right\}.
\]
Let $\bar R^{(\bar \lambda)}$ denote the associated graded ring. We construct two complexes as follows. The spaces in both cases are the same:
\[
\Omega^i(\bar R^{(\bar \lambda)}, \nabla(\bar F(\bar \lambda, x))) := \Omega^i(\bar R^{(\bar \lambda)}, \nabla(D^{(\bar \lambda)}))
:= \bigoplus_{1 \leq j_1 < j_2 < \cdots  < j_i \leq n} \bar R^{(\bar \lambda)} \frac{d x_{j_1}}{x_{j_1}} \wedge \cdots \wedge \frac{d x_{j_i}}{x_{j_i}}
\]
with the respective boundary operators given by
\[
\nabla(\bar F(\bar \lambda, x))(\xi \frac{d x_{j_1}}{x_{j_1}} \wedge \cdots \wedge \frac{d x_{j_i}}{x_{j_i}}) := \left( \sum_{l=1}^n x_l \frac{\partial \bar F(\bar \lambda, x)}{\partial x_l}\xi \frac{d x_l}{x_l} \right) \wedge \frac{d x_{j_1}}{x_{j_1}} \wedge \cdots \wedge \frac{d x_{j_i}}{x_{j_i}}
\]
in the one case, and
\[
\nabla(D^{(\bar \lambda)})(\xi \frac{d x_{j_1}}{x_{j_1}} \wedge \cdots \wedge \frac{d x_{j_i}}{x_{j_i}}) := \left( \sum_{l=1}^n (D_l^{(\bar \lambda)} \xi )\frac{ dx_l}{x_l} \right)\wedge \frac{d x_{j_1}}{x_{j_1}} \wedge \cdots \wedge \frac{d x_{j_i}}{x_{j_i}}
\]
where
\[
D_l^{(\bar \lambda)} := x_l \frac{\partial}{\partial x_l} + x_l \frac{\partial \bar F(\bar \lambda, x)}{\partial x_l}
\]
in the other case.

It follows then from Theorem \ref{T: nondeg} and \cite{AdolpSperb-ExponentialSumsand-1989}:

\begin{theorem}\label{T: R decomp}
For every choice $\bar \lambda \in  \overline{\bb F}_q^* $, both complexes are acyclic except in top dimension $n$. In both cases, the top dimensional cohomology $H^n$ is a finite free $\bb F_q^{(\bar \lambda)}$-algebra of rank $n! Vol(\Delta_\infty(\bar f, \mu))$. For each $i \in (1/d)\bb Z_{\geq 0}$ we may choose a monomial basis $B_i^{(\bar \lambda)}$ consisting of monomials of weight i for an $\bb F_q^{(\bar \lambda)}$-vector space $V_i^{(\bar \lambda)}$ such that the $i$-th graded piece $\bar R_i^{(\bar \lambda)}$ of $\bar R^{(\bar \lambda)}$ may be written as
\[
\bar R_i^{(\bar \lambda)} = V_i^{(\bar \lambda)} \oplus \sum_{l=1}^n x_l \frac{\partial \bar F(\bar \lambda, x)}{\partial x_l} \bar R_{i-1}^{(\bar \lambda)}
\]
so that if $B^{(\bar \lambda)} = \bigcup_{i \in (1/d)\bb Z_{\geq 0}} B_i^{(\bar \lambda)}$ and $V^{(\bar \lambda)} = \sum_{i \in (1/d)\bb Z_{\geq 0}} V_i^{(\bar \lambda)}$ is the  $\bb F_q^{(\bar \lambda)}$-vector space with basis $B^{(\bar \lambda)}$, then 
\[
\bar R^{(\bar \lambda)} = V^{(\bar \lambda)} \oplus \sum_{l=1}^n x_l \frac{\partial \bar F(\bar \lambda, x)}{\partial x_l} \bar R^{(\bar \lambda)}.
\]
It follows as well that
\[
\bar R^{(\bar \lambda)} = V^{(\bar \lambda)} \oplus \sum_{l=1}^n D_l^{(\bar \lambda)} \bar R^{(\bar \lambda)}.
\]
\end{theorem}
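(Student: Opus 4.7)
The plan is to deduce the statement from the main acyclicity and dimension theorem of Adolphson--Sperber \cite{AdolpSperb-ExponentialSumsand-1989} once the nondegeneracy input provided by Theorem~\ref{T: nondeg} is in place. Concretely, for $\bar\lambda \in \overline{\bb F}_q^*$ the monomial $\bar\lambda x^\mu$ genuinely occurs in $\bar F(\bar\lambda,x)$, so the Newton polytope at infinity of $\bar F(\bar\lambda,x)$ coincides with $\Delta_\infty(\bar f,\mu)$; Theorem~\ref{T: nondeg} provides nondegeneracy of $\bar F(\bar\lambda,x)$ with respect to this polytope, which is precisely the input required by \cite{AdolpSperb-ExponentialSumsand-1989} to conclude that the Koszul-type complex $\Omega^\bullet(\bar R^{(\bar\lambda)},\nabla(\bar F(\bar\lambda,x)))$ built from the logarithmic derivatives $\{x_l \partial \bar F/\partial x_l\}_{l=1}^n$ is acyclic in positive codimension, with top cohomology a finite free $\bb F_q^{(\bar\lambda)}$-module of rank $n!\,Vol(\Delta_\infty(\bar f,\mu))$ (the Kouchnirenko volume formula).

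Next I would construct the monomial basis and the graded direct-sum decomposition weight by weight. The operator $\xi \mapsto x_l \partial \bar F/\partial x_l \cdot \xi$ decomposes on $\bar R^{(\bar\lambda)}$ into pieces of weight shift $\leq 1$, so at each weight $i\in (1/d)\bb Z_{\geq 0}$ the subspace $\sum_l x_l\partial \bar F/\partial x_l \cdot \bar R_{i-1}^{(\bar\lambda)}$ sits inside the finite-dimensional $\bb F_q^{(\bar\lambda)}$-space $\bar R_i^{(\bar\lambda)}$. Using the graded acyclicity and finiteness from Adolphson--Sperber, I would inductively pick a monomial complement $V_i^{(\bar\lambda)}$ with monomial basis $B_i^{(\bar\lambda)}$; summing over $i$ yields the global basis $B^{(\bar\lambda)}$, the complement $V^{(\bar\lambda)}$, and the decomposition $\bar R^{(\bar\lambda)} = V^{(\bar\lambda)} \oplus \sum_l x_l \partial \bar F/\partial x_l\cdot \bar R^{(\bar\lambda)}$ with $\dim V^{(\bar\lambda)} = n!\,Vol(\Delta_\infty(\bar f,\mu))$.

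Finally, for the complex with boundary $D_l^{(\bar\lambda)} = x_l\partial/\partial x_l + x_l\partial \bar F/\partial x_l$, I would exploit that $x_l\partial/\partial x_l$ acts on each monomial $x^v \in \bar R^{(\bar\lambda)}$ by the scalar $v_l$, so it preserves every graded piece while the multiplicative term strictly raises weight. The weight-filtered principal symbol of $D_l^{(\bar\lambda)}$ is therefore precisely multiplication by $x_l \partial \bar F/\partial x_l$, and a standard filtered-Nakayama descent lifts the already-established decomposition for $\nabla(\bar F)$ to one for $\nabla(D^{(\bar\lambda)})$, yielding $\bar R^{(\bar\lambda)} = V^{(\bar\lambda)} \oplus \sum_l D_l^{(\bar\lambda)} \bar R^{(\bar\lambda)}$ with the same complement $V^{(\bar\lambda)}$.

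The main obstacle, beyond invoking the Adolphson--Sperber machinery cleanly, is the weight-filtration bookkeeping in the final comparison: one must argue that the lower-weight summand $x_l\partial/\partial x_l$ inside $D_l^{(\bar\lambda)}$ does not enlarge the image module, which is most cleanly handled by an inductive descent on weight starting from the top-weight graded piece — the place where the Adolphson--Sperber theorem directly controls the image.
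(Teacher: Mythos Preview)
Your proposal is correct and matches the paper's approach exactly: the paper simply states that the theorem ``follows then from Theorem~\ref{T: nondeg} and \cite{AdolpSperb-ExponentialSumsand-1989}'' with no further argument, and your write-up is a faithful elaboration of precisely that citation---nondegeneracy from Theorem~\ref{T: nondeg} feeds into the Adolphson--Sperber Koszul acyclicity/volume theorem, and the passage from $\nabla(\bar F)$ to $\nabla(D^{(\bar\lambda)})$ is the standard filtered/associated-graded comparison (indeed the paper notes immediately after the statement that $\bar F(\bar\lambda,x)$ is homogeneous of weight~$1$ while the $D_l^{(\bar\lambda)}$ are not, which is exactly the setup for your principal-symbol argument). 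One small sharpening: since $\bar F(\bar\lambda,x)$ is genuinely homogeneous of weight~$1$, multiplication by $x_l\,\partial\bar F/\partial x_l$ raises weight by exactly~$1$, not merely~$\leq 1$.
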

(Note that $\bar F(\bar \lambda, x)$ is homogeneous of weight 1,  but  the $D_l^{(\bar \lambda)}$ are not homogeneous operators.) We will see in Theorem \ref{T: 3.3}  that $B_i^{(\bar \lambda)}$ and $B^{(\bar \lambda)}$ may be chosen independently of $\bar \lambda \in \overline{\bb F}_q^*$. 

It is useful to express explicitly the weight function $w$ on the various chambers or closed subcones of $Cone(\bar f, \mu)$ corresponding to the codimension one faces $\omega$ of $\Delta_\infty(\bar f, \mu)$ at $\infty$. When $\omega = \sigma = \Delta(\bar f)$ and $v \in Cone(\bar f) \cap \bb Z^n = M(\bar f)$, then $w(v) = l_\sigma(v)$. If $\omega = C(\tau, \mu)$ and $\tau$ is a face of codimension 2 at $\infty$ of $\Delta_\infty(\bar f)$, then the affine hyperplane spanned by $C(\tau, \mu)$ in $\bb R^n$ has equation $l_{(\tau, \mu)}(x) = 1$ where 
\begin{equation}\label{E: 3}
l_{(\tau, \mu)}(x) = \frac{\phi^{(\tau)}(x)}{\phi^{(\tau)}(\mu)}(1 - l_\sigma(\mu)) + l_\sigma(x),
\end{equation}
a rational linear form in $x_1, \ldots, x_n$. See Figure \ref{F: Fig2}.
\begin{figure}
\caption{The hyperplane $l_{(\tau, \mu)}(x) = 1$}\label{F: Fig2}
  \centering
\includegraphics[width=2in]{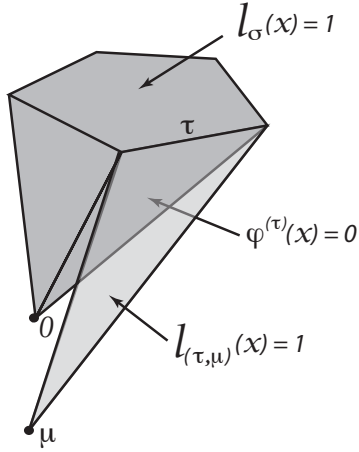}
\end{figure}
(Note, hypothesis H(v) precludes the possibility that $\mu$ belongs to any boundary face $\tau$ with equation $\phi^{(\tau)}(x) = 0$ of $Cone(\bar f)$ for which $\tau$ is visible from $\mu$.) \pagebreak If $Cone(\tau, \mu)$ is the closed cone of rays from the origin passing through $C(\tau, \mu)$, and $v \in Cone(\tau, \mu) \cap \bb Z^n$ ($=: M(\tau, \mu)$) then $w(v) = l_{(\tau, \mu)}(v)$.

\section{The extended monoid $\tilde M(\bar F)$}\label{S: monoid}

To compute relative cohomology we view $\bar F(\Lambda, x)$ as a Laurent polynomial in $n+1$ variables. To this end, we define $\Delta_\infty(\bar F) :=$ convex closure in $\bb R^{n+1}$ of $\{0\} \cup Supp(\bar F)$. It is convenient to fix the ordering of coordinates in $\bb R^{n+1}$ in the order $(\Lambda; x_1, \ldots, x_n)$ so that
\[
Supp(\bar F) = \{(1; \mu)\} \cup  \{ (0; v) \mid v \in Supp(\bar f))\} .
\]
Let $Cone(\bar F)$ be the cone in $\bb R^{n+1}$ over $\Delta_\infty(\bar F)$, and $M(\bar F) := Cone(\bar F) \cap \bb Z^{n+1}$, the monoid of lattice points in $Cone(\bar F)$. $\bar F$ is quasihomogeneous in $\bb R^{n+1}$ with all elements of $Supp(\bar F)$ lying on the affine hyperplane $W(\Lambda, x) = 1$, where
\begin{equation}\label{E: 4}
W(\Lambda, x) := l_\sigma(x) + \Lambda(1 - l_\sigma(\mu)).
\end{equation}
Note that $\Lambda$ itself, which is the point $(1; 0, \ldots, 0) \in M(\bar F)$, belongs to $Cone(\bar F)$ in certain cases (in fact, if and only if $l_\sigma(\mu) < 0$ and the line joining 0 and $\mu$ passes through the interior of $Cone(\bar f)$). Since we wish to proceed more generally, and since we will want to work over the polynomial ring in $\Lambda$ (or $\Lambda^{1/D}$ for a suitable positive integer $D$) we define a larger cone which contains the positive $\Lambda$-axis. The hyperplane $W(\Lambda, x) = 1$ meets the positive $\Lambda$-coordinate axis at the point $P_0 = ((1 - l_\sigma(\mu))^{-1}, 0, \ldots, 0)$. See Figure \ref{F: Fig5}.
\begin{figure}
\caption{The hyperplane $W(\Lambda, x)= 1$ and the point $P_0$}\label{F: Fig5}
  \centering
\includegraphics[width=5in]{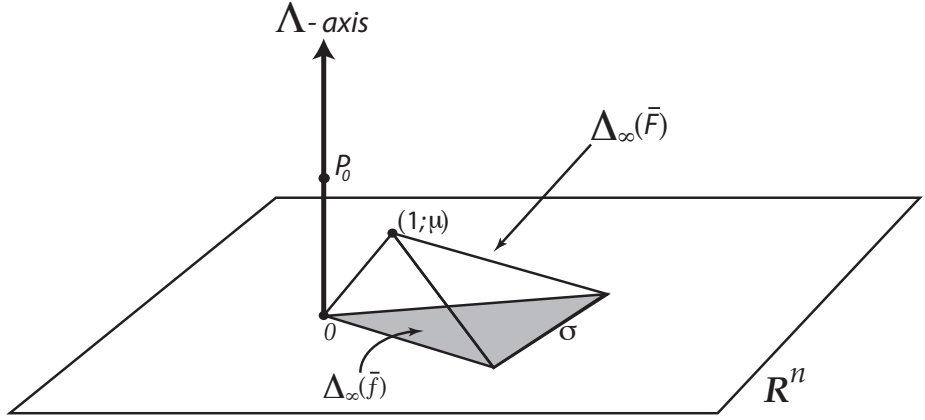}
\end{figure}
Set $\Delta_\infty(\bar F, P_0) := $ convex closure of $\Delta_\infty(\bar F) \cup \{P_0\}$ and $Cone(\bar F, P_0) := $ cone in $\bb R^{n+1}$ over $\Delta_\infty(\bar F, P_0)$. For later purposes, we define the extended monoid $\tilde M(\bar F) = Cone(\bar F, P_0) \cap \tilde L$, where $\tilde L$ is the extended lattice $((1/D) \bb Z) \times \bb Z^n$, where $D$ is the least common multiple of the set of positive integers $\{ \phi^{(\tau)}(\mu) \}_{\tau \in \Gamma_1}$,  (recall $\Gamma_1$ is the collection of codimension 2 faces of $\Delta_\infty(\bar f)$ at $\infty$ which are visible from $\mu$). $W(\Lambda, x)$ defines a weight function on $\tilde M(\bar F)$, and let $e$ be the least positive integer, divisible by $D$, such that $W(\tilde M(\bar F)) \subset(1/e)\bb Z_{\geq 0}$. 

The following definition plays an important role in our analysis.
\begin{equation}\label{E: 40}
m(v) := 
\begin{cases}
\phi^{(\tau)}(v)  / \phi^{(\tau)}(\mu) & \text{for } v \in Cone(\tau, \mu) \text{ and }  \tau \in \Gamma_1, \\
0 & \text{for }  v \in Cone(\bar f).
\end{cases}
\end{equation}
Note that $m(v) \geq 0$, since for $\tau \in \Gamma_1$ and $v \in Cone(\tau, \mu)$, $\phi^{(\tau)}(v) \leq 0$  and  $\phi^{(\tau)}(\mu)$ is negative. 

\begin{lemma}
$\tilde M(\bar F)$ may be described explicitly by:
\begin{equation}\label{E: extended monoid}
\tilde M(\bar F) = \{ (r, v) \in (1/D)\bb Z_{\geq 0} \times \bb Z^n | v \in Cone(\bar f, \mu), r \geq m(v) \}.
\end{equation}
\end{lemma}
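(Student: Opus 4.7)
The plan is to establish both inclusions directly from the description of $Cone(\bar F, P_0)$ as the non-negative real span of its generating rays: $P_0$, $(1;\mu)$, and $(0;v)$ for $v$ ranging over the vertices of $\sigma = \Delta(\bar f)$ (equivalently, over $Supp(\bar f)$).

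For the inclusion $\subseteq$, I would exhibit linear functionals on $\bb R^{n+1}$ that are non-negative on all these generating rays and so non-negative on the whole cone. The coordinate $\Lambda$ and the projection onto the last $n$ coordinates immediately give $r \geq 0$ and $v \in Cone(\bar f, \mu)$. The crucial functionals are, for each $\tau \in \Gamma_1$,
\[
\Phi^{(\tau)}(\Lambda, x) := \phi^{(\tau)}(x) - \Lambda\,\phi^{(\tau)}(\mu).
\]
One checks that $\Phi^{(\tau)}$ vanishes at $(1;\mu)$, takes the positive value $-(1-l_\sigma(\mu))^{-1}\phi^{(\tau)}(\mu)$ at $P_0$ (using $l_\sigma(\mu) < 1$ and $\phi^{(\tau)}(\mu) < 0$ for $\tau \in \Gamma_1$), and equals $\phi^{(\tau)}(v) \geq 0$ on each $(0;v)$ since the vertices of $\sigma$ lie in $Cone(\bar f)$. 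So $\Phi^{(\tau)} \geq 0$ on $Cone(\bar F, P_0)$, giving $\phi^{(\tau)}(v) - r\phi^{(\tau)}(\mu) \geq 0$; dividing by the negative quantity $\phi^{(\tau)}(\mu)$ reverses the inequality to $r \geq \phi^{(\tau)}(v)/\phi^{(\tau)}(\mu)$. Choosing $\tau$ to be the one whose chamber contains $v$ then gives $r \geq m(v)$, while for $v \in Cone(\bar f)$ the bound $m(v) = 0 \leq r$ comes free from $\Lambda \geq 0$.

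For the reverse inclusion, I would produce an explicit non-negative decomposition in each chamber of $Cone(\bar f, \mu)$. If $v \in Cone(\bar f)$, write $v = \sum_i \alpha_i v_i$ with $\alpha_i \geq 0$ and $v_i$ vertices of $\sigma$, and set
\[
(r, v) = r(1-l_\sigma(\mu))\,P_0 + \sum_i \alpha_i\,(0;v_i).
\]
If $v \in Cone(\tau, \mu) \setminus Cone(\bar f)$ for some $\tau \in \Gamma_1$, then any expression $v = a\mu + w$ with $w$ in the cone over $\tau$ gives $a = \phi^{(\tau)}(v)/\phi^{(\tau)}(\mu) = m(v)$ upon applying $\phi^{(\tau)}$, and
\[
(r, v) = m(v)\,(1;\mu) + (0;w) + (r - m(v))(1-l_\sigma(\mu))\,P_0
\]
is a non-negative combination of generators by virtue of $r \geq m(v)$. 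The choice $D = \mathrm{lcm}\{|\phi^{(\tau)}(\mu)| : \tau \in \Gamma_1\}$ ensures $m(v) \in (1/D)\bb Z$, so that the lattice constraint $(r, v) \in \tilde L$ remains consistent with $r - m(v) \in (1/D)\bb Z_{\geq 0}$.

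The subtlest point is checking that the piecewise definition of $m(v)$ is unambiguous where the chambers overlap. If $v$ lies in both $Cone(\tau_1, \mu)$ and $Cone(\tau_2, \mu)$, then $v$ can be written as $a\mu + w$ with $w$ in the cone over $\tau_1 \cap \tau_2$; since $\phi^{(\tau_i)}$ vanishes on this smaller cone, $a = \phi^{(\tau_i)}(v)/\phi^{(\tau_i)}(\mu)$ for both $i$, so the two candidate values of $m(v)$ coincide. An analogous check reconciles $m(v) = 0$ with the $\Gamma_1$ formula on $Cone(\bar f) \cap Cone(\tau, \mu) = Cone(C_\infty(\tau))$. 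Once this coherence is in place, the two inclusions dovetail and the claimed description of $\tilde M(\bar F)$ follows.
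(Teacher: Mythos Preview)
Your argument is correct and rests on the same linear functionals $\Phi^{(\tau)}(\Lambda,x)=\phi^{(\tau)}(x)-\Lambda\,\phi^{(\tau)}(\mu)$ that the paper uses to cut out $Cone(\bar F,P_0)$, so the core idea coincides. The presentations differ in emphasis: the paper works geometrically, first proving the equivalence ``$\tau$ visible from $\mu$ $\Longleftrightarrow$ $\phi^{(\tau)}(\mu)<0$'' and then reading off the facet hyperplanes of $Cone(\bar F)$ and $Cone(\bar F,P_0)$ to obtain the simultaneous inequalities, whence the preimage description over each chamber falls out. You instead verify both inclusions by hand---evaluating the $\Phi^{(\tau)}$ on the extremal rays for $\subseteq$, and exhibiting explicit non-negative combinations of $P_0$, $(1;\mu)$, and the $(0;v_i)$ for $\supseteq$---and you add a coherence check that $m(v)$ is well defined on overlapping chambers, a point the paper leaves implicit. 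The one item you invoke without proof is $\phi^{(\tau)}(\mu)<0$ for $\tau\in\Gamma_1$; the paper supplies this as an intermediate claim, so you may wish to cite or reproduce that short argument.
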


\begin{proof}
Consider the projection $pr: \bb R^{n+1} \rightarrow \bb R^n$ onto the last $n$ coordinates. Then $pr( Cone(\bar F, P_0)) = Cone(\bar f, \mu)$. Let $\tau$ be any face of $\Delta_\infty(\bar f)$ at $\infty$ of codimension 2. Together with the origin, $\tau$ spans a hyperplane $H(\tau)$ with equation $\phi^{(\tau)}(x) = 0$ of dimension $n-1$ in $\bb R^n$. We view $\bb R^n$ and $H(\tau)$ in $\bb R^{n+1}$ via the embedding $\iota$ which sends $\bb R^n$ to the hyperplane $\Lambda = 0$ in $\bb R^{n+1}$ by the map
\[
\iota: (x_1, \ldots ,x_n) \mapsto  (0; x_1, \ldots, x_n)
\]
The hyperplane $H_{(\tau, \mu)}$ in $\bb R^{n+1}$ with equation $\Lambda \phi^{(\tau)}(\mu) - \phi^{(\tau)}(x) = 0$ is a hyperplane in $\bb R^{n+1}$, spanned by $\iota(H(\tau)) \cup \{ (1; \mu)\}$.

Claim:  $\tau$ is visible from $\mu$ if and only if $\phi^{(\tau)}(\mu) < 0$. To prove this, suppose first that $C(\tau, \mu)$ is a face of $\Delta_\infty(\bar f, \mu)$.  Then $\tau$ is a common face of $C(\tau, \mu)$ and $\sigma = \Delta(\bar f)$ so these codimension one faces at $\infty$ of $\Delta_\infty(\bar f, \mu)$ are on opposite sides of the hyperplane $H(\tau)$ with equation $\phi^{(\tau)}(x) = 0$. Since the signs are normalized so that elements of $Cone(\bar f)$ satisfy $\phi^{(\tau)}(x) \geq 0$ for all $\tau \in \Gamma$, it follows that $\phi^{(\tau)}(\mu) < 0$. Conversely, if $\phi^{(\tau)}(\mu) < 0$, consider 
\[
l_{(\tau, \mu)}(x) = l_\sigma(x) + (1 - l_\sigma(\mu)) \frac{\phi^{(\tau)}(x)}{\phi^{(\tau)}(\mu)}.
\]
For $x_0 \in \Delta_\infty(\bar f)$, $l_\sigma(x_0) \leq 1$ and $\phi^{(\tau)}(x_0) \geq 0$ so that $l_{(\tau, \mu)}(x_0) \leq 1$ (and the inequality is strict if $x_0 \not\in \tau$). So $C(\tau, \mu)$ is a face of codimension 1 at $\infty$ for $\Delta_\infty(\bar f, \mu)$.  This finishes the proof of the claim.

From this, it follows that the boundary faces for $Cone(\bar F)$ are given by $\Lambda = 0$, and $\Lambda \phi^{(\tau)}(\mu) - \phi^{(\tau)}(x) = 0$, and $Cone(\bar F)$ is described by the simultaneous inequalities 
\[
\begin{cases}
\Lambda \geq 0;  &  \\
\Lambda \geq \frac{\phi^{(\tau)}(x)}{\phi^{(\tau)}(\mu)} & \text{for $\tau$ visible from $\mu$, $(\tau \in \Gamma_1)$}; \\
\frac{\phi^{(\tau)}(x)}{\phi^{(\tau)}(\mu)} \geq \Lambda & \text{for $\tau \in \Gamma - \Gamma_1$}.
\end{cases}
\]

On the other hand the cone $Cone(\bar F, P_0)$ is described by the simultaneous inequalities
\[
\begin{cases}
\Lambda \geq 0; &  \\
\Lambda \geq \frac{\phi^{(\tau)}(x)}{\phi^{(\tau)}(\mu)} & \text{for $\tau$ visible from $\mu$, $(\tau \in \Gamma_1)$}; \\
\phi^{(\tau)}(x) \geq 0 & \text{for all } \tau \in \Gamma - \Gamma_1.
\end{cases}
\]
In particular,  the projection map $pr$ maps  $ Cone(\bar F, P_0)$ onto $Cone(\bar f, \mu)$ and
\[
pr^{-1}(Cone(\tau, \mu)) = \{ (\Lambda, x) \mid x \in Cone(\tau, \mu), \Lambda \geq \phi^{(\tau)}(x)  / \phi^{(\tau)}(\mu)\}
\]
which finishes the proof by (\ref{E: 40}).
\end{proof}

\section{Total space}\label{S: 2}

Using notation from the previous section, let $T := \bb F_q[\tilde M(\bar F)]$ be the graded $\bb F_q$-algebra with its grading given by $W$ and indexed by $(1/e) \bb Z_{\geq 0}$. In fact, it is an $(S := \bb F_q[\Lambda^{1/D}])$-algebra.  $S$ in fact is contained in T and is itself a graded $\bb F_q$-algebra via the restriction of $W$ to $S$. The gradings satisfy $S_i T_j \subset T_{i+j}$ where $i, j \in (1/e)\bb Z_{\geq 0}$. From (\ref{E: extended monoid}), it follows that $T = \bb F_q[\tilde M(\bar F)]$ is a free $S$-algebra with basis $\{ \Lambda^{m(v)} x^v\}_{v \in M(\bar f, \mu)}$. Furthermore, the gradings on $T$ (and $S$)  are  given by
\[
W(r, v) := l_\sigma(v) + r(1 - l_\sigma(\mu))
\]
for $\Lambda^r x^v \in {\tilde M(\bar F)}$. Finally, note that
\begin{align}
W(m(v), v) &= l_\sigma(v) + \frac{\phi^{(\tau)}(v)}{\phi^{(\tau)}(\mu)}(1 - l_\sigma(\mu)) \notag \\
&=
\begin{cases}\label{E: Wt}
l_{(\tau, \mu)}(v) = w(v) & \text{for $v \in Cone(\tau, \mu)$} \\
l_\sigma(v) = w(v) & \text{for $v \in Cone(\bar f)$, since $m(v) = 0$ in this case.}
\end{cases}
\end{align}
Let $\f m = (\Lambda^{1/D})$ be the maximal ideal in $S$. It is a homogeneous ideal in $S$ and $\f m T$ is the homogeneous ideal it generates in $T$. The following is basic in what follows.

\begin{theorem}\label{T: 3.1}
$\bar T := T/\f mT$ is an $\bar S := S / \f m (= \bb F_q)$ algebra with basis $\{ \Lambda^{m(v)} x^v \mid v \in M(\bar f, \mu) \}$. Further, $ \bar T$ is a graded $ \bb F_q$-algebra  with grading defined by
\[
\bar W( {[m(v); v]}) := W(m(v); v)
\]
where $[m(v);v]$ on the left denotes the class of $\Lambda^{m(v)} x^v$ in the quotient $T/\f mT$. For $\bar \lambda \in \overline{\bb F}_q^*$, the $\bb F_q^{(\bar \lambda)}$-linear map
\[
pr_{\bar \lambda}: T \otimes_{\bb F_q} \bb F_q^{(\bar \lambda)} \rightarrow \bar R^{(\bar \lambda)}
\]
via
\begin{align*}
pr_{\bar \lambda}(\Lambda^r x^v) &= (\Lambda^{r-m(v)} |_{\Lambda = 0}) \bar \lambda^{m(v)} x^v \\
&=
\begin{cases}
\bar \lambda^{m(v)} x^v & \text{if } r = m(v) \\
0 & \text{otherwise}
\end{cases}
\end{align*}
induces an isomorphism of graded $\bb F_q^{\bar \lambda}$-algebras between $\bar T \otimes_{\bb F_q} \bb F_q^{(\bar \lambda)}$ and $\bar R^{(\bar \lambda)}$.
\end{theorem}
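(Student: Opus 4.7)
The plan is to deduce the module and graded algebra structure of $\bar T$ directly from the free $S$-module description of $T$ given just before the theorem, and then to identify the multiplication in $\bar T$ with the multiplication in $\bar R^{(\bar\lambda)}$ via the identity $W(m(v),v)=w(v)$ from (\ref{E: Wt}). Since $T$ is a free $S$-module with basis $\{\Lambda^{m(v)}x^v\}_{v\in M(\bar f,\mu)}$, the quotient $\bar T=T\otimes_S(S/\f m)$ is a free $\bar S=\bb F_q$-module with the same basis (viewed as classes modulo $\f mT$). Because $\f m=(\Lambda^{1/D})$ is generated by a $W$-homogeneous element of positive weight $(1/D)(1-l_\sigma(\mu))>0$, the ideal $\f mT$ is homogeneous and $\bar T$ inherits a grading from $T$; by (\ref{E: Wt}) the basis class $[\Lambda^{m(v)}x^v]$ has weight $\bar W=W(m(v),v)=w(v)$, as stated.

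Next I would verify that $pr_{\bar\lambda}$ annihilates $\f mT$. With respect to the $\bb F_q$-basis $\{\Lambda^r x^v:(r,v)\in\tilde M(\bar F)\}$ of $T$, the ideal $\f mT$ is spanned by the monomials with $r>m(v)$, and on these $pr_{\bar\lambda}$ returns $0$ by definition. Hence $pr_{\bar\lambda}$ descends to an $\bb F_q^{(\bar\lambda)}$-linear map
\[
\overline{pr}_{\bar\lambda}:\bar T\otimes_{\bb F_q}\bb F_q^{(\bar\lambda)}\longrightarrow\bar R^{(\bar\lambda)},\qquad [\Lambda^{m(v)}x^v]\otimes 1\mapsto \bar\lambda^{m(v)}x^v.
\]
Since $\bar\lambda\in\overline{\bb F}_q^*$, each scalar $\bar\lambda^{m(v)}$ is a unit, so the image is just a rescaling of the monomial basis of $\bar R^{(\bar\lambda)}=\bb F_q^{(\bar\lambda)}[M(\bar f,\mu)]$, and $\overline{pr}_{\bar\lambda}$ is an $\bb F_q^{(\bar\lambda)}$-linear bijection carrying basis to basis.

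The remaining and essential step is that $\overline{pr}_{\bar\lambda}$ respects multiplication and grading. In $\bar T$ the product of two basis classes is $[\Lambda^{m(v_1)+m(v_2)}x^{v_1+v_2}]$, which equals $[\Lambda^{m(v_1+v_2)}x^{v_1+v_2}]$ when $m(v_1)+m(v_2)=m(v_1+v_2)$ and otherwise vanishes, since the excess factor $\Lambda^{m(v_1)+m(v_2)-m(v_1+v_2)}$ lies in $\f m$. In the associated graded $\bar R^{(\bar\lambda)}$, the product of the classes of $x^{v_1}$ and $x^{v_2}$ equals $x^{v_1+v_2}$ when $w(v_1)+w(v_2)=w(v_1+v_2)$ and vanishes otherwise. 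The main obstacle is to show these two vanishing conditions coincide; using (\ref{E: Wt}) and the linearity of $l_\sigma$ one finds
\[
w(v_1)+w(v_2)-w(v_1+v_2)=(1-l_\sigma(\mu))\bigl(m(v_1)+m(v_2)-m(v_1+v_2)\bigr),
\]
and since $1-l_\sigma(\mu)>0$, equality on one side is equivalent to equality on the other. When equality holds, both products map to $\bar\lambda^{m(v_1+v_2)}x^{v_1+v_2}$; when it fails, both products vanish. Grading preservation is immediate from the identification $\bar W=w$, which completes the isomorphism of graded $\bb F_q^{(\bar\lambda)}$-algebras.
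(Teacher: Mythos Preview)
Your proposal is correct and follows essentially the same route as the paper: identify the kernel of $pr_{\bar\lambda}$ with $\f m T\otimes_{\bb F_q}\bb F_q^{(\bar\lambda)}$, and then verify multiplicativity via the identity $w(v)=l_\sigma(v)+m(v)(1-l_\sigma(\mu))$, which yields exactly the relation $w(v_1)+w(v_2)-w(v_1+v_2)=(1-l_\sigma(\mu))(m(v_1)+m(v_2)-m(v_1+v_2))$ tying the cofaciality condition in $\bar R^{(\bar\lambda)}$ to the vanishing condition in $\bar T$. The paper's proof is terser (it simply asserts surjectivity and kernel, then records the nonnegativity of $m(v_1)+m(v_2)-m(v_1+v_2)$ with equality iff $v_1,v_2$ are cofacial), but the substance is the same.
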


\begin{proof}
This map is clearly onto, with kernel equal to $\f m T \otimes_{\bb F_q} \bb F_q^{(\bar \lambda)}$. So it defines on the quotient $\bar T \otimes_{\bb F_q} \bb F_q^{(\bar \lambda)}$ an isomorphism which respects the gradings. One may elaborate the multiplication in $\bar T$ by remarking that
\begin{equation}\label{E: 41}
w(v) = W(m(v); v) = l_\sigma(v) + m(v)(1 - l_\sigma(\mu))
\end{equation}
for $v \in M(\bar f, \mu)$. So writing
\[
(\Lambda^{m(v_1)} x^{v_1})(\Lambda^{m(v_2)} x^{v_2}) = \Lambda^{m(v_1) + m(v_2) - m(v_1 + v_2)} \Lambda^{m(v_1 + v_2)} x^{v_1 + v_2}
\]
we infer from the well-known properties of $w$ on $M(\bar f, \mu)$ that
\[
m(v_1) + m(v_2) - m(v_1 + v_2) \geq 0
\]
with equality holding if and only if $v_1$ and $v_2$ are cofacial in $M(\bar f, \mu)$. 
\end{proof}

We construct two complexes of $S$-algebras $\Omega^\bullet(T, \nabla(\bar F))$ and $\Omega^\bullet(T, \nabla(D))$ analogous to the complexes of $\bb F_q^{(\bar \lambda)}$-algebras defined in Section \ref{S: 1}. The spaces in both complexes are the same, namely
\[
\Omega^i := \bigoplus_{1 \leq j_1 < \cdots < j_i \leq n} T \frac{d x_{j_1}}{x_{j_1}} \wedge \cdots \wedge \frac{dx_{j_i}}{x_{j_i}},
\]
with boundary operators defined by
\[
\nabla(\bar F)(\xi  \frac{d x_{j_1}}{x_{j_1}} \wedge \cdots \wedge \frac{dx_{j_i}}{x_{j_i}}) := \left(\sum_{l=1}^n x_l \frac{\partial \bar F(\Lambda, x)}{\partial x_l} \xi \frac{dx_l}{x_l} \right)  \wedge  \frac{d x_{j_1}}{x_{j_1}} \wedge \cdots \wedge \frac{dx_{j_i}}{x_{j_i}}
\]
and
\[
\nabla(D)(\xi  \frac{d x_{j_1}}{x_{j_1}} \wedge \cdots \wedge \frac{dx_{j_i}}{x_{j_i}} ) :=  \left(\sum_{l=1}^n D_l(\Lambda, x) \xi  \frac{dx_l}{x_l} \right) \wedge  \frac{d x_{j_1}}{x_{j_1}} \wedge \cdots \wedge \frac{dx_{j_i}}{x_{j_i}},
\]
where
\[
D_l(\Lambda, x) = x_l \frac{\partial}{\partial x_l} + x_l \frac{\partial \bar F(\Lambda, x)}{\partial x_l}.
\]

\begin{theorem}\label{T: 3.2}
The complexes $\Omega^\bullet(T, \nabla(\bar F))$ and $\Omega^\bullet(T, \nabla(D))$ of $S$-algebras are acyclic except in dimension $n$. $H^n(\Omega^\bullet(T, \nabla(\bar F)))$ is a free, graded $S$-algebra of finite rank $n! Vol \> \Delta_\infty(\bar f, \mu)$. $H^n(\Omega^\bullet(T, \nabla(D)))$ is a free filtered $S$-algebra of finite rank $n! Vol \Delta_\infty(\bar f, \mu)$. If we fix $\bar \lambda \in \bb F_q^*$ (for simplicity) and set $\bar B := \{ \Lambda^{m(v)} x^v \mid v \in B^{(\bar \lambda)} \}$ and $\tilde V$ the free $S$-module with basis $\bar B$ then
\[
T = \tilde V \oplus \sum_{i=1}^n x_i \frac{\partial \bar F(\Lambda, x)}{\partial x_i} T
\]
and
\[
T = \tilde V \oplus \sum_{i=1}^n D_i T.
\]
\end{theorem}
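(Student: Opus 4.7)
The plan is to reduce everything to the fiber result Theorem~\ref{T: R decomp} through the identification $\bar T \otimes_{\bb F_q} \bb F_q^{(\bar\lambda)} \cong \bar R^{(\bar\lambda)}$ from Theorem~\ref{T: 3.1}, then lift from the mod-$\f m$ picture to $T$ by a graded/filtered Nakayama argument, and finally move from the $\nabla(\bar F)$-case to the $\nabla(D)$-case by observing that the $W$-weight associated graded of $\nabla(D)$ is exactly $\nabla(\bar F)$.

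First I would handle $\nabla(\bar F)$. Fix any $\bar\lambda \in \bb F_q^*$ (e.g.\ $\bar\lambda = 1$). Theorem~\ref{T: 3.1} carries multiplication by $x_l \partial\bar F(\Lambda,x)/\partial x_l$ on $\bar T$ to multiplication by $x_l \partial\bar F(\bar\lambda,x)/\partial x_l$ on $\bar R^{(\bar\lambda)}$, so Theorem~\ref{T: R decomp} immediately supplies the graded decomposition $\bar T = \bar V \oplus \sum_l x_l \partial\bar F/\partial x_l \cdot \bar T$ (with $\bar V$ the $\bb F_q$-span of $\bar B$) together with acyclicity of $\Omega^\bullet(\bar T,\nabla(\bar F))$ below dimension $n$. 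To lift to $T$, set $N := \sum_l x_l \partial\bar F/\partial x_l \cdot T$; graded Nakayama applied to the graded $S$-module $T/(\tilde V + N)$ (whose weights are bounded below and whose reduction mod $\f m$ vanishes) gives $T = \tilde V + N$. Directness $\tilde V \cap N = 0$ then follows from the exact sequence $0 \to \tilde V \cap N \to \tilde V \oplus N \to T \to 0$ together with $\mathrm{Tor}_1^S(T,\bb F_q) = 0$ (since $T$ is $S$-free) and the directness mod $\f m$, using that $\tilde V$ is $S$-free on $\bar B$. Acyclicity of $\Omega^\bullet(T,\nabla(\bar F))$ below $n$ follows from an identical filtered argument run weight by weight: each $T_w$ is finite-dimensional over $\bb F_q$, the $\f m$-filtration on $T_w$ is bounded, and the associated-graded complex is acyclic below $n$ by the mod-$\f m$ result.

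For $\nabla(D)$ I would filter by $W$-weight, $F_k \Omega^\bullet := \bigoplus_{j\leq k} \Omega^\bullet_j$. Writing $D_l = x_l\partial/\partial x_l + x_l \partial\bar F/\partial x_l$, the first summand preserves weight while the second raises it by $1$, so $\nabla(D)$ shifts the filtration by at most $1$ with associated-graded operator exactly $\nabla(\bar F)$. By the previous step the associated-graded complex is acyclic below $n$ with top cohomology $\tilde V$. Since the filtration is exhaustive and bounded on each finite total-weight subcomplex (each $T_w$ is finite-dimensional), the standard spectral sequence converges and degenerates at $E_1$, yielding acyclicity of $\Omega^\bullet(T,\nabla(D))$ in dimensions $<n$ and $H^n(\Omega^\bullet(T,\nabla(D))) \cong \tilde V$ as filtered $S$-modules, which is the decomposition $T = \tilde V \oplus \sum_l D_l T$. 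The rank formula $n!\,\mathrm{Vol}(\Delta_\infty(\bar f,\mu))$ is inherited in both cases from $|\bar B|$, supplied by Theorem~\ref{T: R decomp}.

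The main obstacle is purely bookkeeping: the $W$-grading on $T$ is unbounded above and $T$ is not finitely generated over $S$, so ordinary Nakayama does not apply globally. What saves every step is that each weight piece $T_w$ is finite-dimensional over $\bb F_q$ (bounded lattice points of $\tilde M(\bar F)$ at height $w$); this localizes the Nakayama-type lifts to finite-dimensional quotients, makes both the $\f m$-adic and $W$-weight filtrations bounded on each weight, and terminates the iterative reabsorption of the lower-weight noise produced by the $x_l \partial/\partial x_l$ part of $D_l$. Once this weight-by-weight finiteness is in hand, the three ingredients---mod-$\f m$ decomposition, filtered Nakayama lift, and $W$-weight spectral sequence---combine without further subtlety.
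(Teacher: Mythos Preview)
Your strategy is correct in spirit and genuinely different from the paper's. The paper argues by explicit induction on $\mathrm{ord}_\Lambda$: a relation $\sum_l x_l(\partial\bar F/\partial x_l)\xi_l=0$ (and likewise an arbitrary element of $T$) is peeled off one $\Lambda^{1/D}$-layer at a time, each layer pushed through $pr_{\bar\lambda}$ to the fiber, handled there by Theorem~\ref{T: R decomp}, and re-lifted. You replace this hand computation by graded Nakayama and an $\f m$-filtration spectral sequence; this is cleaner, while the paper's version is constructive and better mirrors the later $p$-adic estimates. The passage from $\nabla(\bar F)$ to $\nabla(D)$ is the same in both treatments---the paper simply invokes ``a standard argument'' citing \cite{H-S} and \cite{AdolpSperb-ExponentialSumsand-1989}, and your $W$-weight filtration is precisely that argument (just shift so that $F_k\Omega^i=\bigoplus_{j\le k+i}\Omega^i_j$, in order that $\nabla(D)$ actually preserve the filtration).

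There is, however, a real gap in your directness step. Tensoring $0\to K\to\tilde V\oplus N\to T\to 0$ with $\bb F_q$ and using $\mathrm{Tor}_1^S(T,\bb F_q)=0$ gives
\[
0\longrightarrow K/\f m K\longrightarrow (\tilde V/\f m\tilde V)\oplus(N/\f m N)\longrightarrow \bar T\longrightarrow 0.
\]
Because $\bar B$ is part of an $S$-basis of $T$, the map $\tilde V/\f m\tilde V\to\bar T$ is injective with image $\bar V$; but $N/\f m N\to\bar T$ has kernel $(N\cap\f m T)/\f m N$, and nothing you have written forces this to vanish. Directness mod $\f m$ only yields $\bar V\cap\bar N=0$, from which one computes $K/\f m K\cong (N\cap\f m T)/\f m N$, not that it is zero. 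Equivalently, you are implicitly using $\mathrm{Tor}_1^S(T/N,\bb F_q)=0$, which you have not established.

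The fix is easy inside your own framework: run the acyclicity argument \emph{first}. Your $\f m$-filtration spectral sequence on the Koszul complex already shows $H^i(\Omega^\bullet(T,\nabla(\bar F)))=0$ for $i<n$, so the Koszul complex is a free $S$-resolution of $T/N$; hence $\mathrm{Tor}_i^S(T/N,\bb F_q)=H_i(\Omega^{n-\bullet}(\bar T,\nabla(\bar F)))=0$ for $i>0$. This supplies $N\cap\f m T=\f m N$, after which your Tor/Nakayama argument for $K=\tilde V\cap N=0$ goes through. It also shows directly that $T/N$ is $S$-flat, hence (being a quotient of the finite free $S$-module $\tilde V$ over the PID $S$) free of rank $|\bar B|=n!\,\mathrm{Vol}\,\Delta_\infty(\bar f,\mu)$.
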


\begin{proof}
We begin by showing $\{ x_i \frac{\partial \bar F}{\partial x_i} \}$ is a regular sequence in $T$. Suppose 
\begin{equation}\label{E: 5}
\sum_{i=1}^n x_i \frac{\partial \bar F(\Lambda, x)}{\partial x_i} \xi_i = 0
\end{equation}
with $\{ \xi_i \}_{i=1}^n \in T$. Since $x_i \frac{\partial \bar F}{\partial x_i}$ are homogeneous elements of weight 1, we may assume the $\{ \xi_i\}$ are themselves homogeneous of weight, say, $r$.

Any monomial $\Lambda^r x^v \in \bb F_q[\tilde M(\bar F)]$ may be factored $\Lambda^{r-m(v)}( \Lambda^{m(v)} x^v)$. We say $ord_\Lambda(\Lambda^r x^v) := r - m(v) \in (1/D)\bb Z_{\geq 0}$. We define the $\Lambda$-weight of the monomial $\Lambda^r$, $r \in (1/D)\bb Z_{\geq 0}$, by setting $W_\Lambda(\Lambda^r) := r(1 - l_\sigma(\mu))$. It follows then that
\begin{align*}
W(\Lambda^r x^v) &=  w(v) + W_\Lambda( \Lambda^{r-m(v)}) \\
&=  w(v) + (r-m(v))(1-l_\sigma(\mu)).
\end{align*}
We refer to $W(\Lambda^r x^v)$ as the total weight of the monomial $\Lambda^r x^v$.  Suppose $\xi$ is homogeneous of (total) weight $r \in (1/e) \bb Z_{\geq 0}$. We may write $\xi = \sum_{i = i_0}^k \Lambda^{i/D} \xi^{(i)}$, where $\xi^{(i)} \in T$ is homogeneous of (total) weight $r - (i/D)(1 - l_\sigma(\mu))$ and the terms in the support of $\xi^{(i)}$ all have $ord_\Lambda = 0$. The upper limit $k$ in the sum must be $\leq \lfloor \frac{rD}{1 - l_\sigma(\mu)} \rfloor$ and without loss of generality we may write $k = \lfloor \frac{rD}{1 - l_\sigma(\mu)} \rfloor$. 

Assume now in (\ref{E: 5}), $\min_{1 \leq j \leq n}\{ ord_\Lambda(\xi_j) \} = i_0 / D$ so we may write
\begin{equation}\label{E: 7}
\xi_j = \sum_{i=i_0}^k \Lambda^{i/D} \xi_j^{(i)},
\end{equation}
each $\xi_j^{(i)}$  homogeneous of total weight $r - (i/D)(1-l_\sigma(\mu))$ and $ord_\Lambda( \xi^{(i)}_j) \geq 0$.  We will prove there is a skew-symmetric set $\{ \eta_{jl} \} \subset T$ and elements $\{ \tilde \xi_j \}_{j=1}^n \subset T$ with $\tilde \xi_j$ homogeneous of total weight $r$, such that
\begin{equation}\label{E: 8}
\xi_j = \tilde \xi_j + \sum_{l=1}^n x_l \frac{\partial \bar F(\Lambda, x)}{\partial x_l} \eta_{jl}
\end{equation}
and 
\[
\min_{1\leq j\leq n} \left\{ ord_\Lambda(\tilde \xi_j) \right\} \geq (i_0 + 1)/D.
\]
Assume we have done this. Then since $\{ \eta_{jl}\}$ is skew-symmetric, $\sum_{j=1}^n x_j \frac{\partial \bar F(\Lambda, x)}{\partial x_j} \tilde \xi_j = 0$. Thus the preceding process may be repeated replacing $\{ \xi_i \}$ with $\{ \tilde \xi_i \}$ in (\ref{E: 5}). It terminates in a finite number of steps since we have $k$ at most $\lfloor rD/(1-l_\sigma(\mu)) \rfloor$ so that it will follow that
\[
\xi_j = \sum_{l=1}^n x_l \frac{\partial \bar F}{\partial x_l} \tilde \eta_{jl}
\]
with $\{ \tilde \eta_{jl} \}$ skew-symmetric which will establish the claim that $\{ x_j \frac{\partial \bar F}{\partial x_j} \}$ is a regular sequence in $T$.

To establish (\ref{E: 8}), substitute (\ref{E: 7}) into (\ref{E: 5}). It follows then that 
\[
ord_\Lambda \left( \sum_{j=1}^n x_j \frac{\partial \bar F}{\partial x_j} \xi_j^{(i_0)} \right) \geq 0.
\]
For $\bar \lambda \in \bb F_q^*$, $pr_{\bar \lambda}: T \rightarrow \bar R^{(\bar \lambda)}$ is a surjective map of $\bb F_q$-algebras. The Laurent polynomials $\{ x_j \frac{\partial \bar F(\Lambda, x)}{\partial x_j} \}$ map to $\{ x_j \frac{\partial \bar F(\bar \lambda, x)}{\partial x_j} \}$; these latter form a regular sequence in $\bar R^{(\bar \lambda)}$ by Theorem \ref{T: nondeg} and \cite{AdolpSperb-ExponentialSumsand-1989}. Since the kernel of $pr_{\bar \lambda}$ is $\f m T$, it follows that there is a skew-symmetric set $\{ \gamma_{jl} \} \subset T$, homogeneous of weight $r-1-(i_0/D)(1 - l_\sigma(\mu))$, with
\[
\xi_j^{(i_0)}(x, \Lambda) = \sum_{l=1}^n x_l \frac{\partial \bar F(\Lambda, x)}{\partial x_l} \gamma_{jl}(\Lambda, x) + \Lambda^{1/D} \zeta_j(\Lambda, x)
\]
where $\zeta_j(\Lambda, x)$ is homogeneous of total weight $r - \left(\frac{i_0 + 1}{D} \right)(1 - l_\sigma(\mu))$. This gives the desired relation (\ref{E: 8}) with $\tilde \xi_j = \Lambda^{(i_0 +1)/D} \zeta_j$ and $\eta_{jl} = \Lambda^{i_0/D} \gamma_{jl}$. 

This shows $H^i(\Omega^\bullet(T, \nabla(\bar F))) = 0$ for $i \not= n$. A standard argument \cite{H-S} or \cite{AdolpSperb-ExponentialSumsand-1989} then shows that $H^i(\Omega^\bullet(T, \nabla(D))) = 0$ for $i \not= n$. To establish
\begin{equation}\label{E: 9}
T^{(r)} = V^{(r)} + \sum_{i=1}^n x_i \frac{\partial \bar F(\Lambda, x)}{\partial x_i} T^{(r-1)}
\end{equation}
it is enough to show the left side is contained in the right. In a manner altogether similar to the above argument, using downward induction on $ord_\Lambda$, we may assume that this holds for elements $\xi \in T^{(r)}$ with $ord_\Lambda(\xi) \geq (k_0 + 1)/D$. Suppose now that $\xi \in T^{(r)} $ and $ord_{\Lambda}(\xi) = k_0/D$. We may write
\[
\xi = \Lambda^{k_0/D} \xi^{(k_0)}  + \eta
\]
with $\eta \in T^{(r)}$ and $ord_\Lambda(\eta) \geq (k_0+1) / D$ and $\xi^{(k_0)}$ homogeneous of total weight $r - (k_0/D)(1 - l_\sigma(\mu))$ and $ord_\Lambda(\xi^{(k_0)}) = 0$.

Then by \cite{AdolpSperb-ExponentialSumsand-1989}
\[
pr_{\bar \lambda}(\xi^{(k_0)}) = \sum a_v x^v + \sum_{i=1}^n x_i \frac{\partial \bar F(\bar \lambda, x)}{\partial x_i} \zeta_i(x)
\]
with $v$ running over $B_{\rho}^{(\bar \lambda)}$, and $\rho = r - (k_0/D)(1 - l_\sigma(\mu))$. Let $l_{\bar \lambda}$ be the $\bb F_q$-linear map $l_{\bar \lambda}: R^{(\bar \lambda)} \rightarrow T$ which sends $x^v \mapsto (\Lambda^{m(v)} / \bar \lambda^{m(v)}) x^v$ so that $pr_{\bar \lambda} \circ l_{\bar \lambda} = 1$. Note that $l_{\bar \lambda}$ is $\bb F_q$-linear but not in general an algebra map. Then
\[
pr_{\bar \lambda}\left( \xi^{(k_0)} - \sum a_v \bar \lambda^{-m(v)} \Lambda^{m(v)} x^v - \sum_{i=1}^n x_i \frac{\partial \bar F(\Lambda, x)}{\partial x_i} l_{\bar \lambda}(\zeta_i(x)) \right)  = 0
\]
so that 
\[
\xi^{(k_0)} = \sum_{v \in B_{\bar \lambda}^{(\rho)}} a_v \bar \lambda^{-m(v)} \Lambda^{m(v)} x^v + \sum_{i=1}^n x_i \frac{\partial \bar F}{\partial x_i} l_{\bar \lambda}(\zeta_i(x)) + \Lambda^{1/D} \gamma
\]
where $\gamma \in T$, homogeneous of weight $r - (k_0+1)(1 - l_\sigma(\mu)) / D$ and $ord_{\Lambda}(\gamma) \geq (k_0 + 1)/D$. Thus
\[
\xi = (\eta + \Lambda^{(k_0 + 1)/D} \gamma) + \sum_{v \in B_{\bar \lambda}^{(\rho)}} \frac{a_v}{\bar \lambda^{m(v)}} \Lambda^{m(v) + k_0/D} x^v + \sum_{i=1}^n x_i \frac{\partial \bar F(\Lambda, x)}{\partial x_i} (\Lambda^{k_0/D} l_{\bar \lambda}(\zeta_i(x)).
\]
The result (\ref{E: 9}) then follows by the induction hypothesis applied to $\xi' = \eta + \Lambda^{(k_0 + 1)/D} \gamma$.

To establish that the sum on the right side of (\ref{E: 9}) is direct assume 
\[
v(\Lambda, x) = \sum_{i=1}^n x_i \frac{\partial \bar F(\Lambda, x)}{\partial x_i} \zeta_i(\Lambda, x).
\]
By a well-known argument, we may assume $k_0/D = ord_\Lambda(v) = \min\{ord_\Lambda \zeta_i(\Lambda, x)\}$. Let $v = \Lambda^{k_0/D} \tilde v$ and $\zeta_i = \Lambda^{k_0/D} \tilde \zeta_i$. Then
\[
\tilde v(\Lambda, x) = \sum_{i=1}^n x_i \frac{\partial \bar F}{\partial x_i} \tilde \zeta_i(\Lambda, x)
\]
so that
\[
\tilde v(\bar \lambda, x) = \sum_{i=1}^r x_i \frac{\partial \bar F(\bar \lambda, x)}{\partial x_i} \tilde \zeta_i(\bar \lambda, x)
\]
with both sides nontrivial, violating the directness of sum established in Theorem \ref{T: R decomp} for $R^{(\bar \lambda)}$. 
\end{proof}

\begin{theorem}\label{T: 3.3}
Let $\bar \lambda \in \overline{\bb F}_q^*$. Let $\tilde B := \{ x^v \mid \Lambda^{m(v)} x^v \in \bar B \}$. Let $\tilde V_{\bar \lambda}$ be the $\bb F_q^{(\bar \lambda)}$-span of $\tilde B$ in $R^{(\bar \lambda)}$ then 
\[
R^{(\bar \lambda)} = \tilde V_{\bar \lambda} \oplus \sum_{i=1}^n x_i \frac{\partial \bar F(\bar \lambda, x)}{\partial x_i} R^{(\bar \lambda)}.
\]
In particular, the monomial basis $\tilde B$ ($= B^{(\bar \lambda_0)}$ for our initial fixed chosen $\bar \lambda = \bar \lambda_0 \in \bb F_q^*$) is independent of the choice of $\bar \lambda$.
\end{theorem}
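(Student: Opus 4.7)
The plan is to deduce the decomposition for $R^{(\bar\lambda)}$ from the decomposition of the total space $T$ (Theorem \ref{T: 3.2}) by reducing modulo $\f m T$, identifying $\bar T \otimes_{\bb F_q} \bb F_q^{(\bar\lambda)} \cong \bar R^{(\bar\lambda)}$ via Theorem \ref{T: 3.1}, and then using a dimension count to force directness.

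I would start with the $S$-module identity
\[
T = \tilde V \oplus \sum_{i=1}^n x_i \frac{\partial \bar F(\Lambda,x)}{\partial x_i}\, T
\]
from Theorem \ref{T: 3.2}. Tensoring with $\bb F_q^{(\bar\lambda)}$ over $\bb F_q$ and applying the $\bb F_q^{(\bar\lambda)}$-linear surjection $pr_{\bar\lambda}: T \otimes_{\bb F_q} \bb F_q^{(\bar\lambda)} \to \bar R^{(\bar\lambda)}$ of Theorem \ref{T: 3.1}, the basis $\bar B = \{\Lambda^{m(v)} x^v\}_{v \in B^{(\bar\lambda_0)}}$ maps onto $\{\bar\lambda^{m(v)} x^v\}_{v \in B^{(\bar\lambda_0)}}$, whose $\bb F_q^{(\bar\lambda)}$-span is exactly $\tilde V_{\bar\lambda}$ (since the scalars $\bar\lambda^{m(v)}$ are all nonzero). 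For the Jacobian part, the splitting $x_i \partial\bar F(\Lambda,x)/\partial x_i = x_i \partial\bar f/\partial x_i + \mu_i \Lambda x^\mu$ has the feature that each monomial of $x_i \partial\bar f/\partial x_i$ is supported on $\sigma$ where $m(v)=0$, while $m(\mu) = 1$, so $\Lambda x^\mu = \Lambda^{m(\mu)} x^\mu$ already sits in the lowest $\Lambda$-order layer of $T$. Thus both pieces survive $pr_{\bar\lambda}$ and combine to give $pr_{\bar\lambda}(x_i \partial\bar F(\Lambda,x)/\partial x_i) = x_i \partial\bar F(\bar\lambda,x)/\partial x_i$. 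Since $pr_{\bar\lambda}$ descends to the graded algebra isomorphism $\bar T \otimes \bb F_q^{(\bar\lambda)} \xrightarrow{\sim} \bar R^{(\bar\lambda)}$, multiplication by $x_i \partial\bar F(\Lambda,x)/\partial x_i$ in $T$ transports to multiplication by $x_i \partial\bar F(\bar\lambda,x)/\partial x_i$ in $\bar R^{(\bar\lambda)}$, yielding
\[
\bar R^{(\bar\lambda)} = \tilde V_{\bar\lambda} + \sum_{i=1}^n x_i \frac{\partial \bar F(\bar\lambda,x)}{\partial x_i}\, \bar R^{(\bar\lambda)}.
\]

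To upgrade this to a direct sum, I would invoke Theorem \ref{T: nondeg}, which guarantees that $\bar F(\bar\lambda, x)$ is nondegenerate with respect to $\Delta_\infty(\bar f, \mu)$ for every $\bar\lambda \in \overline{\bb F}_q^*$. By \cite{AdolpSperb-ExponentialSumsand-1989} (as in the proof of Theorem \ref{T: R decomp}), the quotient $\bar R^{(\bar\lambda)} / \sum_i x_i (\partial\bar F(\bar\lambda,x)/\partial x_i)\bar R^{(\bar\lambda)}$ is a free $\bb F_q^{(\bar\lambda)}$-module of rank $n!\, Vol(\Delta_\infty(\bar f, \mu))$. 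On the other hand, $\tilde V_{\bar\lambda}$ is spanned by the set $\tilde B$ of $n!\,Vol(\Delta_\infty(\bar f, \mu))$ distinct monomials and so has that exact dimension. A surjection between $\bb F_q^{(\bar\lambda)}$-vector spaces of equal finite dimension is an isomorphism, so the sum above is direct. Finally, because $\tilde B$ is a set of monomials common to both $R^{(\bar\lambda)}$ and its associated graded $\bar R^{(\bar\lambda)}$, this decomposition of the graded ring lifts weight-by-weight to the same decomposition of the filtered ring $R^{(\bar\lambda)}$.

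The main obstacle is the verification in the first paragraph that $pr_{\bar\lambda}$ intertwines multiplication by $x_i \partial\bar F(\Lambda, x)/\partial x_i$ with multiplication by $x_i \partial\bar F(\bar\lambda, x)/\partial x_i$. The crux is the identity $m(\mu) = 1$: were $\Lambda x^\mu$ to have $\Lambda$-order strictly above $m(\mu)$, it would lie in $\f m T$ and the deformation parameter $\bar\lambda$ would be annihilated in the quotient; it is precisely the fact that $\mu$ is a vertex of weight $1$ of $\Delta_\infty(\bar f,\mu)$ that keeps $\bar\lambda$ alive after projection. Once this is in hand, the rest of the argument is formal.
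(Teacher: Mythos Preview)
Your argument is correct, and the handling of $pr_{\bar\lambda}$ (in particular the observation $m(\mu)=1$, so $\Lambda x^\mu$ survives the projection as $\bar\lambda x^\mu$) is exactly what is needed to get the sum decomposition in $\bar R^{(\bar\lambda)}$.

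Where you differ from the paper is in the proof of directness. The paper does not invoke the dimension formula again; instead, given $v \in \tilde V_{\bar\lambda}$ with $v = \sum_i x_i\,\partial\bar F(\bar\lambda,x)/\partial x_i\,\zeta_i$, it lifts both sides to $T \otimes \bb F_q^{(\bar\lambda)}$ via the section $l_{\bar\lambda}$, writes the discrepancy as $\Lambda^{1/D}\eta$, decomposes $\eta$ using the already-established direct sum for $T$ (Theorem \ref{T: 3.2}), and then applies $pr_{\bar\lambda}$ to force $v=0$. Your route --- comparing $|\tilde B| = n!\,Vol(\Delta_\infty(\bar f,\mu))$ with the rank of the Jacobian quotient coming from Theorem \ref{T: nondeg} and \cite{AdolpSperb-ExponentialSumsand-1989} --- is shorter and more elementary, at the cost of re-invoking the Adolphson--Sperber rank formula for each $\bar\lambda$ rather than deducing everything from the single structural result on $T$. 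The paper's lifting argument, by contrast, shows that directness over every fiber is a purely formal consequence of directness in the total space, which is more in keeping with the relative-cohomology viewpoint the paper is developing.
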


\begin{proof}
$\tilde V_{\bar \lambda} = pr_{\bar \lambda}(V \otimes_{\bb F_q} \bb F_q^{(\bar \lambda)})$. It follows that $R^{(\bar \lambda)}$ is equal to the sum on the right. It remains to show that the sum on the right is direct. Suppose
\[
v(x) = \sum_{i=1}^n x_i \frac{\partial \bar F(\bar \lambda, x)}{\partial x_i} \zeta_i(x).
\]
Using $l_{\bar \lambda}$ we lift to $T \otimes_{\bb F_q} \bb F_q^{(\bar \lambda)}$ obtaining
\[
l_{\bar \lambda}(v) - \sum_{i=1}^n x_i \frac{\partial \bar F(\Lambda, x)}{\partial x_i} l_{\bar \lambda}(\zeta_i(x)) = \Lambda^{1/D} \eta(\Lambda, x)
\]
with $l_{\bar \lambda}(v) \in V \otimes \bb F_q^{(\bar \lambda)}$. We may decompose $\eta(\Lambda, x) = \hat v + \sum_{i=1}^n x_i \frac{\partial \bar F(\Lambda, x)}{\partial x_i} \gamma_i$ according to the sum in (\ref{E: 9}). We then obtain by directness
\[
l_{\bar \lambda}(v) - \Lambda^{1/D} \hat v(\Lambda, x) = 0.
\]
But  applying $pr_{\bar \lambda}$ to this equality it follows that $v = 0$. This implies the sum is direct.
\end{proof}

\section{The case $l_\sigma(\mu) > 1$}\label{S: 3}

In this case we prefer  to treat the family in the form
\[
\bar F(\Lambda, x) = \bar f(x) + \Lambda^{-1}x^\mu.
\]
We then replace hypothesis $H(iv)$  by $H(iv)'$:

\begin{itemize}
\item[] H(iv$)'$. $\mu$ is an interior point of $Cone(\bar f)$ and $l_\sigma(\mu) > 1$.
\end{itemize}

It is not strictly necessary to assume $\mu$ is an interior point of $Cone(\bar{f})$, but without this assumption we no longer have an isobaric deformation (in particular, the set of vertices of $\Delta_{\infty}(\bar{f})$ will not be a subset of the vertices of $\Delta_{\infty}(\bar{f}, \mu))$. For simplicity of  exposition then we assume $\mu$ is an interior point of $Cone(\bar f)$. Under the hypothesis $H(iv)'$ every codimension 2 face $\tau$ of $\Delta_{\infty}(\bar{f})$ is visible from $\mu$ (so $\Gamma = \Gamma_1$ here) and $\Delta(\bar{f})$ is not a codimension one face at $\infty$ of $\Delta_\infty(\bar f, \mu)$. Then the codimension one faces of $\Delta_\infty(\bar f, \mu)$ at $\infty$ are all of the form $C(\tau, \mu)$ for the codimension 2 faces $\tau$ at $\infty$ of $\Delta_\infty(\bar f)$. We write (as in Section \ref{S: 1}) $\phi^{(\tau)}(x) \in \bb Z[x_1, \ldots, x_n]$ for the integral linear form (with relatively prime coefficients) spanned by $\tau \cup \{0\}$, with the signs adjusted so that  $Cone(\bar f) = \{ x \in \bb R^n \mid \phi^{(\tau)}(x) \geq 0 \text{ for all $\tau$ codimension 2 face at $\infty$ of $\Delta_\infty(\bar f)$} \}$. Under hypothesis $H(iv)'$ we have $\phi^{(\tau)}(\mu) > 0$ for all $\tau \in \Gamma$. We note for emphasis that hypothesis $H(v)$ now reads $p \nmid \prod_{\tau \in \Gamma} \phi^{(\tau)}(\mu)$. If $Cone(\tau, \mu)$ is the cone over $C(\tau, \mu)$ in $\bb R^n$, then the weight function in this subcone of $\Delta_\infty(\bar f, \mu)$ is given by
\[
w_{\tau, \mu}(v) = l_\sigma(v) - \frac{\phi^{(\tau)}(v)}{\phi^{(\tau)}(\mu)}(l_\sigma(\mu)-1).
\]
Similar to Theorem \ref{T: nondeg}, it is straightforward to prove the following.

\begin{theorem}
For any $\bar \lambda \in \overline{\bb F}_q^*$, the Laurent polynomial $\bar F(\bar \lambda, x) \in \bb F_q^{(\bar \lambda)}[x_1^\pm, \ldots, x_n^\pm]$ is nondegenerate with respect to $\Delta_\infty(\bar f, \mu)$. 
\end{theorem}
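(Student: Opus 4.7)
The plan is to repeat the argument of Theorem~\ref{T: nondeg} essentially verbatim, making only minor modifications to accommodate (i) the sign reversal in the deformation coefficient (now $\Lambda^{-1}$ rather than $\Lambda$), and (ii) the new geometric configuration in which $\mu$ lies above, rather than below, the hyperplane $l_\sigma(x)=1$.

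First I would classify the faces at infinity of $\Delta_\infty(\bar f, \mu)$. The discussion preceding the theorem already establishes that under H(iv)$'$, every codimension~2 face $\tau$ at infinity of $\Delta_\infty(\bar f)$ is visible from $\mu$ (so $\Gamma = \Gamma_1$) and $\sigma = \Delta(\bar f)$ is no longer a codimension~1 face at infinity of $\Delta_\infty(\bar f, \mu)$. Consequently any face $\omega$ at infinity of $\Delta_\infty(\bar f, \mu)$ either fails to contain $\mu$ -- in which case $\omega$ is a subface of some $\tau \in \Gamma$ and hence a face at infinity of $\Delta_\infty(\bar f)$ -- or contains $\mu$ and has the form $\omega = C(\gamma, \mu)$ for some face $\gamma$ at infinity of $\Delta_\infty(\bar f)$ contained in some $\tau \in \Gamma$.

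Next I would split into these two cases. If $\mu \notin \omega$, then $\bar F^{(\omega)}(\bar \lambda, x) = \bar f^{(\omega)}(x)$ and nondegeneracy along $\omega$ follows immediately from H(iii). If $\mu \in \omega$, then $\bar F^{(\omega)}(\bar \lambda, x) = \bar f^{(\gamma)}(x) + \bar \lambda^{-1} x^\mu$. Suppose $\bar x \in (\overline{\bb F}_q^*)^n$ is a common zero of $\{x_l \partial \bar F^{(\omega)}/\partial x_l\}_{l=1}^n$. Applying the linear combination $\sum_{i=1}^n b_i^{(\tau)}(x_i \partial/\partial x_i)$ to $\bar F^{(\omega)}$ and evaluating at $\bar x$ yields $0$. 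Since every $v$ in the support of $\bar f^{(\gamma)}$ lies in $\tau$ and hence satisfies $\phi^{(\tau)}(v) = \sum_i b_i^{(\tau)} v_i = 0$, the contribution from $\bar f^{(\gamma)}$ vanishes identically. What remains is $\phi^{(\tau)}(\mu)\,\bar \lambda^{-1}\,\bar x^\mu = 0$. The revised form of H(v) (now reading $p \nmid \prod_{\tau \in \Gamma} \phi^{(\tau)}(\mu)$) guarantees $\phi^{(\tau)}(\mu) \neq 0$ in $\bb F_q$, and $\bar \lambda^{-1} \neq 0$ by assumption. Hence $\bar x^\mu = 0$, forcing some $\bar x_i = 0$ and contradicting $\bar x \in (\overline{\bb F}_q^*)^n$.

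There is no genuine new obstacle here: the proof is structurally identical to that of Theorem~\ref{T: nondeg}, and the only thing to verify carefully is that the geometric classification of faces at infinity in this regime produces the same two-case decomposition as before -- which the discussion at the start of Section~\ref{S: 3} already supplies.
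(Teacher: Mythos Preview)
Your proposal is correct and follows exactly the approach the paper intends: the paper gives no separate proof here, saying only ``Similar to Theorem~\ref{T: nondeg}, it is straightforward to prove the following,'' and your argument is precisely that straightforward adaptation, with the sign change $\bar\lambda^{-1}$ in place of $\bar\lambda$ and the face classification drawn from the discussion at the start of Section~\ref{S: 3}.
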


We also need to consider $\Delta_\infty(\bar F(\Lambda, x))$ in $\bb R^{n+1}$. In the case under consideration $\bar F(\Lambda, x)$ is quasi-homogeneous with all monomials in $Supp(\bar F)$ satisfying $W(\Lambda; x) = 1$, where
\[
W(\Lambda; x) := l_\sigma(x) + \Lambda( l_\sigma(\mu) - 1).
\]
The hyperplane $W(\Lambda, x) = 1$ meets the positive $\Lambda$-coordinate axis at the point $P_0 = ((l_\sigma(\mu) - 1)^{-1}, 0, \ldots, 0)$.  Set $\Delta_\infty(\bar F, P_0) := $ convex closure of $\Delta_\infty(\bar F) \cup \{P_0\}$ and $Cone(\bar F, P_0) := $ cone in $\bb R^{n+1}$ over $\Delta_\infty(\bar F, P_0)$. Define the extended monoid $\tilde M(\bar F) = Cone(\bar F, P_0) \cap \tilde L$, where $\tilde L$ is the extended lattice $((1/D) \bb Z) \times \bb Z^n$, where $D$ is the least common multiple of the set of positive integers $\{ \phi^{(\tau)}(\mu) \}_{\tau \in \Gamma}$. Let $e$ be the least positive integer, divisible by $D$,  such that $W(\tilde M(\bar F)) \subset(1/e)\bb Z_{\geq 0}$. 

Form $T = \bb F_q[\tilde{M}(\bar{F})]$ which is filtered in the usual way using $W$. Just as in Section \ref{S: 2}, $T$ is a free $\bb F_q[\Lambda^{1/D}]$ algebra. We describe now a basis. Just as in Section \ref{S: 1}, we assume here  each codimension 2 face $\tau$ at infinity of $\Delta_{\infty}(\bar{f})$ spans a hyperplane with equation say $\phi^{(\tau)}(x) = 0$ in $\bb R^n$, and that $Cone(\bar{F})$ in $\bb R^n$  is described by the simultaneous inequalities  $\{ \phi^{(\tau)}(x) \geq 0, \text{ for all } \tau \in \Gamma \}$. The hyperplane in $\bb R^{n+1}$ spanned by this linear space together with $\Lambda^{-1} x^{\mu}$ has equation
\[ 
\Lambda  \phi^{\tau}(\mu) + \phi^{\tau}(x) = 0
\]
and is spanned by a boundary face  of $Cone(\bar F, P_0)$. Hence $Cone(\bar{F}, P_0)$ is defined by the simultaneous inequalities 
\[
\{  \Lambda \geq  -\phi^{(\tau)}(x)/\phi^{(\tau)}(\mu), \text{ for all } \tau \in \Gamma \text{ and }  \phi^{(\tau)}(x) \geq 0, \text{ for all } \tau \in \Gamma \}.
\] 
It follows as in Section \ref{S: 2} that $T$ is a free $\bb F_q$-algebra with basis $\Lambda^r x^v$ with $(r;v) \in \tilde{M}(\bar{F})$. Furthermore it is a free $(S = \bb F_q[\Lambda^{1/D}])$-algebra with basis  $\Lambda^{m(v)} x^v$, where we have set $m(v) := - \phi^{(\tau)}(v) / \phi^{(\tau)}(\mu)$ for $v \in Cone(\tau, \mu) \cap \bb Z^n \subset Cone(\bar f) \cap \bb Z^n$ in $\bb R^n$. (Note $m(v) \leq 0$ in the present case. Note as well that $(r,v) \in \tilde{M}(\bar{F})$ is characterized by $v \in Cone(\bar F, \mu), r \geq m(v)$.) We may factor any monomial $\Lambda^r x^v$ in $\tilde M(\bar F)$ as $\Lambda^r x^v = \Lambda^{r-m(v)}( \Lambda^{m(v)} x^v)$ and define $W_\Lambda$ on $\{ \Lambda^{s/D} \mid s \in \bb Z_{\geq 0} \}$ via $W_\Lambda(s/D) = (s/D)(l_\sigma(\mu) - 1)$. Then the total weight may be written as
\begin{align*}
W(r, v) &= l_\sigma(v) - r(l_\sigma(\mu) - 1) \\
&=   w(v) + W_\Lambda(r - m(v)) .
\end{align*}
Finally, we may use here the $\bb F_q$-linear map (taking $\bar \lambda \in \bb F_q$ for convenience) $pr_{\bar \lambda}: T \rightarrow \bar R^{(\bar \lambda)}$ via $\Lambda^r x^v \mapsto (\Lambda^{r-m(v)}  |_{\Lambda = 0}) \bar \lambda^{m(v)} x^v$ which is surjective with kernel $\f mT = \Lambda^{1/D} T$ and induces on the quotient $\bar pr_{\bar \lambda}: \bar T = T / \f m T \rightarrow \bar R^{(\bar \lambda)}$ an isomorphism of graded $\bb F_q$-algebras. With this map, constructing complexes $\Omega^\bullet(T, \nabla(\bar F))$ and $\Omega^\bullet(T, \nabla(D))$ of $S$-algebras as in Section \ref{S: 2}, we conclude that Theorems \ref{T: 3.1}, \ref{T: 3.2}, and \ref{T: 3.3} hold here as well using similar arguments.

\section{$p$-adic theory}\label{S: 4}

Fix $\zeta_p$ a primitive $p$-th root of unity. Let $\bb Q_q$ be the unramified extension of $\bb Q_p$ of degree $a = [\bb F_q: \bb F_p]$ and let $\bb Z_q$ be its ring of integers. Then $\bb Z_q[\zeta_p]$ and $\bb Z_p[\zeta_p]$ are the ring of integers of $\bb Q_q(\zeta_p)$ and $\bb Q_p(\zeta_p)$, respectively. It is convenient to utilize totally ramified extensions, say $K$ and ${K_0}$, of $ \bb Q_q(\zeta_p)$ and $\bb Q_p(\zeta_p)$ respectively; let $\bb Z_q[\tilde \pi] $ and $\bb Z_p[\tilde \pi]$ be the respective ring of integers of these extension fields. We  accomplish this by adjoining an appropriate root of $\pi$ (where $\pi$ is a zero of $\sum_{j=0}^\infty t^{p^j} / p^j$ satisfying $ord_p(\pi) = 1/(p-1)$), say $\tilde \pi$, so that  any fractional valuations that may arise are in the value group of these fields, . Recall $W_\Lambda(r/D ) = (r/D)(1 - l_\sigma(\mu))$.  Set
\[
\c O_0 := \left\{ \sum_{r=0}^\infty C(r) \Lambda^{r/D} \pi^{W_\Lambda(r/D)} \mid C(r) \in \bb Z_q[\tilde \pi], C(r) \rightarrow 0 \text{ as } r \rightarrow \infty \right\}.
\]
We endow $\c O_0$ with a valuation via
\[
\left| \sum_{r=0}^\infty C(r) \Lambda^{r/D} \pi^{W_\Lambda(r/D)} \right| := \sup_{r \geq 0} |C(r)|.
\]
Under the reduction map mod $\tilde \pi$, $\c O_0$ maps onto $S= \bb F_q[\Lambda^{1/D}]$ by sending $\sum_{r=0}^\infty C(r) \Lambda^{r/D} \pi^{W_\Lambda(r/D)} \mapsto \sum \bar C(r) \Lambda^{r/D}$, and the map
\[
\c O_0 /\tilde \pi \c O_0 \rightarrow S
\]
is an isomorphism of $\bb F_q$-algebras. Let 
\[
\c C_0 := \left\{ \sum_{v \in M(\bar f, \mu)} \xi(v) \pi^{w(v)} \Lambda^{m(v)} x^v \mid \xi(v) \in \c O_0, \xi(v) \rightarrow 0 \text{ as } w(v) \rightarrow \infty \right\},
\]
a $p$-adic Banach $\c O_0$-algebra. Then the reduction map mod $\tilde \pi$ taking
\[
\sum_{v \in M(\bar f, \mu)} \xi(v) \pi^{w(v)} \Lambda^{m(v)} x^v \mapsto \sum_{v \in M(\bar f, \mu)} \bar \xi(v) \Lambda^{m(v)} x^v
\]
identifies the $S$-algebras
\[
\c C_0 /\tilde  \pi\c C_0 \rightarrow T.
\]
We now construct a complex of $p$-adic spaces similar to the characteristic $p$ complexes constructed in Section \ref{S: 1}. Set $\gamma_0 :=1$ and for $i \geq 1$, 
\begin{align*}
\gamma_i &:= \pi^{-1} \sum_{j=0}^i \pi^{p^j} / p^j \\
&= - \pi^{-1} \sum_{j=i+1}^\infty \pi^{p^j} / p^j.
\end{align*}
From this second description, we see that
\[
ord_p(\gamma_i) = \frac{p^{i+1}-1}{p-1} - (i+1)
\]
for every $i \geq 0$. Let $F(\Lambda, x) \in \bb Z_q[\Lambda, x_1^\pm, \ldots, x_n^\pm]$ be the lifting of $\bar F(\Lambda, x)$ using Teichm\"uller units for all coefficients. Define
\[
F^{(\infty)}(\Lambda, x) = \sum_{i=0}^\infty \gamma_i F^{\sigma^i}(\Lambda^{p^i}, x^{p^i})
\]
where $\sigma \in Gal(K / K_0)$ is the Frobenius automorphism of $ Gal(\bb Q_q/ \bb Q_p)$ extended to $K$ by setting $\sigma (\tilde \pi) = \tilde \pi$. Note that $F^{\sigma^i}(\Lambda^{p^i}, x^{p^i})$ has total weight $W$ less than or equal to $p^i$ for all $i \geq 0$. Since
\[
x_l \frac{\partial (F^{\sigma^i}(\Lambda^{p^i}, x^{p^i}))}{\partial x_l} = p^l x_l \frac{\partial(F^{\sigma^i})}{\partial x_l}(\Lambda^{p^i}, x^{p^i})
\]
and
\[
ord_p(\pi \gamma_i p^i) - p^i/(p-1) = p^i - 1 \geq 0,
\]
it follows that multiplication by $\pi x_l \frac{\partial F^{(\infty)}(\Lambda, x)}{\partial x_l}$ defines an endomorphism of $\c C_0$. We then define a complex of $\c O_0$-modules $\Omega^\bullet(\c C_0, \nabla(D^{(\infty)}))$ by
\[
\Omega^i(\c C_0, \nabla(D^{(\infty)})) := \bigoplus_{1\leq j_1 < j_2 < \cdots < j_i \leq n} \c C_0 \frac{dx_{j_1}}{x_{j_1}} \wedge \cdots \wedge \frac{dx_{j_i}}{x_{j_i}}
\]
with boundary map
\[
\nabla(D^{(\infty)})(\xi \frac{dx_{j_1}}{x_{j_1}} \wedge \cdots \wedge \frac{dx_{j_i}}{x_{j_i}}) = \left( \sum_{l=1}^n D_l^{(\infty)}(\xi) \frac{dx_l}{x_l} \right) \wedge \frac{dx_{j_1}}{x_{j_1}} \wedge \cdots \wedge \frac{dx_{j_i}}{x_{j_i}},
\]
where
\[
D_l^{(\infty)} := x_l \frac{\partial}{\partial x_l} + \pi x_l \frac{\partial F^{(\infty)}}{\partial x_l}.
\]

Since we will sometimes want to consider the analogous situation with $\Lambda$ replaced by a power $\Lambda^q$ it is useful sometimes to incorporate $\Lambda$ as a subscript in our notation. In such cases we write 
\[
  D_{l,\Lambda}^{(\infty)}:= D_l^{(\infty)} = \frac{1}{\exp \pi F^{(\infty)}(\Lambda, x)} \circ x_l \frac{\partial}{\partial x_l} \circ \exp \pi F^{(\infty)}(\Lambda, x).
\] 
The reduction mod $\tilde \pi$ of this complex is precisely the complex $\Omega^\bullet(T, \nabla( D))$ of $S$-algebras of Section \ref{S: 2}. Using the same argument as in \cite[Theorem 3.3]{H-S}, or the one that appears in \cite[Theorem A1]{AdolpSperb-ExponentialSumsand-1989}, and goes back at least to Monsky \cite{Monsky-p-adicanalysisand-1970}, we have the following.

\begin{theorem}
The complex $\Omega^\bullet(\c C_0, \nabla( D^{(\infty)}))$ is acyclic except in top dimension $n$ and $H^n(\Omega^\bullet(\c C_0, \nabla(D^{(\infty)})))$ is a free $\c O_0$-module of rank equal to $n! Vol \> \Delta_\infty(\bar f, \mu)$. Furthermore, 
\[
\c C_0 = \sum_{(m(v); v) \in B} \c O_0 \pi^{m(v)} \Lambda^{m(v)} x^v \oplus \sum_{l=1}^n D_l^{(\infty)}(\Lambda, x)) \c C_0
\]
where
\[
D_l^{(\infty)} = x_l \frac{\partial}{\partial x_l} + \pi x_l \frac{\partial F^{(\infty)}(\Lambda, x)}{\partial x_l}.
\]
\end{theorem}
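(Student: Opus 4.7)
The plan is to lift the characteristic-$p$ direct sum decomposition of Theorem \ref{T: 3.2} to the $p$-adic Banach algebra $\c C_0$ by a successive approximation argument, in the tradition of Monsky \cite{Monsky-p-adicanalysisand-1970}, Adolphson--Sperber, and \cite{H-S}. The starting point is that the reduction modulo $\tilde\pi$ of $D_l^{(\infty)}$ is precisely the characteristic-$p$ operator $D_l$ of Section \ref{S: 2}: the $i=0$ summand $\pi x_l \partial F/\partial x_l$ reduces to $x_l \partial \bar F/\partial x_l$, while each summand $\pi\gamma_i x_l \partial(F^{\sigma^i}(\Lambda^{p^i},x^{p^i}))/\partial x_l$ with $i\geq 1$ carries strictly positive additional $p$-adic order thanks to the estimate $\mathrm{ord}_p(\pi\gamma_i p^i)-p^i/(p-1) \geq 0$. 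Consequently $\Omega^\bullet(\c C_0, \nabla(D^{(\infty)}))$ reduces modulo $\tilde\pi$ to the complex $\Omega^\bullet(T, \nabla(D))$ already analyzed in Section \ref{S: 2}.

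I first prove the decomposition of $\c C_0$. Given $\xi \in \c C_0$, reduce to $\bar\xi \in T$ and write $\bar\xi = \bar\eta_0 + \sum_l D_l \bar\zeta_{l,0}$ by Theorem \ref{T: 3.2}, with $\bar\eta_0$ in the $\bb F_q$-span of $\bar B$. Lift $\bar\eta_0$ monomial-by-monomial to $\eta_0 \in \bigoplus_{(m(v);v) \in B} \c O_0 \pi^{m(v)} \Lambda^{m(v)} x^v$, and lift each $\bar\zeta_{l,0}$ to $\zeta_{l,0} \in \c C_0$ by Teichm\"uller lifts of the generating monomials; the $\pi^{w(v)}$-scaling built into $\c C_0$ guarantees $\|\eta_0\|,\|\zeta_{l,0}\| \leq \|\xi\|$. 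Then $\xi_1 := \tilde\pi^{-1}(\xi - \eta_0 - \sum_l D_l^{(\infty)} \zeta_{l,0})$ lies in $\c C_0$ with $\|\xi_1\| \leq \|\xi\|$, and iterating produces uniformly bounded sequences $\eta_n, \zeta_{l,n}$ such that completeness of $\c C_0$ yields
\[
\xi = \sum_{n \geq 0} \tilde\pi^n \eta_n + \sum_{l=1}^n D_l^{(\infty)}\!\Bigl(\sum_{n \geq 0} \tilde\pi^n \zeta_{l,n}\Bigr).
\]
Directness is checked by assuming a relation $\sum_v c_v \pi^{m(v)} \Lambda^{m(v)} x^v + \sum_l D_l^{(\infty)} \zeta_l = 0$, scaling by an appropriate power of $\tilde\pi$ to clear common factors, and reducing modulo $\tilde\pi$ to contradict the directness of the decomposition in Theorem \ref{T: 3.2}.

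Acyclicity in degrees below $n$ follows by a Koszul argument: the operators $\{D_l^{(\infty)}\}$ commute (being conjugates of the commuting operators $x_l \partial/\partial x_l$ by $\exp\pi F^{(\infty)}$), and their reductions $\{x_l\partial\bar F/\partial x_l\}$ form a regular sequence in $T$ by Theorem \ref{T: 3.2}. The same successive approximation lifts regularity to $\c C_0$, so the associated Koszul complex is acyclic except in top dimension, as in \cite[Theorem A1]{AdolpSperb-ExponentialSumsand-1989}. Freeness of $H^n$ as an $\c O_0$-module with basis indexed by $B$ (of cardinality $n!\,Vol \> \Delta_\infty(\bar f,\mu)$) is then immediate from the established decomposition.

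The main obstacle is maintaining uniform control of $p$-adic norms throughout the successive approximation so that the iterates remain uniformly bounded in $\c C_0$. This relies critically on the factor of $\pi$ multiplying $x_l\partial F^{(\infty)}/\partial x_l$ in $D_l^{(\infty)}$ (matching the polyhedral $\pi$-weighting on $\c C_0$) and on the compatibility of the weight function $w(v)$ on $M(\bar f,\mu)$ with the powers of $\pi$ defining $\c C_0$. Ensuring that these norm estimates are sharp enough that the iterated series converge in $\c C_0$ itself, rather than merely in a larger completion, is where most of the technical work lies.
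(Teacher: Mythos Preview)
Your proposal is correct and follows essentially the same approach the paper indicates: the paper does not give a proof here but simply cites the standard Monsky/Adolphson--Sperber successive-approximation argument (\cite[Theorem 3.3]{H-S}, \cite[Theorem A1]{AdolpSperb-ExponentialSumsand-1989}), which is precisely what you have sketched. One minor slip: the reduction modulo $\tilde\pi$ of $D_l^{(\infty)}$ is the full operator $D_l = x_l\,\partial/\partial x_l + x_l\,\partial\bar F/\partial x_l$ (including the derivation term), not just multiplication by $x_l\,\partial\bar F/\partial x_l$; the acyclicity you lift is that of $\Omega^\bullet(T,\nabla(D))$ established in Theorem~\ref{T: 3.2}, which in turn was deduced there from the regularity of $\{x_l\,\partial\bar F/\partial x_l\}$.
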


\section{Frobenius}\label{S: 5}

We define
\begin{align}\label{E: Frobs}
\alpha_1 &:= \sigma^{-1} \circ \frac{1}{\exp \pi F^{(\infty)}(\Lambda^p, x)} \circ \psi_p \circ \exp \pi F^{(\infty)}(\Lambda, x) \\
\alpha &:= \frac{1}{\exp \pi F^{(\infty)}(\Lambda^q, x)} \circ \psi_q \circ \exp \pi F^{(\infty)}(\Lambda, x) \notag
\end{align}
where $\psi_p$ and $\psi_q$ are defined by
\begin{align*}
\psi_p( \sum A(v) x^v) &= \sum A(pv) x^v \\
\psi_q( \sum A(v) x^v ) &= \sum A(qv) x^v,
\end{align*}
and $\sigma \in Gal(\bb Q_q(\zeta_p) / \bb Q_p(\zeta_p))$ is the Frobenius automorphism. Since formally
\[
D_{l,\Lambda}^{(\infty)}= \frac{1}{\exp \pi F^{(\infty)}(\Lambda, x)} \circ x_l \frac{\partial}{\partial x_l} \circ \exp \pi F^{(\infty)}(\Lambda, x)
\]
the following commutation laws will hold for $l = 1, 2, \ldots, n$:
\begin{equation}\label{E: comm}
q D_{l, \Lambda^q}^{(\infty)} \circ \alpha = \alpha \circ D_{l, \Lambda}^{(\infty)} \quad \text{and} \quad p D_{l, \Lambda^p}^{(\infty)} \circ \alpha_1 = \alpha_1 \circ D_{l, \Lambda}^{(\infty)}.
\end{equation}
 Since the differential operators commute with $\alpha$ (respectively $\alpha_1$) up to the change from $\Lambda$ to $\Lambda^q$ (respectively $\Lambda$ to $\Lambda^p$), we introduce new spaces based on $\Lambda^q$ (respectively $\Lambda^p$) instead of $\Lambda$. If we view $\Lambda^r = (\Lambda^q)^{r/q}$ for $r \in (1/D)\bb Z_{\geq 0}$ then it makes sense to define
\[
W_{\Lambda^q}(r) := (r/q)(1 - l_\sigma(\mu)) = W_\Lambda(r/q) = (1/q)W_\Lambda(r),
\]
where on the right $W_{\Lambda} $ is defined on the further extended lattice  $((1/qD) \bb Z) \times \bb Z^n$ in the modified cone using the defining formula (\ref{E: 4}). So define for $q = p^a$ ($a \in \bb Z_{\geq 0}$, including $a = 0$)
\[
\c O_{0, q} := \left\{ \sum_{r=0}^\infty A(r) \Lambda^{r/D} \pi^{W_{\Lambda^q}(r/D)} \mid A(r) \in \bb Z_q[\tilde \pi], A(r) \rightarrow 0 \text{ as } r \rightarrow \infty \right\}.
\]
 Note that $W_{\Lambda^q}(r) \leq W_\Lambda(r)$ so $\c O_{0,q}$ is an $\c O_0$-algebra. We  define a discrete valuation on $\c O_{0, q}$ by
\[
\left|   \sum_{r=0}^\infty A(r) \Lambda^{r/D} \pi^{W_{\Lambda^q}(r/D)}  \right| := \sup_{r \geq 0} \{ |A(r)| \}.
\]
We also define
\[
\c C_0( \c O_{0, q}) := \left\{ \sum_{v \in M(\bar f, \mu)} \xi(v) \pi^{w(v)} \Lambda^{m(v)} x^v \mid \xi(v) \in \c O_{0,q}, \xi(v) \rightarrow 0 \text{ as } w(v) \rightarrow \infty \right\}
\]
with valuation
\[
\left|     \sum_{v \in M(\bar f, \mu)} \xi(v) \pi^{w(v)} \Lambda^{m(v)} x^v  \right| := \sup_{v \in M(\bar f, \mu)} \{ |\xi(v)| \}.
\]
The reduction map
\[
\sum A(r) \pi^{W_{\Lambda^q}(r/D)} \Lambda^{r/D} \mapsto \sum \bar A(r) \Lambda^{r/D}
\]
gives an isomorphism 
\[
\c O_{0, q} / \pi^{1/(qD)} \c O_{0,q} \rightarrow S_q
\]
where $S_q = \bb F_q[\Lambda^{1/D}]$ but with grading defined by $W_{\Lambda^q}$. 

We replace the total weight $W$ by $W_q$ where $W_q(r; v) := W_{\Lambda^q}(r - m(v)) + w(v)$ taking values in $(1/(qe)) \bb Z_{\geq 0}$. We have the following analogue of our previous results.

\begin{theorem}\label{T: q-version}
Let $D^{(\infty)}_{l, \Lambda^q} = x_l \frac{\partial}{\partial x_l} + \pi x_l \frac{\partial F^{(\infty)}(\Lambda^q, x)}{\partial x_l}$. Let $\Omega^\bullet(\c C_0(\c O_{0,q}), \nabla(D_{\Lambda^q}^{(\infty)}))$ be the complex
\[
\Omega^i := \bigoplus_{1 \leq j_1 < \cdots < j_i \leq n} \c C_0( \c O_{0,q}) \frac{d x_{j_1}}{x_{j_1}} \wedge \cdots \wedge \frac{d x_{j_i}}{x_{j_i}}
\]
with boundary map
\[
\nabla(D_{\Lambda^q}^{(\infty)})(\xi \frac{d x_{j_1}}{x_{j_1}} \wedge \cdots \wedge \frac{d x_{j_i}}{x_{j_i}}) = \left( \sum_{l=1}^n D_{l, \Lambda^q}(\xi) \frac{d x_l}{x_l} \right) \wedge \frac{d x_{j_1}}{x_{j_1}} \wedge \cdots \wedge \frac{d x_{j_i}}{x_{j_i}}.
\]
This complex is acyclic except in top dimension $n$ and $H^n(\Omega^\bullet(\c C_0(\c O_{0, q}), \nabla(D_{\Lambda^q}^{(\infty)})))$ is a free $\c O_{0, q}$-module of rank equal to $n! Vol \> \Delta_\infty(\bar f, \mu)$. Furthermore,
\[
\c C_0(\c O_{0, q}) = \sum_{(m(v); v) \in \bar B} \c O_{0,q} \pi^{w(v)} \Lambda^{m(v)} x^v \oplus \sum_{l=1}^n D_{l, \Lambda^q}^{(\infty)} \c C_0(\c O_{0, q})
\]
where $\bar B$ is the same monomial basis as in Theorem \ref{T: 3.2}.
\end{theorem}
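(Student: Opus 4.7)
The plan is to follow almost verbatim the proof of the preceding theorem in Section \ref{S: 4} (the $\Lambda$-version), reducing everything modulo $\tilde\pi$ to Theorem \ref{T: 3.2} and then lifting back by the standard Dwork--Monsky--Washnitzer successive-approximation argument. Three preliminary facts need to be checked: (a) the operator $D_{l,\Lambda^q}^{(\infty)} = x_l \partial/\partial x_l + \pi x_l \partial F^{(\infty)}(\Lambda^q, x)/\partial x_l$ is a well-defined continuous endomorphism of $\c C_0(\c O_{0,q})$; (b) reduction modulo $\tilde\pi$ identifies $\c C_0(\c O_{0,q})$ with $T$ equipped with the refined grading $W_q$ and coefficient ring $S_q$; and (c) under this reduction $D_{l,\Lambda^q}^{(\infty)}$ becomes the operator of Section \ref{S: 2} after the formal substitution $\Lambda \mapsto \Lambda^q$. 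Items (b) and (c) follow by inspection once one observes that $\c O_{0,q}/\tilde\pi\c O_{0,q} \cong S_q$ and that the Teichm\"uller lifts reduce correctly.

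For (a) the crucial estimate is that every monomial appearing in $\pi x_l \partial F^{(\infty)}(\Lambda^q, x)/\partial x_l$ has $W_q$-weight at least matching its $\c O_{0,q}$-valuation, so that multiplication by it is norm-nonincreasing on $\c C_0(\c O_{0,q})$. This repeats the calculation from the $\Lambda$-case: the $i$-th term of $F^{(\infty)}$ carries the factor $\gamma_i$ with $\mathrm{ord}_p(\pi \gamma_i p^i) \geq p^i - 1 \geq 0$, while the monomials occurring in $F^{\sigma^i}(\Lambda^{qp^i}, x^{p^i})$ have $W_q$-weight bounded by $p^i$ precisely because the definition $W_{\Lambda^q}(r) = (1/q) W_\Lambda(r)$ exactly absorbs the extra $q$-th power of $\Lambda$.

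Granted (a)--(c), the acyclicity and direct-sum decomposition are produced by the usual iteration. Given $\eta \in \c C_0(\c O_{0,q})$, Theorem \ref{T: 3.2} applied with $\Lambda$ replaced by $\Lambda^q$ decomposes its reduction $\bar\eta \in T$ as
\[
\bar\eta = \sum_{(m(v);v) \in \bar B} c_v \Lambda^{m(v)} x^v + \sum_{l=1}^n \bigl(x_l \partial \bar F(\Lambda^q, x)/\partial x_l\bigr)\, \bar\zeta_l;
\]
lifting the $c_v$ and $\bar\zeta_l$ back to $\c O_{0,q}$ and $\c C_0(\c O_{0,q})$, subtracting, and iterating yields a $\tilde\pi$-adically convergent splitting of the required form. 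The passage from this ``$\nabla(\bar F)$-style'' decomposition to the $\nabla(D_{\Lambda^q}^{(\infty)})$-complex, together with acyclicity in degrees below $n$ and freeness and rank of $H^n$, is then obtained from the $\tilde\pi$-adic filtration argument of \cite{AdolpSperb-ExponentialSumsand-1989} or \cite{Monsky-p-adicanalysisand-1970}. The main obstacle I anticipate is the weight bookkeeping in (a): one must verify carefully that the deflation of the $\Lambda$-weight by the factor $q$ (caused by $\Lambda \mapsto \Lambda^q$) is precisely offset by the new grading $W_{\Lambda^q}$, and that no $p$-adic estimates are spoiled by moving to the larger coefficient ring $\c O_{0,q}$. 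Once this bookkeeping is pinned down, no ideas beyond those used in the $\Lambda$-version are required.
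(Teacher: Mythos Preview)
Your proposal is correct and matches the paper's approach: the paper gives no explicit proof for this theorem, stating it simply as ``the following analogue of our previous results,'' i.e.\ the $\Lambda$-version in Section~\ref{S: 4}, which in turn is proved by the Monsky/Adolphson--Sperber lifting argument you describe. Your identification of the key bookkeeping point---that the rescaled weight $W_{\Lambda^q}(r)=(1/q)W_\Lambda(r)$ exactly compensates for the substitution $\Lambda\mapsto\Lambda^q$ so that the estimate $\mathrm{ord}_p(\pi\gamma_i p^i)\ge p^i/(p-1)$ still dominates the $W_q$-weight of the $i$-th term---is precisely the content the paper leaves implicit.
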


To obtain precise information needed concerning the $p$-adic sizes of the entries of the matrix of Frobenius, we proceed with Dwork's approach in the present context. As in \cite{H-S}, for $0 < b \leq p/(p-1)$ and $c \in \bb R$ we define
\begin{align*}
R(b, c) &:= \left\{ \sum_{r = 0}^\infty A(r) \Lambda^{r/D} \mid A(r) \in \bb Q_q(\tilde \pi), ord_p \> A(r) \geq b W_\Lambda(r/D) + c \right\} \\
R(b) &:= \bigcup_{c \in \bb R} R(b, c).
\end{align*}
We define a valuation on $R(b)$ as follows. If $\xi = \sum_{r=0}^\infty A(r) \Lambda^{r/D} \in R(b)$, then
\begin{align*}
ord_p \> \xi &:= \inf_{r \geq 0} \{ ord_p \> A(r) - W_\Lambda(r/D) b \} \\
&= \sup \{ c \in \bb R \mid \xi \in R(b, c) \}.
\end{align*}
Note that
\[
R(b, c) R(b, c') \subset R(b, c+c')
\]
as in \cite{H-S}. Let $\c R$ be any ring with $p$-adic valuation. Set
\[
L(b, c; \c R) := \left\{ \sum_{v \in M(\bar f, \mu)} \xi(v) \Lambda^{m(v)} x^v \mid \xi(v) \in \c R, ord_p \> \xi(v) \geq b w(v) + c \right\}.
\]
In particular, if $\c R = R(b', c')$ (with $c' \geq 0$ so that $\c R$ is a ring), we may write $\xi(v) = \sum_{r \in (1/D) \bb Z_{\geq 0}} A(r, v) \Lambda^r$, and $ord_p \> A(r, v) \geq b' W_\Lambda(r) + c'$ so that in this case
\[
L(b, c; \c R) := \left\{ \sum_{r \in (1/D)\bb Z_{\geq 0}, v \in M(\bar f, \mu)} A(r,v) \Lambda^{r + m(v)} x^v \mid ord_p \> A(r,v) \geq W_\Lambda(r) b' + w(v) b + c \right\}.
\]
This motivates our definition of the spaces $K(b', b; c)$ below. Let $0 < b, b' \leq p/(p-1)$ be rational numbers and let $c \in \bb R$. Define the spaces
\begin{align*}
K(b', b; c) &:= \left\{ \sum_{(r,v) \in \tilde M(\bar F)} A(r, v) \Lambda^r x^v \mid A(r,v) \in \bb Q_q(\tilde \pi), ord_p \> A(r,v) \geq b' W_\Lambda(r - m(v)) + b w(v) + c \right\} \\
K(b', b) &:= \bigcup_{c \in \bb R} K(b', b; c).
\end{align*}

We consider Frobenius maps $\alpha$ and $\alpha_1$ on $\c C_0$ and on $K(b',b)$. Recall,  the Artin-Hasse series $E(t) := \exp\left( \sum_{j=0}^\infty t^{p^j} / p^j \right)$. Let $\pi$ be a zero of $\sum_{j=0}^\infty t^{p^j}/p^j$ satisfying $ord_p(\pi) = 1/(p-1)$. Dwork's infinite splitting function $\theta(t)$ is defined by
\[
\theta(t) := E(\pi t) = \sum_{j=0}^\infty \theta_j t^j,
\]
where it is well-known that the coefficients satisfy $ord_p \theta_j \geq j/(p-1)$. Write
\[
\bar F(\Lambda, x) = \sum_{v \in Supp(\bar f)} \bar A(v) x^v + \Lambda x^\mu \in \bb F_q[\Lambda, x_1^\pm, \ldots, x_n^\pm]
\]
(of course if we are in the case of Section \ref{S: 3} we have $\Lambda^{-1} x^\mu$ instead of $\Lambda x^\mu$). Let
\[
F(\Lambda, x) := \sum_{v \in Supp(\bar f)} A(v) x^v + \Lambda x^\mu \in \bb Z_q[\Lambda, x_1^\pm, \ldots, x_n^\pm]
\]
where  $A(v)$ is the Teichm\"uller lift of $\bar A(v)$ for each $v \in Supp(\bar f)$. It follows from the estimates on the coefficients of $\theta_j$ that
\[
\mathfrak{F}(\Lambda, x) := \theta(\Lambda x^\mu) \cdot \prod_{v \in Supp(\bar f)} \theta(A(v) x^v) \in K(1/(p-1), 1/(p-1); 0) \subset K(b'/p, b/p; 0).
\]
If we set 
\[
\mathfrak{F}_a(\Lambda, x) := \prod_{i=0}^{a-1} \mathfrak{F}^{\sigma^i}(\Lambda^{p^i}, x^{p^i})
\]
then
\[
\mathfrak{F}_a(\Lambda, x) \in K(\frac{p}{q(p-1)}, \frac{p}{q(p-1)}; 0) \subset K(b' / q, b / q; 0).
\]
Since $\mathfrak{F}$ has been defined so that
\[
\mathfrak{F}(\Lambda,x) = \exp(\pi (F^{(\infty)}(\Lambda, x) - F^{(\infty)}(\Lambda^p, x^p)))
\]
and 
\[
 \mathfrak{F}_{a}(\Lambda,x) = \exp(\pi (F^{(\infty)}(\Lambda, x) - F^{(\infty)}(\Lambda^q, x^q)))
\]
then
\[
 \alpha_1 = \sigma^{-1} \circ \psi_p \circ \mathfrak{F}(\Lambda,x), \text{and }  \space \\ \alpha = \psi_q  \circ \mathfrak{F}_a(\Lambda,x).
\]
Note that
\[
\c C_0 \subset K(1/(p-1), 1/(p-1); 0) \subset K(b' / p, b / p)
\]
so that multiplication by $\mathfrak{F}$ takes $\c C_0$, as well as $K(b'/p, b/p)$, into $K(b' / p, b/p)$, and multiplication by $\mathfrak{F}_a$ takes these two spaces into $K(b'/q, b/q)$. It is easy to see 
\[
\psi_p( K(b',b ; c)) \subset K(b', pb ; c)  \qquad \text{and} \qquad \psi_q( K(b', b; c)) \subset K(b', qb; c).
\]
Finally, we note that for $b' $ and $b \geq 1/(p-1)$, $K(b'/q, b; 0) \subset \c C_0(\c O_{0,q})$. Therefore, since $\psi_p$ acts on the $x$-variables, then  $\alpha_1 = \sigma^{-1} \circ \psi_p \circ \mathfrak{F}(\Lambda, x)$ maps $\sigma^{-1}$-semilinearly $\c C_0(\c O_0)$  into $\c C_0(\c O_{0,p})$, and it maps $\sigma^{-1}$-semilinearly $K(b', b; c)$  into $K(b'/p, b; c)$. Similarly, since $\alpha = \psi_q \circ \mathfrak{F}_a(\Lambda, x)$ then $\alpha$ maps $\c C_0(\c O_0)$ into $\c C_0(\c O_{0, q})$ linearly over $\bb Z_q[\tilde \pi]$, as well as $K(b', b; c)$ into $K(b'/q, b; c)$.
We may use $\alpha_1$ and $\alpha$ to define chain maps as follows. Let
\begin{align}\label{E: Frob}
\text{Frob}_{\Lambda}^i &:= \bigoplus_{1 \leq j_1 < \cdots < j_i \leq n} q^{n-i}  \alpha \frac{dx_{j_1}}{x_{j_1}} \wedge \cdots \wedge \frac{dx_{j_i}}{x_{j_i}}, \\
\text{Frob}_{1, \Lambda}^i &:= \bigoplus_{1 \leq j_1 < \cdots < j_i \leq n} p^{n-i}  \alpha_1 \frac{dx_{j_1}}{x_{j_1}} \wedge \cdots \wedge \frac{dx_{j_i}}{x_{j_i}}. \notag
\end{align}
Then the commutation rules (\ref{E: comm}) ensure that this defines chain maps
\[
\begin{CD}
\Omega^\bullet( \c C_0( \c O_0), \nabla(D^{(\infty)}_\Lambda)) @>{\text{Frob}_{1,\Lambda}^\bullet}>> \Omega^\bullet(\c C_0(\c O_{0, p}), \nabla(D^{(\infty)}_{\Lambda^p}))
\end{CD}
\]
and
\[
\begin{CD}
\Omega^\bullet( \c C_0( \c O_0), \nabla(D^{(\infty)}_\Lambda)) @>{\text{Frob}_{\Lambda}^\bullet}>> \Omega^\bullet(\c C_0(\c O_{0, q}), \nabla(D^{(\infty)}_{\Lambda^q})).
\end{CD}
\]
Note that since $\psi_p$, respectively $\psi_q$, acts only on the $x$ variables, ${Frob}_{1,\Lambda}^\bullet$, respectively ${Frob}_{\Lambda}^\bullet$ is a chain map of $\sigma^{-1}$-semilinear maps over $\c O_0$, respectively linear maps over $\c O_0$.
All the complexes above are acyclic except possibly in dimension $n$. Note that for all $b$ such that $0 < b \leq p / (p-1)$, $\pi x_i \frac{\partial F^{(\infty)}}{\partial x_i}(\Lambda^q, x)$ belongs to $K(b/q, b; - \tilde e)$ where $\tilde e := b - \frac{1}{p-1}$. We are now in a position to prove a sequence of results as in \cite{H-S} leading to explicit estimates for the matrix of the relative Frobenius map with respect to the basis $ \bar B$. The proofs here are only slight modifications of the proofs of the results in \cite{H-S}. We define first the following space 
\[
W(b/q, b; c) := \left\{ \sum_{r \in (1/D)\bb Z_{\geq 0}, (m(v); v) \in \bar B} A(r; v) \Lambda^r ( \Lambda^{m(v)} x^v) \Bigg|
\begin{split}
&A(r; v) \in \bb Q_q(\tilde \pi) \text{ such that} \\
&ord_p A(r; v) \geq b \left( W_{\Lambda^q}(r) + w(v) \right) + c
\end{split}
\right\}.
\]
In particular,
\[
W(b/q, b; 0) = K(b/q, b; 0) \cap \bigoplus_{v \in B} \c O_{0, q} \pi^{w(v)} \Lambda^{m(v)} x^v.
\]

\begin{theorem}
(cf. Theorem 3.6 of \cite{H-S}) Let $0 < b \leq p / (p-1)$ and $c \in \bb R$. Then the following equality holds for any $q$ (a power of $p$)
\[
K(b/q, b; c) = W(b/q, b; c) + \sum_{i=1}^n \pi x_i \frac{\partial F}{\partial x_i}(\Lambda^q, x) K(b/q, b; c + \tilde e).
\]
\end{theorem}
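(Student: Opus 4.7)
The plan is to establish both containments, with $\supseteq$ being routine and $\subseteq$ the substantive direction. For $\supseteq$: the inclusion $W(b/q, b; c) \subseteq K(b/q, b; c)$ is immediate from the defining estimates, while the containment $\pi x_i \partial F/\partial x_i(\Lambda^q, x) \in K(b/q, b; -\tilde e)$ (a special case of the estimate noted in the text for $F^{(\infty)}$) combined with the product rule $K(b/q, b; \alpha) \cdot K(b/q, b; \beta) \subseteq K(b/q, b; \alpha + \beta)$ handles the sum on the right.

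For $\subseteq$, I would follow closely the $p$-adic successive approximation argument used in the proof of Theorem 3.6 of \cite{H-S}. Given $\xi \in K(b/q, b; c)$, the plan is to construct recursively sequences $w^{(k)} \in W(b/q, b; c + k\tilde e)$ and $\eta^{(k)}_i \in K(b/q, b; c + (k+1)\tilde e)$ so that the partial residues
\[
\xi^{(N+1)} := \xi - \sum_{k=0}^{N} w^{(k)} - \sum_{i=1}^n \pi x_i \frac{\partial F}{\partial x_i}(\Lambda^q, x) \sum_{k=0}^{N} \eta^{(k)}_i
\]
satisfy $\xi^{(N+1)} \in K(b/q, b; c + (N+1)\tilde e)$. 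In the range where $\tilde e > 0$ (i.e., $b > 1/(p-1)$), these residues tend to $0$ $p$-adically, and the partial sums converge in the respective Banach spaces to the desired $w \in W(b/q, b; c)$ and $\eta_i \in K(b/q, b; c + \tilde e)$.

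For the single step, I would take a residue $\xi^{(k)} \in K(b/q, b; c + k\tilde e)$, rescale it by the appropriate power of $\tilde \pi$ so that its reduction modulo $\tilde \pi$ lies in $T$ compatibly with the $W_q$-grading, and apply Theorem \ref{T: q-version} to decompose this reduction as a combination $\bar w^{(k)}$ of the monomial basis $\bar B$ plus terms $\sum_l D^{(\infty)}_{l, \Lambda^q}(\bar \eta^{(k)}_l)$. Teichm\"uller-type lifts $w^{(k)}$ and $\eta^{(k)}_l$ to characteristic zero satisfy $\xi^{(k)} - w^{(k)} - \sum_l D^{(\infty)}_{l, \Lambda^q}(\eta^{(k)}_l) \in K(b/q, b; c + (k+1)\tilde e)$ by construction. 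Replacing $D^{(\infty)}_{l, \Lambda^q} = x_l \partial/\partial x_l + \pi x_l \partial F^{(\infty)}/\partial x_l(\Lambda^q, x)$ by $\pi x_l \partial F/\partial x_l(\Lambda^q, x)$ introduces two error terms: $x_l \partial \eta^{(k)}_l/\partial x_l$, which lies automatically in $K(b/q, b; c + (k+1)\tilde e)$ since $x_l \partial/\partial x_l$ preserves the $K$-filtration (it multiplies a monomial $\Lambda^r x^v$ by $v_l \in \bb Z$); and $\pi x_l \partial(F^{(\infty)} - F)/\partial x_l(\Lambda^q, x) \cdot \eta^{(k)}_l$, whose $\pi$-weight is strictly improved due to the factor $\gamma_i p^i$ in each term of the tail $\sum_{i \geq 1} \gamma_i F^{\sigma^i}(\Lambda^{qp^i}, x^{p^i})$.

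The main obstacle will be the careful $p$-adic bookkeeping required to verify that every correction introduced by the replacement of $D^{(\infty)}_{l, \Lambda^q}$ by $\pi x_l \partial F/\partial x_l(\Lambda^q, x)$ indeed lies in $K(b/q, b; c + (k+1)\tilde e)$ and not in a weaker space, so that the recursion strictly improves the residue by the full shift $\tilde e$ at every stage. The crucial estimate is $ord_p(\pi \gamma_i p^i) = p^{i+1}/(p-1) - 1 \geq \tilde e$ for $i \geq 1$ and $b \leq p/(p-1)$, together with the fact that the scalings $\Lambda \mapsto \Lambda^{p^i}$, $x \mapsto x^{p^i}$ multiply the weights by $p^i \geq 1$. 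These are essentially the same computations carried out in \cite[\S 3]{H-S}, adapted here to the composite weight $W_q(r;v) = W_{\Lambda^q}(r - m(v)) + w(v)$ used in place of the simpler weight of \cite{H-S}.
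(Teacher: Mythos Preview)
Your approach inverts the natural logical order. This theorem---the one involving $F$ alone---is the most primitive of the three results listed; the versions with $F^{(\infty)}$ and then with $D_{l,\Lambda^q}^{(\infty)}$ are deduced \emph{from} it, not the other way round. Routing through Theorem~\ref{T: q-version} (which concerns $D^{(\infty)}_{l,\Lambda^q}$ on $\c C_0(\c O_{0,q})$), reducing modulo $\tilde\pi$, and then stripping off both the derivation $x_l\partial/\partial x_l$ and the tail $F^{(\infty)}-F$ is a detour that forces you into the bookkeeping you flag as the ``main obstacle''. There is also a genuine gap: your recursion advances by $\tilde e = b - 1/(p-1)$ at each step and therefore converges only when $\tilde e>0$, i.e.\ for $b>1/(p-1)$, whereas the statement is asserted for the full range $0<b\le p/(p-1)$. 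A second difficulty is that ``rescale by an appropriate power of $\tilde\pi$ and reduce'' is not well-posed on a general $\xi\in K(b/q,b;c)$, whose support meets infinitely many $W_q$-weights and so admits no single rescaling.

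The argument the paper intends (as in \cite[Theorem~3.6]{H-S} and ultimately \cite{AdolpSperb-ExponentialSumsand-1989}) is direct and works uniformly in $b$. One uses the \emph{graded} characteristic~$p$ decomposition for $\bar F(\Lambda^q,x)$---the $\Lambda^q$-analogue of the first display in Theorem~\ref{T: 3.2}, built from the homogeneous operators $x_l\,\partial\bar F(\Lambda^q,x)/\partial x_l$ (not the inhomogeneous $D_l$). Because multiplication by $x_l\,\partial\bar F(\Lambda^q,x)/\partial x_l$ raises $W_q$-weight by exactly~$1$, lifting this decomposition to $\bb Z_q$ and iterating on the resulting $p$-error gives, for each monomial $\Lambda^r x^v$ of $W_q$-weight $\rho$, an identity
\[
\Lambda^r x^v \;=\; w_{r,v} \;+\; \sum_{l=1}^n x_l\,\frac{\partial F}{\partial x_l}(\Lambda^q,x)\,\eta_{l,r,v}
\]
with $\bb Z_q$-integral coefficients, $w_{r,v}$ supported on basis monomials of $W_q$-weight $\le\rho$, and each $\eta_{l,r,v}$ supported on monomials of $W_q$-weight $\le\rho-1$. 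Summing against the coefficients $A(r,v)$ of $\xi\in K(b/q,b;c)$, the weight drop of $1$ on the $\eta$-side yields a gain of $b$ in $ord_p$, so $\eta_l\in K(b/q,b;c+b)$; absorbing a factor of $\pi$ converts $c+b$ to $c+b-1/(p-1)=c+\tilde e$, giving exactly $\pi^{-1}\eta_l\in K(b/q,b;c+\tilde e)$ in a single stroke. No $\tilde e$-iteration is required, and no hypothesis on the sign of $\tilde e$ enters.
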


It is useful to use the notation 
\[
W_{\Lambda^q}(r + m(v), v) := W_{\Lambda^q}(r) + w(v)
\]
for the $\Lambda^q$-total weight of the monomial $\Lambda^{r + m(v)} x^v$. Recall $F^{(\infty)}(\Lambda, x) = \sum_{i = 0}^\infty \gamma_i F^{\sigma^i}(\Lambda^{p^i}, x^{p^i})$ where $ord_p(\gamma_i) = \frac{p^{i+1}-1}{p-1} - (i+1)$ for $i \geq 0$. Recall also 
\[
D_{l, \Lambda^q}^{(\infty)} := x_l \frac{\partial}{\partial x_l} + \pi x_l \frac{\partial F^{(\infty)}}{\partial x_l}(\Lambda^q, x).
\]

\begin{theorem}
For $b$ a rational number satisfying $1/(p-1) < b < p/(p-1)$, $c \in \bb R$, and $q$ a power of $p$, we have:
\begin{enumerate}
\item (cf. Theorem 3.7 of \cite{H-S}) 
\[
K(b/q, b; c) = W(b/q, b; c) + \sum_{l=1}^n \pi x_l \frac{\partial F^{(\infty)}}{\partial x_l}(\Lambda^q, x) K(b/q, b; c+\tilde e).
\]
\item (cf. Theorem 3.8 of \cite{H-S}) 
\[
K(b/q, b; c) = W(b/q, b; c) + \sum_{l=1}^n D_{l, \Lambda^q}^{(\infty)} K(b/q, b; c+ \tilde e).
\]
\item  (cf. Theorem 3.9 of \cite{H-S})
\[
K(b/q, b) = W(b/q, b) \oplus \sum_{l=1}^n D_{l, \Lambda^q}^{(\infty)} K(b/q, b).
\]
\end{enumerate}
\end{theorem}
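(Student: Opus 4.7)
The plan is to establish (1), (2), (3) in sequence, bootstrapping from the previous theorem (the analogous statement with $F$ in place of $F^{(\infty)}$) and exploiting the strict contraction $\tilde e = b - 1/(p-1) > 0$ afforded by the hypothesis $b > 1/(p-1)$. The proofs will closely parallel Theorems 3.7--3.9 of \cite{H-S}, with only the bookkeeping adjusted to carry the $\Lambda^q$-weight and the $\tilde M(\bar F)$-monomial structure through the argument.

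For (1), I would expand
\[
\pi x_l \frac{\partial F^{(\infty)}}{\partial x_l}(\Lambda^q, x) = \pi x_l \frac{\partial F}{\partial x_l}(\Lambda^q, x) + \sum_{i \geq 1} \pi \gamma_i p^i \, x_l^{p^i} \left(\frac{\partial F^{\sigma^i}}{\partial t_l}\right)(\Lambda^{qp^i}, x^{p^i})
\]
using $F^{(\infty)}(\Lambda, x) = \sum_{i \geq 0} \gamma_i F^{\sigma^i}(\Lambda^{p^i}, x^{p^i})$. The estimates $ord_p(\pi \gamma_i p^i) = p^{i+1}/(p-1) - 1 \to \infty$ and the scaling of the $\Lambda^q$-weight by $p^i$ ensure that each correction sends $K(b/q, b; c + \tilde e)$ into $K(b/q, b; c + \tilde e + \delta_i)$ for some $\delta_i > 0$. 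Applying the preceding theorem writes any $\xi \in K(b/q, b; c)$ as $w_0 + \sum_l \pi x_l \partial F^{(\infty)}/\partial x_l(\Lambda^q, x) \cdot \eta_{l, 0} + \rho_0$, where $\rho_0 \in K(b/q, b; c + \tilde e + \delta)$ for a uniform $\delta > 0$. Iterating on $\rho_0$ and summing the resulting geometric series in the Banach topology of $K(b/q, b)$ yields (1).

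For (2), I would use $D_{l, \Lambda^q}^{(\infty)} = x_l \partial/\partial x_l + \pi x_l \partial F^{(\infty)}/\partial x_l(\Lambda^q, x)$ to rewrite the decomposition in (1) as
\[
\xi = w_0 + \sum_{l=1}^n D_{l, \Lambda^q}^{(\infty)}(\eta_{l,0}) - \sum_{l=1}^n x_l \frac{\partial \eta_{l,0}}{\partial x_l}.
\]
The Euler operator $x_l \partial/\partial x_l$ preserves each space $K(b/q, b; c)$ (coefficient $p$-adic sizes are unchanged and the monomial structure is preserved), so the remainder lies in $K(b/q, b; c + \tilde e)$. Apply (1) to this remainder and repeat. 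The $k$-th correction lives in a space with $c$-parameter shifted by $k\tilde e$, so the alternating series $w = \sum_k (-1)^k w_k$ and $\eta_l = \sum_k (-1)^k \eta_{l,k}$ converge and produce the desired $D^{(\infty)}$-decomposition.

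For (3), taking the union over $c$ in (2) delivers surjectivity. The main obstacle, and the only genuinely delicate point, is directness of the sum. Suppose $w + \sum_l D_{l, \Lambda^q}^{(\infty)}(\eta_l) = 0$ with $w \in W(b/q, b)$ and $\eta_l \in K(b/q, b)$; I must show $w = 0$. The natural strategy is to work modulo successive powers of $\tilde \pi$ and invoke the characteristic-$p$ direct sum decomposition
\[
\c C_0(\c O_{0, q}) = \bigoplus_{(m(v); v) \in \bar B} \c O_{0, q} \pi^{w(v)} \Lambda^{m(v)} x^v \oplus \sum_{l=1}^n D_{l, \Lambda^q}^{(\infty)} \c C_0(\c O_{0, q})
\]
of Theorem \ref{T: q-version}. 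Taking the lowest-order $\tilde \pi$-piece of $w$ and each $\eta_l$ forces a relation in the reduction, which must vanish by Theorem \ref{T: q-version}; peeling off successive $\tilde \pi$-layers and using the completeness of the Banach structure on $K(b/q, b)$ then forces $w = 0$.
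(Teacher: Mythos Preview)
Your proposal is correct and follows exactly the route the paper indicates: the paper gives no independent argument here, deferring entirely to Theorems 3.7--3.9 of \cite{H-S} with the remark that only slight modifications (the $\Lambda^q$-weight bookkeeping on $\tilde M(\bar F)$) are needed, and your sketch of the iteration in (1) and (2) via the contraction $\tilde e = b - 1/(p-1) > 0$ is that standard argument. One small clarification on (3): the decomposition you quote from Theorem~\ref{T: q-version} is already a $p$-adic direct sum in $\c C_0(\c O_{0,q})$, not a characteristic-$p$ statement; since $K(b/q, b; 0) \subset \c C_0(\c O_{0,q})$ for $b > 1/(p-1)$, after scaling a hypothetical relation $w = \sum_l D_{l,\Lambda^q}^{(\infty)} \eta_l$ into $K(b/q, b; 0)$ you may invoke that direct sum once to conclude $w = 0$, without the mod-$\tilde\pi$ layer-peeling.
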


We are now able to give the following estimates for the relative Frobenius map. 

\begin{theorem}\label{T: FrobEst}
Let $b$ be a rational number, $1/(p-1) < b \leq p/(p-1)$. If we write for each  $ u \in B$, i.e., $\Lambda^{m(u)} x^u \in \bar B$,
\[
\alpha_1( \Lambda^{m(u)} x^u) = \sum_{v \in B} A(u, v) \Lambda^{m(v)} x^v \quad \text{mod } \sum_{l=1}^n D_{l, \Lambda^p}^{(\infty)} K(b/p, b),
\]
then
\[
A(u, v) \in R(b/p; (b/p)(p w(v) - w(u))).
\]
\end{theorem}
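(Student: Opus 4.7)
The plan is to compute $\alpha_1(\Lambda^{m(u)} x^u)$ by passing the monomial through each of the three factors of $\alpha_1 = \sigma^{-1} \circ \psi_p \circ \mathfrak{F}(\Lambda, x)$ while tracking the mixed filtration $K(b', b; c)$, and then to read off the coefficients $A(u, v)$ from the direct sum decomposition provided by the preceding theorem. Each factor shifts exactly one of the three parameters in a controlled way, so once the initial monomial is correctly located in the filtration, the target estimate drops out.

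The monomial $\Lambda^{m(u)} x^u$ has a coefficient of $1$, and since $W_\Lambda(m(u) - m(u)) = 0$ the defining inequality of $K(b/p, b/p; c)$ collapses to $c \leq -(b/p) w(u)$, placing it in $K(b/p, b/p; -(b/p) w(u))$. Using $\mathfrak{F} \in K(1/(p-1), 1/(p-1); 0) \subset K(b/p, b/p; 0)$ together with the product rule
\[
K(b', b; c_1) \cdot K(b', b; c_2) \subset K(b', b; c_1 + c_2)
\]
— valid when $b' \leq b$, via linearity of $W_\Lambda$, the subadditivity $m(v_1) + m(v_2) \geq m(v_1 + v_2)$ established in the proof of Theorem \ref{T: 3.1}, and the identity $w(v) = l_\sigma(v) + m(v)(1 - l_\sigma(\mu))$ — I obtain $\mathfrak{F}(\Lambda, x) \cdot \Lambda^{m(u)} x^u \in K(b/p, b/p; -(b/p) w(u))$. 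The operator $\psi_p$ sends $K(b', b; c)$ into $K(b', pb; c)$, moving the second parameter from $b/p$ to $b$, and the outer $\sigma^{-1}$ preserves $p$-adic valuations. Hence
\[
\alpha_1(\Lambda^{m(u)} x^u) \in K(b/p, b; -(b/p) w(u)).
\]

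The hypothesis $b > 1/(p-1)$ guarantees $\tilde e > 0$, so part (2) of the preceding theorem, applied with $q = p$ and $c = -(b/p) w(u)$, yields a decomposition $\alpha_1(\Lambda^{m(u)} x^u) = w_0 + \sum_l D_{l, \Lambda^p}^{(\infty)} \eta_l$ with $w_0 \in W(b/p, b; -(b/p) w(u))$ and $\eta_l \in K(b/p, b; -(b/p) w(u) + \tilde e)$. Uniqueness in the direct sum decomposition of part (3) identifies $w_0$ with the $W$-component $\sum_v A(u, v) \Lambda^{m(v)} x^v$. Writing $A(u, v) = \sum_{r} A(r; u, v) \Lambda^r$, the condition $w_0 \in W(b/p, b; -(b/p) w(u))$ unpacks via $W_{\Lambda^p}(r) = W_\Lambda(r)/p$ to
\[
ord_p A(r; u, v) \geq b W_{\Lambda^p}(r) + b w(v) - (b/p) w(u) = (b/p) W_\Lambda(r) + (b/p)(p w(v) - w(u)),
\]
which is precisely $A(u, v) \in R(b/p; (b/p)(p w(v) - w(u)))$. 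The step requiring the most care is the product estimate in the preceding paragraph: the multiplicativity of the $K(b', b; c)$-filtration is not automatic but rests on the precise interaction between $W_\Lambda$, $w$, and $m$ on the extended monoid $\tilde M(\bar F)$.
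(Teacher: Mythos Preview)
Your proof is correct and follows essentially the same route as the paper's own argument: locate $\Lambda^{m(u)}x^u$ in $K(b/p,b/p;-(b/p)w(u))$, multiply by $\mathfrak{F}$, apply $\psi_p$ and $\sigma^{-1}$ to land in $K(b/p,b;-(b/p)w(u))$, then use the decomposition theorem immediately preceding to read off the $R$-estimate on the coefficients. The paper keeps two parameters $b,b'$ separate until the last line before setting $b=b'$, whereas you take $b'=b$ from the outset; your added justification of the multiplicativity $K(b',b;c_1)\cdot K(b',b;c_2)\subset K(b',b;c_1+c_2)$ via the identity $w(v_1)+w(v_2)-w(v_1+v_2)=W_\Lambda(m(v_1)+m(v_2)-m(v_1+v_2))$ is a welcome clarification the paper leaves implicit.
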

\begin{proof} For each $u \in B$ the monomial $\Lambda^{m(u)} x^u$ belongs to $K(b'/p,b/p, -w(u)(b/p))$ where $b'$ and $b$ are rational numbers $1/(p-1)<b,b' \leq p/((p-1)$. $\mathfrak F \in K(1/(p-1), 1/(p-1), 0)$ implies that $\mathfrak F \Lambda^{m(u)} x^u \in K(b'/p,b/p, -w(u)(b/p))$. It follows that $\alpha_1(\Lambda^{m(u)} x^u) \in K(b'/p,b,-w(u)(b/p))$. Writing 
\[
\alpha_1(\Lambda^u x^u) = \sum_{v \in B} \left(\sum_{r \in (1/D){\bb Z}_{\geq 0}} A_r(u,v) \Lambda^r \right) \Lambda^{m(v)} x^v + \eta
\]
with 
\[
\sum_{v \in B} \left(\sum_{r \in (1/D){\bb Z}_{\geq 0}} A_r(u,v) \Lambda^r \right) \Lambda^{m(v)} x^v \in W(b'/p,b,-w(u)(b/p)) 
\]
and
\[
   \eta \in  \sum_{l=1}^n D_{l, \Lambda^q}^{(\infty)} K(b'/p, b; -w(u)(b/p)+ \tilde e)
\]
so that $ord_p(A_r(u,v) \geq b'W_{\Lambda^p}(r) + bw(v) -(b/p)w(u)$. For $b = b'$, $ord_p(A_r(u,v) \geq (b/p)(W_{\Lambda}(r) + pw(v) -w(u)).$  These estimates imply $ \sum (A_r(u,v) \Lambda^r)$ belongs to $R(b/p; (b/p)(p w(v) - w(u))).$
\end{proof}
Taking $b= b' = p/(p-1)$ the above estimates give us the following estimates for the matrix of the Frobenius map $H^n(Frob_{1,\Lambda})$ from $H^n(\Omega^\bullet( \c C_0( \c O_0), \nabla(D^{(\infty)}_\Lambda)))$ to $H^n(\Omega^\bullet( \c C_0( \c O_{0,p}), \nabla(D^{(\infty)}_{\Lambda^p})))$ with respect to the bases $ \bar B_{\Lambda}=\{ \Lambda^{m(u)} x^u \}_{u \in B} $ of $H^n(\Omega^\bullet( \c C_0( \c O_0), \nabla(D^{(\infty)}_\Lambda)))$ and $\bar B_{{\Lambda}^p}=\{ \Lambda^{pm(u)} x^u \}_{u \in B} $ of $H^n(\Omega^\bullet( \c C_0( \c O_{0,p}), \nabla(D^{(\infty)}_{\Lambda^p})))$.

\begin{corollary}\label{C: Frob est1}
 Let $\c A(\Lambda) = (\c A(u,v)(\Lambda))$ be the matrix  (with entries in $\c O_{0,p}$) of  $H^n(Frob_{1,\Lambda})$ with respect to the bases $\bar B_{\Lambda}$ and $\bar B_{{\Lambda}^p} $. Then $\c A(u,v)(\Lambda) \in R(1/(p-1); 1/(p-1)(pw(v)-w(u))$. In particular, $ord_p(\c A(u,v)(\Lambda)) \geq (pw(v)-w(u))/(p-1)$.
\end{corollary}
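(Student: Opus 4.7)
The plan is to derive this corollary as a direct specialization of Theorem~\ref{T: FrobEst} at the maximum allowed value $b = b' = p/(p-1)$. At this endpoint $b/p = 1/(p-1)$, which is exactly the growth rate appearing in the statement of the corollary.

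First I would verify that the matrix entries $\c A(u,v)(\Lambda)$ of the induced Frobenius map $H^n(\mathrm{Frob}_{1,\Lambda})$ with respect to $\bar B_\Lambda$ and $\bar B_{\Lambda^p}$ are the coefficients $A(u,v)(\Lambda)$ appearing in the decomposition
\[
\alpha_1(\Lambda^{m(u)} x^u) \equiv \sum_{v \in B} A(u,v)(\Lambda)\, \Lambda^{m(v)} x^v \pmod{\sum_{l=1}^n D^{(\infty)}_{l,\Lambda^p} K(b/p, b)}
\]
provided by Theorem~\ref{T: FrobEst}. This identification is legitimate because part~(3) of the preceding theorem gives the direct-sum decomposition $K(b/p, b) = W(b/p, b) \oplus \sum_l D^{(\infty)}_{l,\Lambda^p} K(b/p, b)$, and Theorem~\ref{T: q-version} tells us that the monomial set $\bar B$ generates $H^n$ freely over the appropriate scalar ring; hence the $A(u,v)$ are, up to the basis identification built into $\bar B_{\Lambda^p}$, precisely the entries of the Frobenius matrix.

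Plugging $b = p/(p-1)$ into Theorem~\ref{T: FrobEst} then gives
\[
\c A(u,v)(\Lambda) \in R\!\left(\tfrac{1}{p-1};\ \tfrac{1}{p-1}(p w(v) - w(u))\right),
\]
which is the first claim. For the ``in particular'' clause, I would unpack the definitions of $R(b;c)$ and the associated valuation on $R(b)$: membership in $R(b; c)$ means that every coefficient $A(r)$ in the expansion $\xi = \sum_{r \geq 0} A(r)\Lambda^{r/D}$ satisfies $ord_p A(r) \geq b\, W_\Lambda(r/D) + c$, and the $R(b)$-valuation $ord_p \xi = \inf_r\bigl(ord_p A(r) - b\, W_\Lambda(r/D)\bigr)$ is therefore bounded below by $c$. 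Applied with $b = 1/(p-1)$ and $c = (p w(v) - w(u))/(p-1)$, this yields the estimate $ord_p \c A(u,v)(\Lambda) \geq (p w(v)-w(u))/(p-1)$.

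The main obstacle is not computational but notational: one must check that the bookkeeping identification of the Frobenius-matrix coefficients with the decomposition coefficients of Theorem~\ref{T: FrobEst} is consistent under the basis change from $\{\Lambda^{m(v)} x^v\}$ appearing in the theorem's mod-boundary expansion to $\bar B_{\Lambda^p} = \{\Lambda^{pm(v)} x^v\}$ used in the corollary. Once that identification is made, the $p$-adic growth content of the corollary is entirely inherited from the preceding theorem, and no new estimates need be proven.
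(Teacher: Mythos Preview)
Your proposal is correct and follows exactly the paper's approach: the paper derives the corollary simply by specializing Theorem~\ref{T: FrobEst} at $b = b' = p/(p-1)$, so that $b/p = 1/(p-1)$, and your additional unpacking of the $R(b;c)$-valuation to justify the ``in particular'' clause, as well as your flagging of the $\bar B_{\Lambda}$ versus $\bar B_{\Lambda^p}$ bookkeeping, goes slightly beyond what the paper makes explicit.
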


\section{Lower-order deformations}\label{S: 6}

With an eye directed at some future applications we combine here the isobaric deformations described above with the lower-order deformation technique of our earlier work \cite{H-S}. In order to have a greater applicability it is useful to modify the work above as follows. We consider the following variant of the deformation above. Let

\[
\bar G(\Lambda, x) = \bar F(\Lambda^M, x) = \bar f(x) + \Lambda^M x^\mu.
\]
(We will consider here the case with $l_\sigma(\mu) < 1$; the case with $l_\sigma(\mu) > 1$ is entirely analogous.) We continue to work under the hypotheses H(i) - H(v) given in Section \ref{S: 1}. As in Section \ref{S: 1}, set $\Delta_\infty(\bar f, \mu) = $ the convex closure of $\{ 0 \} \cup Supp(\bar f) \cup \{ \mu \}$. Denote by $Cone(\bar f, \mu)$ the cone over $\Delta_\infty(\bar f, \mu)$, and $M(\bar f, \mu) = Cone(\bar f, \mu) \cap \bb Z^n$. For $v \in M(\bar f, \mu)$, denote by $w(v)$ the polyhedral weight defined on $M(\bar f, \mu)$ by the polyhedron $\Delta_\infty(\bar f, \mu)$. 

The changes in the preceding work resulting from the change replacing $\Lambda$ with $\Lambda^M$ are not major and occur mostly in the work on the total space in $\bb R^{n+1}$ and the consequent effects on the $p$-adic theory. We consider $\Delta_\infty(\bar G) = $ convex closure of $\{ 0 \} \cup Supp(\bar G)$ in $\bb R^{n+1}$. Let $Cone(\bar G)$ be the cone in $\bb R^{n+1}$ over $\Delta_\infty(\bar G)$. In $\bb R^{n+1}$, $\bar G$ is quasi-homogeneous and nondegenerate with respect to $\Delta_\infty(\bar G)$ just as in Section \ref{S: 2} above. In fact all elements $(r, v)$ of $Supp(\bar G)$ satisfy $W_{\bar G}(r, v) = 1$ where
\[
W_{\bar G}(\Lambda, x) := l_\sigma(x) + \Lambda(1 - l_\sigma(\mu)) / M.
\]
We assume that the rational coefficients of $W_{\bar G}(\Lambda, x)$ all lie in $(1 / \tilde d) \bb Z$, $\tilde d > 0$. 

The hyperplane $W_{\bar G}(\Lambda, x) = 1$ meets the positive $\Lambda$-coordinate axis at the point $P_0 = ( \frac{M}{1 - l_\sigma(\mu)}, 0, \ldots, 0)$.  Set $\Delta_\infty(\bar G, P_0) := $ convex closure of $\Delta_\infty(\bar G) \cup \{P_0\}$, and $Cone(\bar G, P_0) := $ cone in $\bb R^{n+1}$ over $\Delta_\infty(\bar G, P_0)$. The boundary faces of $Cone(\bar G, P_0)$ are given by $\Lambda = 0$ and 
\[
\Lambda M^{-1} \phi^{(\tau)}(\mu) - \phi^{(\tau)}(x) = 0
\]
where $\phi^{(\tau)}(x)$ is the integral form such that the affine subspace of $\bb R^n$ spanned by $\tau$, a codimension one face of $\Delta(\bar f)$ is given by the equation $\phi^{(\tau)}(x) = 0 $ . Then the modified cone $Cone(\bar G, P_0)$ is described by the simultaneous inequalities
\[
\begin{cases}
\Lambda \geq 0 \\
\Lambda \geq M \phi^{(\tau)}(x) / \phi^{(\tau)}(\mu) & \text{for } \tau \in \Gamma_1,\\
\phi^{(\tau)}(x) \geq 0 & \text{for } \tau \in \Gamma - \Gamma_1
\end{cases}
\]
where we recall that $\Gamma$ is the collection of all codimension one faces of $\Delta(\bar f)$, and $\Gamma_1$ are those visible from $\mu$.

For each $\tau \in \Gamma_1$, let $\tilde D(\tau, M) := \frac{\phi^{(\tau)}(\mu)}{gcd(\phi^{(\tau)}(\mu), M)}$. Let $\tilde D$ be the least common multiple of $\{ \tilde D(\tau, M)\}_{\tau \in \Gamma_1}$. For any $u \in M(\bar f, \mu)$ we define $m_M(u) := M \phi^{(\tau)}(u) / \phi^{(\tau)}(\mu) \in (1 / \tilde D)\bb Z_{\geq 0}$. Then the extended monoid of $\bar G$  is $\tilde M(\bar G) = (\tilde D^{-1} \bb Z \times \bb Z^n) \cap Cone(\bar G, P_0)$. Setting $\tilde e := \tilde d \tilde D$, then $W_{\bar G}(\tilde M(\bar G)) \subset \tilde e^{-1} \bb Z_{\geq 0}$.

With these changes in place, we now proceed to lower order deformations of the type studied in \cite{H-S}. Let $\bar P$ be the Laurent polynomial
\[
\bar P(T, \Lambda, x) = \sum \bar p(\gamma; r; v) T^\gamma \Lambda^r x^v \in \bb F_q[T_1^\pm, \ldots, T_s^\pm, \Lambda, x_1^\pm, \ldots, x_n^\pm].
\]
We consider the family
\[
\bar H(T, \Lambda, x) := \bar G(\Lambda, x) + \bar P(T, \Lambda, x)
\]
For each monomial $\kappa = T^\gamma \Lambda^r x^u  = (\gamma; r; u) \in Supp(\bar P)$ we say the $x$-support of $\kappa$ is $u$, and $(r; u)$ is the $\Lambda$-$x$- support of $\kappa$. We assume for each such monomial $\kappa$ in $Supp(\bar P)$, that the $x$-support $u$ is in $Cone(\bar f, \mu)$ and the $\Lambda$-$x$- support $(r; u)$ of $\kappa$ is in $Cone(\bar G, P_0)$ and satisfies $W_{\bar G}(r; u) < 1$. The latter requirement is why we call this a lower order deformation.

In this case we construct a relative polytope $\Upsilon$ in $\bb R^s$ as in \cite{H-S}. $\Upsilon$ is defined as the convex hull of the points
\[
\{0\} \cup \{ \left(\frac{1}{1 - W_{\bar G}(r; u)} \right) \gamma \mid (\gamma; r; u) \in Supp(\bar P) \}.
\]
Let $Cone(\Upsilon)$ be the cone in $\bb R^s$ over $\Upsilon$.

In constructing a good ring of coefficients for relative cohomology, we construct a modified relative polytope in $\bb R^{s+1}$. We order the coordinates of $\bb R^{s+1}$ by $(T, \Lambda)$.  Let $\tilde \Upsilon$ be the convex closure in $\bb R^{s+1}$ of
\[
\{ 0\} \cup \{ \left(\frac{1}{1 - W_{\bar G}(r; u)} \right) \gamma \mid (\gamma; r; u) \in Supp(\bar P) \} \cup \{ (0, \ldots, 0; M / (1 - l_\sigma(\mu))) \}.
\]
Note that the point of intersection of the affine hyperplane $W_{\bar G}(\Lambda, x) = 1$ with the $\Lambda$-axis in $\bb R^{n+1}$ occurs at $\Lambda = M/(1-l_{\sigma}(\mu)$. 

Let $Cone(\tilde \Upsilon)$ be the cone in $\bb R^{s+1}$ over $\tilde \Upsilon$, and $\tilde M(\tilde \Upsilon) =  \bb Z^s \times (\tilde D^{-1} \bb Z)$ the extended monoid on which we define a weight function
\[
W_{\tilde \Upsilon}(\gamma; r) := w_\Upsilon(\gamma) + \frac{r}{M}(1 - l_\sigma(\mu))
\]
where $w_\Upsilon $ is the usual polyhedral weight determined by the polytope $\Upsilon $ in $\bb R^s$, and $\frac{r}{M}(1 - l_\sigma(\mu))$ is the weight on $\bb F_q[\Lambda^{1 / \tilde D}]$ determined by the line segment $[0, M/(1 - l_\sigma(\mu))]$. Let $w_{\tilde \Upsilon}$ be the polyhedral weight on $\tilde M(\tilde \Upsilon)$ determined by $\tilde \Upsilon $. We identify $M(\Upsilon) \times \tilde M(\bar G)$ and $\tilde M(\tilde \Upsilon) \times M(\bar f, \mu)$ via
\[
(\gamma; (r; u)) \longmapsto ((\gamma, r - m_M(u)), u).
\]
Let 
\begin{align*}
W(\gamma; r; u) &:=  W_\Upsilon(\gamma) + W_{\bar G}(r, u) \\
&= W_{\tilde \Upsilon}(\gamma; r - m_M(u)) + w(u).
\end{align*}
Using this weight function, $\c T := \bb F_q[M(\Upsilon) \times \tilde M(\bar G)]$ is a graded $\bb F_q$-algebra. Let $\c S = \bb F_q[\tilde M(\tilde \Upsilon)]$ be graded using $w_{\tilde \Upsilon}$. Then $\c T$ is a free graded $\c S$-algebra with free basis $\bar B (= \bar B_{\Lambda}):= \{ \Lambda^{m_M(u)} x^u \mid u \in M(\bar f, \mu) \}$.

As above in Section \ref{S: 2} consider the complexes $\Omega^\bullet(\c T, \nabla(\bar H))$ and $\Omega^\bullet(\c T, \nabla(D_{\bar H}))$ where the spaces for the complexes are the same: 
\[
\Omega^i := \bigoplus_{1 \leq j_1 < j_2 < \cdots < j_i \leq n} \c T \frac{dx_{j_1}}{x_{j_1}} \wedge \cdots \wedge \frac{dx_{j_i}}{x_{j_i}}
\]
for $i = 0, \ldots, n$, but the boundary maps are defined, respectively, by
\[
\nabla(\bar H)( \xi \frac{dx_{j_1}}{x_{j_1}} \wedge \cdots \wedge \frac{dx_{j_i}}{x_{j_i}}) = \left( \sum_{l =1}^n x_l \frac{\partial \bar H}{\partial x_l} \xi \frac{d x_l}{x_l} \right) \wedge \frac{dx_{j_1}}{x_{j_1}} \wedge \cdots \wedge \frac{dx_{j_i}}{x_{j_i}}
\]
and
\[
\nabla(D_{\bar H})( \xi \frac{dx_{j_1}}{x_{j_1}} \wedge \cdots \wedge \frac{dx_{j_i}}{x_{j_i}}) = \left( \sum_{l =1}^n D_{\bar H, l}(\xi) \frac{d x_l}{x_l} \right) \wedge \frac{dx_{j_1}}{x_{j_1}} \wedge \cdots \wedge \frac{dx_{j_i}}{x_{j_i}},
\]
where
\[
D_{\bar H, l} = x_l \frac{dx_l}{x_l} + x_l \frac{\partial \bar H}{\partial x_l}(T, \Lambda, x).
\]
The next result follows from the arguments in Section \ref{S: 2} and \cite{H-S}. 

\begin{theorem}
The complexes $\Omega^\bullet(\c T, \nabla(\bar H))$ and $\Omega^\bullet(\c T, \nabla(D_{\bar H}))$ are acyclic except in top dimension $n$. $H^n(\Omega^\bullet(\c T, \nabla(\bar H)))$ is a free graded $\c S$-algebra of finite rank $n! Vol(\Delta_\infty(\bar f, \mu))$. $H^n(\Omega^\bullet(\c T, \nabla(D_{\bar H})))$ is a free filtered $\c S$-algebra of finite rank $n! Vol(\Delta_\infty(\bar f, \mu))$. Let $\tilde B$ be the basis from Theorem \ref{T: 3.3}. Let $\bar B := \{ \Lambda^{m_M(u)} x^u \mid u \in \tilde B \}$ and $\c V$ the free $\c S$-module with basis $\bar B$. Then
\[
\c T = \c V \oplus \sum_{l = 1}^n x_l \frac{\partial \bar H}{\partial x_l}(T, \Lambda, x) \c T
\]
and
\[
\c T = \c V \oplus \sum_{l=1}^n D_{\bar H, l} \c T.
\]
\end{theorem}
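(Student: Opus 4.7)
The plan is to mirror the proof of Theorem \ref{T: 3.2}, adapting the two-parameter bookkeeping to accommodate the additional variables $T = (T_1, \ldots, T_s)$. The key structural fact is that $\bar H = \bar G + \bar P$ where every monomial of $\bar P$, viewed in $\bb R^{n+1}$, has strictly smaller $W_{\bar G}$-weight than $1$, while the polyhedral weight $W$ on $\c T = \bb F_q[M(\Upsilon) \times \tilde M(\bar G)]$ satisfies $W(\gamma;r;u) = W_\Upsilon(\gamma) + W_{\bar G}(r,u)$. In particular $\bar H$ is homogeneous of weight $1$ with respect to $W$, and reducing modulo the maximal ideal $(T_1, \ldots, T_s)\c T$ sends $\bar H$ to $\bar G$ and $\c T$ to the ring $T$ of Section \ref{S: 2}.

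First I would establish that $\{x_l \partial \bar H / \partial x_l\}_{l=1}^n$ is a regular sequence in $\c T$. Suppose $\sum_l x_l (\partial \bar H/\partial x_l) \xi_l = 0$ with the $\xi_l$ homogeneous of total weight $r$. Decompose each $\xi_l$ according to its $w_\Upsilon$-order in the $T$-variables, writing $\xi_l = \sum_{i \geq i_0} \xi_l^{(i)}$, where $\xi_l^{(i)}$ collects the monomials of $w_\Upsilon$-order $i/\tilde d$ (so that $\xi_l^{(i_0)}$ is the ``leading $T$-piece''). Substituting and isolating the terms of lowest $T$-order yields $\sum_l x_l (\partial \bar G/\partial x_l)\,\xi_l^{(i_0)} = 0$ modulo strictly higher $T$-order, since the $T$-derivatives of $\bar P$ raise $w_\Upsilon$-order. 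Theorem \ref{T: 3.2} supplies a skew-symmetric set realizing $\xi_l^{(i_0)}$ as $\sum_m x_m (\partial \bar G/\partial x_m)\eta_{lm}^{(i_0)}$, after which we replace $\xi_l$ by $\xi_l - \sum_m x_m(\partial \bar H/\partial x_m)\eta_{lm}^{(i_0)}$ and iterate. Because the total weight $r$ bounds the possible $w_\Upsilon$-orders appearing, the iteration terminates in finitely many steps, proving regularity. The vanishing of $H^i(\Omega^\bullet(\c T,\nabla(\bar H)))$ for $i<n$ then follows by the Koszul formalism, and the passage from $\nabla(\bar H)$ to $\nabla(D_{\bar H})$ is by the standard argument of \cite{AdolpSperb-ExponentialSumsand-1989} and \cite{H-S}.

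For the decomposition $\c T = \c V \oplus \sum_l x_l(\partial \bar H/\partial x_l)\c T$, I would run the same upward induction on $w_\Upsilon$-order combined with downward induction on $ord_\Lambda$ as in the proof of Theorem \ref{T: 3.2}. Given $\xi \in \c T$, its leading $T$-piece lies in the subring $T$ and decomposes via Theorem \ref{T: 3.2} as an element of $\tilde V$ plus $\sum_l x_l(\partial \bar G/\partial x_l)T$; lifting the second summand to $\sum_l x_l(\partial \bar H/\partial x_l)\c T$ introduces only terms of strictly higher $w_\Upsilon$-order, which the induction absorbs. Directness is inherited from directness in $T$: any relation $\sum_l x_l(\partial \bar H/\partial x_l)\zeta_l \in \c V$ would, after dividing out common $T$-factors and reducing modulo $(T)$, produce a nontrivial relation in the decomposition of Theorem \ref{T: 3.2}.

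The main obstacle will be the simultaneous management of three gradings/filtrations at once, namely the $w_\Upsilon$-order in $T$, the $ord_\Lambda$-filtration in $\Lambda$, and the total $W$-weight. Once it is verified that each reduction step strictly increases one of these invariants while preserving the others within bounded range, the inductive machinery of Section \ref{S: 2} applies without essential modification, and the conclusions on rank (equal to $n!\,Vol(\Delta_\infty(\bar f,\mu))$) follow because the basis $\bar B$ is unchanged from Theorem \ref{T: 3.3}.
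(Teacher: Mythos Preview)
Your proposal is essentially correct and aligns with what the paper intends by citing Section~\ref{S: 2} together with \cite{H-S}. Two minor points deserve attention. First, $\bar H$ need not be homogeneous of $W$-weight exactly~$1$: a monomial $(\gamma;r;u)\in Supp(\bar P)$ has $W(\gamma;r;u)=w_\Upsilon(\gamma)+W_{\bar G}(r;u)\le 1$, with equality only when the rescaled point $\tfrac{1}{1-W_{\bar G}(r;u)}\gamma$ lies on the outer boundary of $\Upsilon$; so the argument is properly filtered rather than graded, which is precisely how \cite{H-S} treats lower-order deformations. Second, your step isolating the lowest $w_\Upsilon$-order tacitly assumes every $\gamma$ in $Supp(\bar P)$ is nonzero. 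If some $\gamma=0$ occurs, those monomials of $\bar P$ persist at $T$-order zero, and the relation you extract is governed not by $\bar G$ alone but by $\bar G$ plus terms of strictly smaller $W_{\bar G}$-weight; one then needs an inner induction on $W_{\bar G}$-weight (again as in \cite{H-S}) before invoking Theorem~\ref{T: 3.2}. With these adjustments your layered induction terminates as claimed and the rest goes through unchanged.
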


We may copy as well the $p$-adic theory of Section \ref{S: 5} above but with some slight modifications. Define the coefficient ring
\[
\tilde{\c O}_0 := \left\{ \xi = \sum_{\gamma \in M(\Upsilon), r \in \tilde D^{-1} \bb Z_{\geq 0}} C(\gamma; r) T^\gamma \Lambda^r  \pi^{W_{\tilde \Upsilon}(\gamma; r)} \mid C(\gamma; r) \in \bb Z_q[\tilde \pi], C(\gamma, r) \rightarrow 0 \text{ as } w_{\tilde \Upsilon}(\gamma; r) \rightarrow \infty \right\}.
\]
equipped with the sup-norm on the coefficients $|\xi| := \sup_{(\gamma; r)} |C(\gamma; r)|$. Define
\[
\tilde{\c C}_0 = \left\{ \sum_{u \in M(\bar f, \mu)} \xi(u) \pi^{w(u)} \Lambda^{m_M(u)} x^u \mid \xi(u) \in \tilde{\c O}_0, \xi(u) \rightarrow 0 \text{ as } w(u) \rightarrow \infty \right\}.
\]
Let $H(T, \Lambda, x)$ be the lifting of $\bar H(T, \Lambda, x)$ by lifting each coefficient using the Teichm\"uller units. Set
\[
H^{(\infty)}(T, \Lambda, x) := \sum_{i = 0}^\infty \gamma_i H^{\sigma^i}(T^{p^i}, \Lambda^{p^i}, x^{p^i})
\]
with $\gamma_i$ and $\sigma$ as in Section \ref{S: 5}. We have then the following $p$-adic complex $\Omega^\bullet(\tilde{\c C}_0, \nabla(\tilde D^{(\infty)}))$ defined by
\[
\Omega^i := \bigoplus_{1 \leq j_1 < j_2 < \cdots < j_i \leq n} \tilde{\c C}_0 \frac{dx_{j_1}}{x_{j_1}} \wedge \cdots \wedge \frac{dx_{j_i}}{x_{j_i}}
\]
with boundary operator
\[
\nabla(\tilde D^{(\infty)})( \eta \frac{dx_{j_1}}{x_{j_1}} \wedge \cdots \wedge \frac{dx_{j_i}}{x_{j_i}}) = \left( \sum_{l =1}^n \tilde D_l^{(\infty)}(\eta) \frac{d x_l}{x_l} \right) \wedge \frac{dx_{j_1}}{x_{j_1}} \wedge \cdots \wedge \frac{dx_{j_i}}{x_{j_i}},
\]
where
\[
\tilde D^{(\infty)}_l = x_l \frac{dx_l}{x_l} + \pi x_l \frac{\partial H^{(\infty)}}{\partial x_l}.
\]

\begin{theorem}
The complex $\Omega^\bullet(\tilde{\c C}_0, \nabla(\tilde D^{(\infty)}))$ is acyclic except in top dimension $n$, and $H^n(\Omega^\bullet(\tilde{\c C}_0, \nabla(\tilde D^{(\infty)})))$ is a free $\tilde{\c O}_0$-module of rank equal to $n! Vol(\Delta_\infty(\bar f, \mu))$. Furthermore,
\[
\tilde{\c C}_0 = \sum_{(m_M(u), u) \in \bar B} \tilde{\c O}_0 \pi^{w(u)} \Lambda^{m_M(u)} x^u \oplus \sum_{l=1}^n \tilde D_l^{(\infty)} \tilde{\c C}_0.
\]
\end{theorem}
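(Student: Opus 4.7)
The plan is to deduce this result by lifting the characteristic-$p$ decomposition of the preceding theorem via the standard successive-approximation (Monsky-style) argument, following the template established in Section \ref{S: 5} above (see also Theorem A1 of \cite{AdolpSperb-ExponentialSumsand-1989} and Theorem 3.3 of \cite{H-S}).

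First I would set up the reduction isomorphism $\tilde{\c C}_0 / \tilde\pi \tilde{\c C}_0 \cong \c T$ of graded $\bb F_q$-algebras (an immediate consequence of the definitions of $\tilde{\c O}_0$ and $\tilde{\c C}_0$ and the weight decomposition $W(\gamma;r;u) = W_{\tilde\Upsilon}(\gamma; r - m_M(u)) + w(u)$), and check that under this reduction the complex $\Omega^\bullet(\tilde{\c C}_0, \nabla(\tilde D^{(\infty)}))$ maps onto the characteristic-$p$ complex $\Omega^\bullet(\c T, \nabla(D_{\bar H}))$ of the previous theorem. The operator $\tilde D_l^{(\infty)} = x_l \partial/\partial x_l + \pi x_l \partial H^{(\infty)}/\partial x_l$ reduces, after pairing the explicit $\pi$ against the $\pi^{w(u)}$ weighting of the basis vectors, to $D_{\bar H, l}$.

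For acyclicity in degrees $i < n$, let $\omega \in \Omega^i(\tilde{\c C}_0)$ be a cocycle. Its reduction $\bar\omega$ is a cocycle in the characteristic-$p$ complex, hence by the previous theorem equals $\nabla(D_{\bar H})\bar\eta$ for some $\bar\eta \in \Omega^{i-1}(\c T)$. Lifting $\bar\eta$ to $\eta_0 \in \Omega^{i-1}(\tilde{\c C}_0)$, the difference $\omega - \nabla(\tilde D^{(\infty)})\eta_0$ lies in $\tilde\pi \Omega^i(\tilde{\c C}_0)$; dividing by $\tilde\pi$ and iterating produces partial sums $\sum_{k=0}^N \tilde\pi^k \eta_k$ converging in the Banach $\tilde{\c O}_0$-module $\Omega^{i-1}(\tilde{\c C}_0)$ to some $\eta$ with $\nabla(\tilde D^{(\infty)})\eta = \omega$. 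The same iteration applied term-by-term to an arbitrary $\xi \in \tilde{\c C}_0$ yields the decomposition
\[
\tilde{\c C}_0 = \sum_{(m_M(u),u) \in \bar B} \tilde{\c O}_0 \pi^{w(u)} \Lambda^{m_M(u)} x^u + \sum_{l=1}^n \tilde D_l^{(\infty)} \tilde{\c C}_0,
\]
and directness follows by the parallel lifting argument applied to the directness statement of the previous theorem: any element of the intersection is $\tilde\pi^N$-divisible for every $N$ and hence vanishes. The freeness and rank of $H^n$ over $\tilde{\c O}_0$ then read off directly from the basis $\bar B$ of cardinality $n! \, Vol(\Delta_\infty(\bar f, \mu))$.

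The main technical obstacle will be controlling the $p$-adic growth of the iterates $\eta_k$ (and their analogues in the $\xi$-decomposition) so that the series actually converge in the relevant Banach completions; this is the analogue of the estimates encoded in Section \ref{S: 5} and Theorems 3.6--3.9 of \cite{H-S}. The key observation making this routine is that the weight decomposition $W(\gamma;r;u) = W_{\tilde\Upsilon}(\gamma; r - m_M(u)) + w(u)$ cleanly separates the \emph{coefficient weight} (absorbed into $\tilde{\c O}_0$ through the factor $\pi^{W_{\tilde\Upsilon}(\gamma;r)}$) from the \emph{fiber weight} $w(u)$ (which provides the decay in $u$ needed for the Banach norms). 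Once this bookkeeping is set up, the Dwork-style estimates transfer verbatim from Section \ref{S: 5}, since the incorporation of the additional $T$-variable into $\tilde{\c O}_0$ is formally identical to the incorporation of $\Lambda^{1/D}$ treated there.
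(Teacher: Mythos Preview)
Your proposal is correct and matches the paper's approach exactly: the paper does not write out a proof for this theorem but instead states it as the direct analogue of the theorem at the end of Section~\ref{S: 4}, which in turn is proved by citing the Monsky-style lifting argument (\cite[Theorem~3.3]{H-S}, \cite[Theorem~A1]{AdolpSperb-ExponentialSumsand-1989}) applied to the characteristic-$p$ reduction. Your outline---reduce mod $\tilde\pi$ to recover the complex $\Omega^\bullet(\c T,\nabla(D_{\bar H}))$ of the preceding theorem, then lift by successive approximation using completeness of $\tilde{\c C}_0$---is precisely that argument, and your identification of the weight-splitting $W(\gamma;r;u)=W_{\tilde\Upsilon}(\gamma;r-m_M(u))+w(u)$ as the bookkeeping device that makes the estimates go through unchanged is the right observation.
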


Next, we define the Frobenius, following closely the development of Section \ref{S: 5}. Define
\begin{align*}
\alpha_1 &:= \sigma^{-1} \circ \frac{1}{\exp \pi H^{(\infty)}(T^p, \Lambda^p, x)} \circ \psi_p \circ \exp \pi H^{(\infty)}(T, \Lambda, x) \\
\alpha &:= \frac{1}{\exp \pi H^{(\infty)}(T^q, \Lambda^q, x)} \circ \psi_q \circ \exp \pi H^{(\infty)}(T, \Lambda, x),
\end{align*}
where $\psi_p$ and $\psi_q$ are defined as in Section \ref{S: 5}. Since formally
\[
\tilde D_{l,T, \Lambda}^{(\infty)}:= \frac{1}{\exp \pi H^{(\infty)}(T, \Lambda, x)} \circ x_l \frac{\partial}{\partial x_l} \circ \exp \pi H^{(\infty)}(T, \Lambda, x)
\]
the following commutation laws will hold for $l = 1, 2, \ldots, n$:
\begin{equation}\label{E: comm2}
q \tilde D_{l, T^q, \Lambda^q}^{(\infty)} \circ \alpha = \alpha \circ \tilde D_{l, T, \Lambda}^{(\infty)} \quad \text{and} \quad p \tilde D_{l, T^p \Lambda^p}^{(\infty)} \circ \alpha_1 = \alpha_1 \circ \tilde D_{l, T, \Lambda}^{(\infty)}.
\end{equation}
For $q = p^a$ ($a \in \bb Z_{\geq 0}$), define the weight function
\[
W_{\tilde \Upsilon, q}(\gamma; r) := \frac{1}{q} \left( w_{\tilde \Upsilon}(\gamma) + \frac{r}{M}(1 - l_\sigma(\mu)) \right),
\]
and the coefficient space
\[
\tilde{\c O}_{0, q} := \left\{ \xi = \sum_{\gamma \in M(\Upsilon), r \in \tilde D^{-1} \bb Z_{\geq 0}} C(\gamma; r) T^\gamma \Lambda^r  \pi^{W_{\tilde \Upsilon, q}(\gamma; r)} \mid C(\gamma; r) \in \bb Z_q[\tilde \pi], C(\gamma, r) \rightarrow 0 \text{ as } (\gamma; r) \rightarrow \infty \right\}
\]
equipped with the sup-norm on the coefficients $|\xi| := \sup_{(\gamma; r)} |C(\gamma; r)|$. Define
\[
\tilde{\c C}_0(\tilde{\c O}_{0,q}) = \left\{ \sum_{u \in M(\bar f, \mu)} \xi(u) \pi^{w(u)} \Lambda^{m_M(u)} x^u \mid \xi(u) \in \tilde{\c O}_{0, q}, \xi(u) \rightarrow 0 \text{ as } w(u) \rightarrow \infty \right\}.
\]
Analogous to Theorem \ref{T: q-version}, we have:

\begin{theorem}
Let $\tilde D^{(\infty)}_{l, T^q, \Lambda^q} = x_l \frac{\partial}{\partial x_l} + \pi x_l \frac{\partial H^{(\infty)}(T^q, \Lambda^q, x)}{\partial x_l}$. Let $\Omega^\bullet(\tilde{\c C}_0(\tilde{\c O}_{0,q}), \nabla(\tilde D_{T^q, \Lambda^q}^{(\infty)}))$ be the complex
\[
\Omega^i := \bigoplus_{1 \leq j_1 < \cdots < j_i \leq n} \tilde{\c C}_0( \tilde{\c O}_{0,q}) \frac{d x_{j_1}}{x_{j_1}} \wedge \cdots \wedge \frac{d x_{j_i}}{x_{j_i}}
\]
with boundary map
\[
\nabla(\tilde D_{T^q, \Lambda^q}^{(\infty)})(\xi \frac{d x_{j_1}}{x_{j_1}} \wedge \cdots \wedge \frac{d x_{j_i}}{x_{j_i}}) = \left( \sum_{l=1}^n \tilde D_{l, T^q, \Lambda^q}(\xi) \frac{d x_l}{x_l} \right) \wedge \frac{d x_{j_1}}{x_{j_1}} \wedge \cdots \wedge \frac{d x_{j_i}}{x_{j_i}}.
\]
This complex is acyclic except in top dimension $n$ and $H^n(\Omega^\bullet(\tilde{\c C}_0(\tilde{\c O}_{0, q}), \nabla(\tilde D_{T^q, \Lambda^q}^{(\infty)})))$ is a free $\tilde{\c O}_{0, q}$-module of rank equal to $n! Vol \> \Delta_\infty(\bar f, \mu)$. Furthermore,
\[
\tilde{\c C}_0(\tilde{\c O}_{0, q}) = \sum_{(m(v); v) \in \bar B} \tilde{\c O}_{0,q} \pi^{w(v)} \Lambda^{m(v)} x^v \oplus \sum_{l=1}^n \tilde D_{l, T^q, \Lambda^q}^{(\infty)} \tilde{\c C}_0(\tilde{\c O}_{0, q})
\]
where $\bar B$ is the same monomial basis as in Theorem \ref{T: 3.2}.
\end{theorem}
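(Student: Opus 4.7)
The plan is to mimic the proof of Theorem \ref{T: q-version}, with the lower-order deformation variable $T$ absorbed into the coefficient ring $\tilde{\c O}_{0,q}$. The modified weight function $W_{\tilde \Upsilon, q}(\gamma;r) = q^{-1}(w_{\tilde \Upsilon}(\gamma) + \frac{r}{M}(1 - l_\sigma(\mu)))$ is designed precisely so that the operator $\tilde D^{(\infty)}_{l, T^q, \Lambda^q}$ matches the rescaling induced by substituting $T \mapsto T^q$, $\Lambda \mapsto \Lambda^q$. As in the $q$-version of Section \ref{S: 5}, the commutation rule (\ref{E: comm2}) will let us transfer results from the unrescaled $(T,\Lambda)$-setting to the $(T^q, \Lambda^q)$-setting.

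First I would establish the decomposition
\[
\tilde{\c C}_0(\tilde{\c O}_{0,q}) = \bigoplus_{(m_M(u); u) \in \bar B} \tilde{\c O}_{0,q} \pi^{w(u)} \Lambda^{m_M(u)} x^u + \sum_{l=1}^n \left(\pi x_l \frac{\partial H^{(\infty)}}{\partial x_l}(T^q, \Lambda^q, x)\right) \tilde{\c C}_0(\tilde{\c O}_{0,q}),
\]
following the pattern of the $K(b/q,b;c)$-decomposition results in Section \ref{S: 5}. The argument proceeds by successive approximation: given $\xi \in \tilde{\c C}_0(\tilde{\c O}_{0,q})$, reduce mod $\tilde \pi$ and use the preceding theorem of Section \ref{S: 6} (giving $\c T = \c V \oplus \sum_l x_l \frac{\partial \bar H}{\partial x_l} \c T$) to obtain a decomposition over the graded reduction $\c T$. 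Lifting via Teichm\"uller representatives yields an approximation $\xi = \xi_0 + \sum_l \pi x_l \frac{\partial H^{(\infty)}}{\partial x_l}(T^q, \Lambda^q, x) \eta_l + \tilde \pi \xi'$, where each successive error $\xi'$ lies in a strictly improved $p$-adic order class. Convergence in the sup-norm on $\tilde{\c O}_{0,q}$ follows from the factor $\pi$ in the first derivative term, together with the bound $ord_p(\pi \gamma_i p^i) \geq p^i - 1 \geq 0$ already used in Section \ref{S: 4}.

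Next I would pass from $\pi x_l \frac{\partial H^{(\infty)}}{\partial x_l}(T^q, \Lambda^q, x)$ to the full operator $\tilde D^{(\infty)}_{l, T^q, \Lambda^q}$ by the standard Koszul / Monsky trick: formally, $\tilde D^{(\infty)}_{l, T^q, \Lambda^q} = \exp(-\pi H^{(\infty)}(T^q, \Lambda^q, x)) \circ x_l \frac{\partial}{\partial x_l} \circ \exp(\pi H^{(\infty)}(T^q, \Lambda^q, x))$, and the conjugation exchanges multiplication by $\pi x_l \frac{\partial H^{(\infty)}}{\partial x_l}$ for $\tilde D^{(\infty)}_{l, T^q, \Lambda^q}$ modulo an error term controlled by the filtration, as in \cite[Theorem A1]{AdolpSperb-ExponentialSumsand-1989}. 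A second round of successive approximation then yields the desired decomposition. Acyclicity of the Koszul complex in degrees $< n$ follows from the fact that $\{x_l \frac{\partial \bar H}{\partial x_l}\}$ is a regular sequence on $\c T$ (established in the preceding theorem of Section \ref{S: 6} via nondegeneracy of $\bar H$), lifted to the $p$-adic setting by the same successive approximation argument applied to cochains in intermediate dimensions.

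The main obstacle will be tracking the interaction between the two rescalings: the $1/q$ factor built into $W_{\tilde \Upsilon, q}$ and the appearance of $\Lambda^q, T^q$ inside $\tilde D^{(\infty)}_{l, T^q, \Lambda^q}$. One must verify that, under the substitution, the $p$-adic growth conditions defining $\tilde{\c O}_{0,q}$ and the weight bounds on $\pi x_l \frac{\partial H^{(\infty)}}{\partial x_l}(T^q, \Lambda^q, x)$ are compatible, so that each approximation step strictly improves the $p$-adic order independently of $q$. This is analogous to the estimate $K(b/q, b; c) \supset \pi x_l \frac{\partial F}{\partial x_l}(\Lambda^q, x) K(b/q, b; c+\tilde e)$ from the analogue of \cite[Theorem 3.6]{H-S} used in Section \ref{S: 5}, and the proof proceeds by exactly the same bookkeeping on weights, now carrying along the extra variable $T$ with its polyhedral weight $w_{\tilde \Upsilon}$.
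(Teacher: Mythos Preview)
Your proposal is correct and matches the paper's intended argument. The paper gives no explicit proof here, stating only ``Analogous to Theorem~\ref{T: q-version}, we have:'' --- and your outline (reduce mod $\tilde\pi$ to invoke the characteristic-$p$ decomposition $\c T = \c V \oplus \sum_l x_l \frac{\partial \bar H}{\partial x_l}\c T$, lift by successive approximation, then pass from multiplication by $\pi x_l\partial H^{(\infty)}/\partial x_l$ to $\tilde D^{(\infty)}_{l,T^q,\Lambda^q}$ via the Monsky/\cite[Theorem A1]{AdolpSperb-ExponentialSumsand-1989} trick) is exactly the standard route the paper invokes throughout Sections~\ref{S: 4}--\ref{S: 6}.
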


Next, for $0 < b \leq p/(p-1)$ and $c \in \bb R$, define the spaces
\begin{align*}
\tilde R(b; c) &:= \left\{  \sum_{\gamma \in M(\Upsilon), r \in \tilde D^{-1} \bb Z_{\geq 0}} A(\gamma; r) T^\gamma \Lambda^r  \mid A(\gamma; r) \in \bb Q_q(\tilde \pi), ord_p(A(\gamma, r)) \geq b W_{\tilde \Upsilon, q}(\gamma, r) + c  \right\} \\
\tilde R(b) &:= \bigcup_{c \in \bb R} \tilde R(b; c),
\end{align*}
and for $0 < b' \leq p/(p-1)$,
\[
\tilde K(b', b; c) := \left\{ \sum_{ \substack{\gamma \in M(\Upsilon), r \in \tilde D^{-1} \bb Z_{\geq 0} \\ u \in M(\bar f, \mu)}} A(\gamma, r, u) T^\gamma \Lambda^r x^u \Bigg|  
\begin{split}
&A(\gamma, r, u) \in \bb Q_q(\tilde \pi) \text{ such that} \\ 
&ord_p(A(\gamma, r, u)) \geq b' W_{\tilde \Upsilon}(\gamma, r - m_M(u)) + b w(v) + c
\end{split}
\right\}
\]
and
\[
\tilde K(b', b) := \bigcup_{c \in \bb R} \tilde K(b', b; c).
\]
Writing
\[
H(T, \Lambda, x) = \sum A(\gamma, r, u) T^\gamma \Lambda^r x^u,
\]
we define using the splitting function $\theta$ from Section \ref{S: 5}
\[
\mathfrak{H}(T, \Lambda, x) := \prod_{(\gamma, r, u) \in Supp(H)} \theta(A(\gamma, r, u) T^\gamma \Lambda^r x^u) \in \tilde K(1/(p-1), 1/(p-1); 0) \subset \tilde K(b'/p, b/p; 0).
\]
Set
\[
\mathfrak{H}_a(T, \Lambda, x) := \prod_{i=0}^{a-1} \mathfrak{H}^{\sigma^i}(T^{p^i}, \Lambda^{p^i}, x^{p^i}),
\]
then
\[
\mathfrak{H}_a \in \tilde K(\frac{p}{q(p-1)}, \frac{p}{q(p-1)}; 0) \subset \tilde K(b'/p, b/p; 0).
\]
Note that $\tilde {\c C}_0 \subset \tilde K(1 / (p-1), 1/(p-1);0) \subset \tilde K(b'/p, b/p)$, and for $b', b \geq 1/(p-1)$, $\tilde K(b'/q, b;0) \subset \tilde {\c C}_0(\tilde{\c O}_{0, q})$. It follows that $\alpha_1 := \sigma^{-1} \circ \psi_p \circ \mathfrak{H}$ maps $\sigma^{-1}$-semilinearly $\tilde{\c C}_0$ into $\tilde{\c C}_0(\tilde {\c O}_{0, p})$, and it maps $\tilde K(b', b; c)$ $\sigma^{-1}$-semilinearly into $\tilde K(b'/p, b; c)$. Similarly, $\alpha = \psi_q \circ \mathfrak{H}_a$ maps $\tilde{\c C}_0(\tilde {\c O}_0)$ into $\tilde {\c C}_0(\tilde{\c O}_{0, q})$ linearly over $\bb Z_q[\tilde \pi]$, and $\tilde K(b', b; c)$ into $\tilde K(b'/q, b; c)$. We may now define chain maps as follows. Let
\begin{align*}
\text{Frob}^i &:= \bigoplus_{1 \leq j_1 < \cdots < j_i \leq n} q^{n-i}  \alpha \frac{dx_{j_1}}{x_{j_1}} \wedge \cdots \wedge \frac{dx_{j_i}}{x_{j_i}}, \\
\text{Frob}_1^i &:= \bigoplus_{1 \leq j_1 < \cdots < j_i \leq n} p^{n-i}  \alpha_1 \frac{dx_{j_1}}{x_{j_1}} \wedge \cdots \wedge \frac{dx_{j_i}}{x_{j_i}}. \notag
\end{align*}
Then the commutation rules (\ref{E: comm2}) ensure that this defines chain maps
\[
\begin{CD}
\Omega^\bullet( \tilde {\c C}_0( \tilde {\c O}_0), \nabla(\tilde D^{(\infty)}_{T, \Lambda})) @>{\text{Frob}_1^\bullet}>> \Omega^\bullet(\tilde{\c C}_0(\tilde{\c O}_{0, p}), \nabla(\tilde D^{(\infty)}_{T^p, \Lambda^p}))
\end{CD}
\]
and
\[
\begin{CD}
\Omega^\bullet( \tilde{\c C}_0( \tilde{\c O}_0), \nabla(\tilde D^{(\infty)}_{T, \Lambda})) @>{\text{Frob}^\bullet}>> \Omega^\bullet(\tilde{\c C}_0(\tilde {\c O}_{0, q}), \nabla(\tilde D^{(\infty)}_{T^q, \Lambda^q})).
\end{CD}
\]
As in the previous section,  these complexes  are acyclic except possibly in dimension $n$. Following that argument, we obtain
the following estimates for the relative Frobenius map. 

\begin{theorem}\label{T: FrobEst2}
If we write for each $u \in \tilde B$, i.e for $\Lambda^{m_M(u)} x^u \in B$,
\[
\alpha_1( \Lambda^{m_M(u)} x^u) = \sum_{v \in B} A(u, v)(T,\Lambda) \Lambda^{m_M(v)} x^v \quad \text{mod } \sum_{l=1}^n D_{l, T^p, \Lambda^p}^{(\infty)} \tilde K(b/p, b),
\]
then
\[
A(u, v)(T,\Lambda) \in \tilde R(b/p; (b/p)(p w(v) - w(u))).
\]
\end{theorem}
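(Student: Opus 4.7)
The plan is to mirror the proof of Theorem \ref{T: FrobEst} in the presence of the extra parameter $T$ and the lower-order deformation $\bar H = \bar G + \bar P$, substituting the enlarged Banach spaces $\tilde K(b', b; c)$, $\tilde R(b; c)$ and the splitting $\mathfrak{H}(T, \Lambda, x)$ for their Section \ref{S: 5} counterparts $K$, $R$, and $\mathfrak{F}$. Throughout I fix rational $b, b'$ with $1/(p-1) < b, b' \leq p/(p-1)$.

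First I would localize the single monomial $\Lambda^{m_M(u)} x^u$ inside the appropriate $\tilde K$-space: its only nonzero coefficient is $1$ and its $W_{\bar G}$-weight is exactly $w(u)$ (by the analog for $m_M$ of identity (\ref{E: 41}), which is verified directly from $w(v) = l_\sigma(v) + m_M(v)(1 - l_\sigma(\mu))/M$ on each $Cone(\tau, \mu)$), so
\[
\Lambda^{m_M(u)} x^u \in \tilde K(b'/p, \, b/p; \, -(b/p) \, w(u)).
\]
Multiplication by $\mathfrak{H} \in \tilde K(1/(p-1), 1/(p-1); 0) \subset \tilde K(b'/p, b/p; 0)$ (recorded just before the theorem statement) preserves this space; $\psi_p$ then sends $\tilde K(b', b; c)$ into $\tilde K(b', pb; c)$ as in Section \ref{S: 5}, and $\sigma^{-1}$ is isometric. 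Composing, one obtains
\[
\alpha_1(\Lambda^{m_M(u)} x^u) = \sigma^{-1} \psi_p \bigl( \mathfrak{H} \cdot \Lambda^{m_M(u)} x^u \bigr) \in \tilde K\bigl( b'/p, \, b; \, -(b/p) \, w(u) \bigr).
\]

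The substantive step is to transport the Dwork-style decomposition chain (Theorems 3.6--3.9 of \cite{H-S} together with the $K$-versions proven in Section \ref{S: 5} of this paper) to the $\tilde K$-setting, which yields
\[
\tilde K(b'/p, b) = \tilde W(b'/p, b) \oplus \sum_{l=1}^n \tilde D_{l, T^p, \Lambda^p}^{(\infty)} \, \tilde K(b'/p, b),
\]
where $\tilde W(b'/p, b)$ consists of elements of the form $\sum_{v \in B} \xi_v(T, \Lambda) \Lambda^{m_M(v)} x^v$ with each $\xi_v$ lying in the corresponding $\tilde R$-space. The authors signal explicitly that this follows by slight modifications of the arguments of Section \ref{S: 5}, so I would take it as given and define $A(u, v)(T, \Lambda)$ as the $\Lambda^{m_M(v)} x^v$-component of the projection of $\alpha_1(\Lambda^{m_M(u)} x^u)$ onto $\tilde W$.

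Finally, I would read the estimate off the definitions: setting $b' = b$, the containment in $\tilde K(b/p, b; -(b/p) w(u))$ forces every coefficient of $T^\gamma \Lambda^r$ in $A(u, v)(T, \Lambda)$ to satisfy $ord_p \geq (b/p)\, W_{\tilde \Upsilon}(\gamma; r) + b \, w(v) - (b/p)\, w(u)$, and factoring out $b/p$ from the first and third terms gives exactly $A(u, v)(T, \Lambda) \in \tilde R(b/p; (b/p)(p w(v) - w(u)))$. The sole genuine obstacle in this program is the middle step --- the $\tilde K$-analog of the decomposition, which requires redoing the full sequence of Dwork-type lemmas with $T$ present; once that is in hand, the remainder is valuation bookkeeping identical in form to the proof of Theorem \ref{T: FrobEst}.
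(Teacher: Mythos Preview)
Your proposal is correct and follows exactly the approach the paper intends: the paper gives no separate proof of Theorem \ref{T: FrobEst2}, instead stating that ``following that argument'' (i.e., the proof of Theorem \ref{T: FrobEst}) one obtains the estimates, and your outline reproduces that argument step for step with $\mathfrak{H}$, $\tilde K$, $\tilde R$, and $m_M$ in place of $\mathfrak{F}$, $K$, $R$, and $m$. Your identification of the $\tilde K$-decomposition as the only nontrivial ingredient, to be imported from the Section \ref{S: 5} chain of results by the ``slight modifications'' the authors invoke, is also in line with the paper's treatment.
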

Clearly a corollary entirely analogous to Corollary \ref{C: Frob est1} holds here as well with the new definition of weight being the key  modification.

\section{$L$-functions} \label{S: L-function}

Much of the material here references the work in \cite[Section 3.3]{H-S}. Let $\bar F(\Lambda, x) = \bar f(x) + \Lambda x^\mu$ be the Laurent polynomial from Section \ref{S: 1}, or $\bar F(\Lambda, x) = \bar f(x) + \Lambda^{-1} x^\mu$ from Section \ref{S: 3}. Let $\bar \lambda \in \overline{\bb F}_q^*$, with $deg(\bar \lambda) := [\bb F_q(\bar \lambda) : \bb F_q]$. Using Dwork's splitting function, define an additive character $\Theta : \bb F_q \rightarrow \overline{\bb Q}_p$ by $\Theta := \theta(1)^{Tr_{\bb F_q / \bb F_p}(\cdot)}$, and $\Theta_{\bar \lambda} := \Theta \circ Tr_{\bb F_q(\bar \lambda) / \bb F_q}$. We consider the toric exponential sums
\[
S_r(\bar F, \bar \lambda)  := \sum_{x \in \bb F_{q^{r deg(\bar \lambda)}}^{*n}} \Theta_{\bar \lambda} \circ Tr_{\bb F_{q^{r deg(\bar \lambda)}} / \bb F_q(\bar \lambda)} \left(\bar F(\bar \lambda, x) \right)
\]
and the associated $L$-functions
\[
L(\bar F, {\bar \lambda}, T) := L(\bar F, {\bar \lambda}, \Theta, \bb G_m^n / \bb F_q({\bar \lambda}), T) := \exp \left( \sum_{r = 1}^\infty S_r({\bar F}, {\bar \lambda}) \frac{T^r}{r} \right).
\]
Let $\lambda$ be the Teichm\"uller representative of ${\bar \lambda}$. Let $\c O_{0,  \lambda} := \bb Z_q[\tilde \pi,  \lambda]$. There is a  ring map, $sp_{\lambda}$ which we call the specialization map at $ \lambda$, from $\c O_0$ to $\c O_{0,  \lambda}$ induced by the map sending $\Lambda \mapsto  \lambda$. Similarly, let $\c C_{0,  \lambda}$ be the $\c O_{0,  \lambda}$-module obtained by specializing the space $\c C_0$ at $\Lambda =  \lambda$. We recall, for $\bar \lambda \in \overline{\bb F}_q^*$ and $\lambda$ its  Teichm\"uller representative, that the complex $\Omega^\bullet(\c C_{0,\lambda}, \nabla( D^{(\infty)}_{\lambda}))$ defined just as the complex  $\Omega^\bullet(\c C_0, \nabla( D^{(\infty)}))$ in Section \ref{S: 4} but with $\c C_0 $ replaced by $\c C_{0, \lambda}$ and $D_{l, \Lambda}^{(\infty)} = x_l \frac{\partial}{\partial x_l} + \pi x_l \frac{\partial F^{(\infty)}(\Lambda, x)}{\partial x_l}$ replaced by $D_{l,\lambda}^{(\infty)} := x_l \frac{\partial}{\partial x_l} + \pi x_l \frac{\partial F^{(\infty)}(\lambda, x)}{\partial x_l}$ was studied in  \cite{AdolpSperb-ExponentialSumsand-1989} and shown there to be acyclic except in top dimension n. Furthermore, as in  \cite{AdolpSperb-ExponentialSumsand-1989}, we define $\mathfrak{F}(\lambda, x) := sp_{\lambda} \mathfrak{F}(\Lambda, x)$, $\mathfrak{F}_a(\lambda,x) := sp_{\lambda} \mathfrak{F}_a(\Lambda,x)$ and set $\alpha_{1,\lambda}  :=  \sigma^{-1} \circ \psi_p  \circ \mathfrak{F}(\lambda, x) $ and $\alpha_{\lambda} := \alpha_{1,\lambda}^{adeg(\bar \lambda)}$.We may define $Frob_{ \lambda}^{\bullet}$  acting as a chain map on   $\Omega^\bullet(\c C_{0,\lambda}, \nabla( D^{(\infty)}_{\lambda}))$ as in (\ref{E: Frob}) but with $\alpha$ replaced by $\alpha_{ \lambda}$ and $q$ replaced by $q^{deg(\lambda)}$.  Then using Dwork's trace formula, we obtain
\begin{align*}
S_r(\bar F, {\bar \lambda}) &= (q^{r deg({\bar \lambda})} - 1)^n Tr(\alpha_{ \lambda} \mid \c C_{0, \lambda}) \\
&= \sum_{i=0}^n (-1)^i Tr( H^i(Frob_{ \lambda})^r \mid H^i(\c C_{0,  \lambda},\nabla(D_{\lambda}^{(\infty)}))).
\end{align*}
Since cohomology is acyclic by \cite{AdolpSperb-ExponentialSumsand-1989} except in top dimension $n$, we have
\[
S_r(\bar F,{\bar \lambda}) = (-1)^n Tr( H^n(Frob_{ \lambda})^r \mid H^n(\c C_{0,  \lambda}, \nabla(D_{\lambda}^{(\infty)}))).
\]
In other words,  we have $L(\bar F,{\bar \lambda}, T)^{(-1)^{n+1}} = det(1 - H^n(Frob_{\lambda})T) $. For each ${\bar \lambda} \in \overline{\bb F}_q^\times$, set $\frac A({\bar \lambda}) := \{\pi_i({\bar \lambda})\}_{i=1}^N$ the collection of eigenvalues of $ H^n(Frob_{ \lambda})$. Clearly then $L(\bar F,{\bar \lambda}, T)^{(-1)^{n+1}} = (1 - \pi_1({\bar \lambda}) T) \cdots (1 - \pi_N({\bar \lambda}) T) $ where $N := n! \> Vol(\Delta_\infty(\bar f, \mu))$. In \cite{AdolpSperb-ExponentialSumsand-1989}  it is proved that for each such ${\bar \lambda}$  the Newton polygon of $L(\bar F,{\bar \lambda}, T)^{(-1)^{n+1}} $ lies over the Newton polygon (using $ord_{q^{deg(\hat \lambda)}}$) of
\begin{equation}\label{E: NPfibre}
\prod_{v \in  B} ( 1- (q^{deg({\bar \lambda})})^{w(v)} T),
\end{equation}
where $ B (=\tilde B )$ is defined in Theorem \ref{T: 3.3}.

 For each  $\bar \lambda \in \overline{\bb F}_q^*$, let $ \mathfrak A(\bar \lambda)$ denote the multi-set of eigenvalues of $ H^n(Frob_{\lambda})$. Let $\c L$ be a linear algebra operation such as a symmetric power, or exterior power, or tensor power or a combination of such. Let $\c L \mathfrak  A({\bar \lambda})$ be the corresponding  multi-set of eigenvalues of $\c L H^n(Frob_{\lambda})$. Define
\[
L(\c L, \bar F, \bb G_m / \bb F_q, T) := \prod_{{\bar \lambda} \in |\bb G_m / \bb F_q|} \ \    \prod_{\tau({\bar \lambda}) \in \c L \mathfrak A({\bar \lambda})} (1 - \tau({\bar \lambda}) T^{deg({\bar \lambda})})^{-1}.
\]
To aid the reader, we will consider a running example throughout this section; if $\c L$ is the operation of the $k$-th symmetric power tensor the $l$-th exterior power, then
\begin{align*}
\c L \mathfrak A({\bar \lambda}) &= Sym^k \mathfrak A({\bar \lambda}) \otimes \wedge^l \mathfrak A({\bar \lambda}) \\
&= \{ \pi_1({\bar \lambda})^{i_1} \cdots \pi_N({\bar \lambda})^{i_N} \pi_{j_1}({\bar \lambda}) \cdots \pi_{j_l}({\bar \lambda}) \mid i_1 + \cdots + i_N = k, 1 \leq j_1 < \cdots < j_l \leq N \},
\end{align*}
where the latter is a multi-set, and $\mathfrak A({\bar \lambda}) = \{ \pi_l({\bar \lambda}) \mid l=1, \ldots, N \}$. In general, the cardinality of $\c L \mathfrak A({\bar \lambda})$ is independent of ${\bar \lambda}$. Let $\c L N$ denote the cardinality of $\c L \mathfrak A({\bar \lambda})$ .

Let $\bar B = \{ \Lambda^{m(v_1)} x^{v_1}, \ldots, \Lambda^{m(v_N)} x^{v_N}  \}$ be the basis for $H^n := H^n(\Omega^\bullet(\c C_0, \nabla(F)))$ as in Theorem \ref{T: 3.2}. For $q$ a power of the prime $p$ (including possibly $q = p^0$) define
\[
H_{{\Lambda}^q}^i :=  H^i( \Omega^\bullet(\c C_0(\c O_{0,q}), \nabla(D_{{\Lambda}^q}^{(\infty)}))).
\]
 For emphasis and clarity, we will in the following denote $\bar B$ by $\bar B_{\Lambda}$, and we will write $\bar B_{{\Lambda}^q}$ for the collection  of monomials  obtained by replacing $\Lambda$ by ${\Lambda}^q$ in $\bar B_{\Lambda}$.  Then  $H_{{\Lambda}^q}^n $ is a free $\c O_{0,q}$-module with basis $\bar B_{{\Lambda}^q}$. Let $\c L \bar B_{\Lambda} = \{ e^{\b b^{(l)}} \}_{l=1,2, \ldots, \c L N}$. In our example, $\c L H^n = Sym^k H^n \otimes \wedge^l H^n$, elements in the basis $\c L \bar B_{\Lambda}$ take the form
\[
e^{(\b i, \b j)} = (\Lambda^{m(v_1)} x^{v_1})^{i_1} \cdots (\Lambda^{m(v_N)} x^{v_N})^{i_N} \otimes (\Lambda^{m(v_{j_1})} x^{v_{j_1}} \wedge \cdots \wedge \Lambda^{m(v_{j_l})} x^{v_{j_l}})
\]
where $\b b = (\b i, \b j)$ with $\b i = (i_1. \cdots, i_n)$  and $\b j =(j_1, \cdots, j_l) $ satisfying
\[
i_1 + \cdots + i_N = k \quad \text{and} \quad 1 \leq j_1 < \cdots < j_l \leq N.
\]
We extend the Frobenius map $H^{n}(Frob_{1,\Lambda})$ mapping from $H^{n}_{\Lambda}$ to $H^{n}_{{\Lambda}^p}$ in the  usual way to obtain the ${\sigma}^{-1} $ semilinear (over $\c O_0)$ map
\[
\c L H^n(Frob_{1,\Lambda}): \c L H^{n}_{\Lambda} \rightarrow \c L H^{n}_{{\Lambda}^p}.
\]
Similarly, we have the  linear $\c O_0$ map
\[
 \c L H^n(Frob_{\Lambda}): \c L H^{n}_{\Lambda} \rightarrow \c L H^{n}_{{\Lambda}^q}
\]

Let $\c A(\Lambda)=(\c A_{\bold i, \bold j}(\Lambda))$ be the $N \times N$ matrix of $ H^n(Frob_{1,\Lambda}) $with respect to the  bases $\bar B_{\Lambda} $ and $ \bar B_{{\Lambda}^p}$ (see Corollary \ref{C: Frob est1} above). Let $\c L \c A(\Lambda)=(\c L \c A_{\bold i, \bold j}(\Lambda))$ be the $\c L N \times \c L N$ matrix of $\c L H^n(Frob_{1,\Lambda}) $with respect to the  bases $\c L \bar B_{\Lambda} $ and $ \c L \bar B_{{\Lambda}^p}$. Similarly let $\c B(\Lambda)$ be the matrix of $\ H^n(Frob_{\Lambda})$ with respect to the bases $ \bar B_{\Lambda}$ and  $\bar B_{{\Lambda}^q}$ and  $ \c L \c B(\Lambda)$ be the matrix of $\c L H^n(Frob_{\Lambda})$ with respect to the bases $\c L \bar B_{\Lambda}$ and  $\c L \bar B_{{\Lambda}^q}$ . It is immediate that
\[
 \c L H^n(Frob_{1,\Lambda})^a = \c L H^n(Frob_{\Lambda})
\]
so that 
\[
 \c L \c A^{\sigma^{a-1}}(\Lambda^{p^{a-1}}) \cdots  \c L \c A^\sigma(\Lambda^p) \c L \c A(\Lambda)= \c L \c B(\Lambda)
\]
It is clear from our construction that if $\bar \lambda \in \bb F_q^* $, then $\c L \c A(\lambda)$ is the matrix of $\c L H^n(Frob_{1, \lambda})$ mapping  $\c L H^n(\c C_{0,  \lambda}, \nabla(D_{\lambda}^{(\infty)})))$ into $\c L H^n(\c C_{0,  {\lambda}^p}, \nabla(D_{{\lambda}^p}^{(\infty)})))$ with respect to the bases $sp_{\lambda} \c L \bar B_{\Lambda}$ and $sp_{\lambda} \c L \bar B_{{\Lambda}^p}$. Similarly for $\c L \c B(\lambda)$. Now let $\bar \lambda \in \overline{\bb F}_q^*$ and $ \lambda$ its  Teichm\"uller lift. For a positive integer $r$, let $\c L \c B^{(r)}(\Lambda)$ be the matrix of 
\[
\c L H^n(Frob_{{\Lambda}^{q^{r-1}}}) \circ \c L H^n(Frob_{{\Lambda}^{q^{r-2}}}) \circ \cdots   \circ \c L H^n(Frob_{\Lambda})                                                                                                                                                                                                                                                                                                                                                                                                                                                                                                                                                                                                                                                                                                                                                                                                                                                                                                                                                                                                                                                                             \]
with respect to the bases $\c L \bar B_{\Lambda}$ and $\c L \bar B_{{\Lambda}^{q^r}}.$ Then $\c L \c B^{(deg(\bar \lambda))}(\lambda) ( = sp_{\lambda} \c L \c B^{(deg(\bar \lambda))}(\lambda)= \c L \c  B({\lambda}^{q^{deg(\bar \lambda)-1}}) \c L \c B({\lambda}^{q^{deg(\bar \lambda)-2}}) \cdots \c L \c B(\lambda))$ is the matrix of $\c L H^n(Frob_{\lambda}) $, an endomorphism over $\c O_{0, \lambda} $ of $\c L H^n(\c C_{0,  \lambda}, \nabla(D_{\lambda}^{(\infty)}) )$) with respect to the basis $sp_{\lambda} \bar B_{\Lambda}.$

Since  $\c L \mathfrak A(\lambda)$ is the set of eigenvalues of $\c L \c B^{(deg(\bar \lambda))}(\lambda)$, we have 
\[
det(1- \c L \c B^{(deg(\bar \lambda))}(\lambda)T) =det(1 - \c L \c B( \lambda^{q^{deg(\bar \lambda)-1}}) \cdots \c L \c B(\lambda^q) \c L \c B( \lambda) T) = \prod_{\tau(\bar \lambda) \in \c L \mathfrak A(\lambda)}(1 - \tau(\bar \lambda) T).
\]
Consequently,
\begin{align*}
L(\c L, \bar F, \bb G_m / \bb F_q, T) &:= \prod_{\bar \lambda \in |\bb G_m / \bb F_q|} \ \ \prod_{\tau(\bar \lambda) \in \c L \mathfrak A(\bar \lambda)}(1 - \tau(\bar \lambda) T^{deg(\bar \lambda)} )^{-1} \\
&= \prod_{\bar \lambda \in | \bb G_m / \bb F_q|} det(1 - \c L \c B( \lambda^{q^{deg(\lambda)}}) \cdots \c L \c B( \lambda^q) \c L \c B( \lambda) T^{deg(\bar \lambda)})^{-1}.
\end{align*}

Define a weight on each basis vector in $\c L \bar B = \{ e^{\b b} \}_{\b b \in I}$ as follows. Using (\ref{E: Wt}), define
\begin{align}\label{E: weightBasis}
\bold w(\b b) :&= \sum_{i=1}^N m_i W(m(v_i), v_i) \\
&= \sum_{i=1}^N m_i w(v_i)
\end{align}
where $\Lambda^{m(v_i)} x^{v_i}$ appears $m_i$ times in the basis element $e^{\b b}$. For example, continuing our running example, if
\[
e^{\b b} = e^{(\b i, \b j)}=(\Lambda^{m(v_1)} x^{v_1})^{i_1} \cdots (\Lambda^{m(v_N)} x^{v_N})^{i_N} \otimes (\Lambda^{m(v_{j_1})} x^{v_{j_1}} \wedge \cdots \wedge \Lambda^{m(v_{j_l})} x^{v_{j_l}})
\]
then
\[
\bold w(\b b) = i_1 w(v_1) + \cdots + i_N w(v_N) + w(v_{j_1}) + \cdots + w(v_{j_l}).
\]

\begin{proposition}\label{P: Relate A and B}
There exists a matrix $\c L \c A = (\c L \c A_{\b i, \b j})_{\b i, \b j \in I}$ with entries in $R(\frac{1}{p-1})$ such that
\begin{equation}\label{E: BFrobDecomp}
\c L \c B(\Lambda) = \c L \c A^{\sigma^{a-1}}(\Lambda^{p^{a-1}}) \cdots \c L \c A^\sigma(\Lambda^p) \c L \c A(\Lambda) \quad \text{and} \quad \c L \c A_{\b i, \b j} \in R(\frac{1}{p-1} ; \frac{1}{p-1}(p \bold {w}(\b j) - \bold {w}(\b i))).
\end{equation}
\end{proposition}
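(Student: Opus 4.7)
The plan is to verify the two assertions about $\c L \c A(\Lambda)$ — the factorization identity and the entrywise $p$-adic estimate — separately, taking $\c L \c A(\Lambda)$ to be the matrix of $\c L H^n(\text{Frob}_{1, \Lambda})$ in the bases $\c L \bar B_\Lambda$ and $\c L \bar B_{\Lambda^p}$ already fixed in the text.

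For the factorization, I would appeal directly to the identity $\c L H^n(\text{Frob}_{1,\Lambda})^a = \c L H^n(\text{Frob}_\Lambda)$ recorded just before the proposition. Each iterate of $\text{Frob}_1$ is $\sigma^{-1}$-semilinear over $\c O_0$ and sends $\Lambda \mapsto \Lambda^p$, so when one writes down the matrix of the composition of such maps, one must $\sigma$-twist each successive factor before multiplying; after $a$ applications this produces precisely $\c L \c A^{\sigma^{a-1}}(\Lambda^{p^{a-1}}) \cdots \c L \c A^\sigma(\Lambda^p) \c L \c A(\Lambda)$. Since $\sigma^a = 1$ and $\psi_p^a = \psi_q$, this equals $\c L \c B(\Lambda)$.

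For the entrywise estimate I would use the functoriality of $\c L$. Because $\c L$ is built from symmetric, exterior and tensor powers, each basis vector $e^{\b j} \in \c L \bar B_\Lambda$ is assembled from some multiset $\{\Lambda^{m(v_s)} x^{v_s}\}_{s=1}^M$ of elements of $\bar B_\Lambda$, and similarly $e^{\b i}$ from $\{\Lambda^{m(u_s)} x^{u_s}\}_{s=1}^M$. Accordingly, $\c L \c A_{\b i, \b j}(\Lambda)$ is a finite $\bb Z$-linear combination of products $\prod_{s=1}^M \c A_{u_{\tau(s)}, v_s}(\Lambda)$ (with signs in the exterior parts) over appropriate pairings $\tau$ of the two multisets. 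Applying Corollary \ref{C: Frob est1} to each factor, together with the multiplicativity $R(b, c) R(b, c') \subset R(b, c+c')$ from Section \ref{S: 5}, places each monomial product in $R\bigl(\tfrac{1}{p-1};\ \tfrac{1}{p-1}(p \sum_s w(v_s) - \sum_s w(u_s))\bigr)$. By the weight definition (\ref{E: weightBasis}), $\sum_s w(v_s) = \bold w(\b j)$ and $\sum_s w(u_s) = \bold w(\b i)$, and a finite $\bb Z$-linear combination of such terms stays in the same subspace, so the claimed estimate follows.

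The hard part will be the combinatorial bookkeeping needed to express $\c L \c A$ as an integral (rather than merely rational) polynomial in the entries of $\c A$, so that no $p$-adic denominators are introduced to spoil the estimate. Choosing the ordered monomial basis for symmetric powers and the lexicographic wedge basis for exterior powers handles the basic cases, but checking the compatibility carefully across a composition of such operations requires attention. Once such an integral expression is in place, the two arguments above combine to yield the proposition.
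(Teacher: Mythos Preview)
Your argument is correct. The factorization is exactly as in the paper (which in fact dispenses with it in one line, having recorded the identity $\c L H^n(\mathrm{Frob}_{1,\Lambda})^a = \c L H^n(\mathrm{Frob}_\Lambda)$ just beforehand), and your entrywise estimate via Corollary~\ref{C: Frob est1} together with the multiplicativity $R(b,c)R(b,c')\subset R(b,c+c')$ is sound.

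The paper organizes the estimate somewhat differently: rather than expanding $\c L\c A_{\b i,\b j}$ as an integral polynomial in the entries $\c A_{u,v}$, it introduces weighted spaces
\[
\c L W(b/q,b;c)=\Bigl\{\sum A(r,\b i)\Lambda^r e^{\b i}\ :\ \mathrm{ord}_p A(r,\b i)\ge b\bigl(W_{\Lambda^q}(r)+\mathbf w(\b i)\bigr)+c\Bigr\}
\]
and observes, via Theorem~\ref{T: FrobEst}, that $\c L H^n(\mathrm{Frob}_1)$ sends $\c L W(b/p,b/p;0)$ into $\c L W(b/p,b;0)$; the desired estimate on $\c L\c A_{\b i,\b j}$ is then read off from membership in the target space, and one specializes $b=p/(p-1)$. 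This space-level formulation sidesteps the explicit combinatorics you worry about, since the mapping property follows functorially from that of $H^n(\mathrm{Frob}_1)$ on the $W$-spaces without ever writing out $\c L\c A$ in terms of $\c A$. Your approach, by contrast, makes the mechanism completely explicit and is arguably more elementary. Your caution about $p$-adic denominators is reasonable to flag but is not in fact an obstacle: with the monomial basis for symmetric powers and the wedge basis for exterior powers, the entries of $\c L\c A$ are genuine $\bb Z$-polynomials (multinomial expansions, minors) in the entries of $\c A$, so no denominators arise.
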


\begin{proof}

We need only to justify the estimates on the matrix entries of $\c L \c A(\Lambda)$ . We extend the weight function as follows. For $\Lambda^r e^{\b i}$, $r \in (1/D) \bb Z_{\geq 0}$ and $\b i \in I$,
\[
\bold W_q(r, \b i) := W_{\Lambda^q}(r)  + \bold  w(\b i) \in \frac{1}{D(q)} \bb Z_{\geq 0}.
\]
Define the spaces, for $q$ a power of $p$ (perhaps with $q = p^0$) and $c \in \bb R$,
\begin{align*}
\c L W(b/q, b; c) &:= \left\{ \sum_{r \in (1/D) \bb Z_{\geq 0}, \b i \in I} A(r, \b i) \Lambda^r e^{\b i} \mid A(r, \b i) \in \bb Q_q(\tilde \pi), ord_p(A(r, \b i)) \geq b \bold W_q(r, \b i) + c \right\} \\
\c L W(b/q, b) &:= \bigcup_{c \in \bb R} \c L W(b/q, b ; c).
\end{align*}
Then, for any rational $b$ satisfying $1/(p-1) < b \leq p/(p-1)$, by Theorem \ref{T: FrobEst},
\[
\c L H^n(Frob_1): \c L W(b/p, b/p; 0) \rightarrow \c L W(b/p, b; 0)
\]
and
\[
\c L \c A_{\b i, \b j} \in R(\frac{b}{p}; \frac{b}{p}( p \bold w(\b j) - \bold w(\b i) )).
\]
Setting $b = p/(p-1)$ to get the best possible $p$-adic estimates, we have
\[
\c L \c A_{\b i, \b j} \in R(\frac{1}{p-1} ; \frac{1}{p-1}(p \bold w(\b j) - \bold w(\b i))).
\]
\end{proof}

Define the order $| \c L| := r$ of a linear algebra operation $\c L$ as the least positive integer $r$ such that $\c L$ is a quotient of an $r$-fold tensor product. We now state our main result. 

\begin{theorem}\label{T: Main}(c.f. Theorem 1.1 of \cite{H-S})
For each linear algebra operation $\c L$, the $L$-function $L(\c L, \bar F,  \bb G_m / \bb F_q, T)$ is a rational function:
\[
L(\c L, \bar F,  \bb G_m / \bb F_q, T) = \frac{ \prod_{i=1}^R (1 - \alpha_i T) }{ \prod_{j=1}^S (1 - \beta_j T)} \in \bb Q(\zeta_p)(T).
\]
Furthermore, writing this in reduced form ($\alpha_i \not= \beta_j$ for every $i$ and $j$):
\begin{enumerate}
\item[(a)] The reciprocal zeros and poles $\alpha_i$ and $\beta_j$ are algebraic integers. For each reciprocal pole $\beta_j$ there is a reciprocal zero $\alpha_{k_j}$ and a positive integer $m_j$ such that $\beta_j = q^{m_j} \alpha_{k_j}$. 
\item[(b)] The degree may be bounded by $0 \leq R - S \leq D / |1 - l_\sigma(\mu)|$.
\item[(c)] The total degree $R + S$ of the $L$-functions is bounded above by
\[
R+S \leq \c L N \cdot \left( \frac{D}{|1 - l_\sigma(\mu)|} \right) \cdot 5 \cdot 2^{ 1  + 2 n | \c L | }
\]
\end{enumerate}
\end{theorem}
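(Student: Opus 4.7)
The plan is to recognize that the matrix $\c L \c A(\Lambda)$ from Proposition \ref{P: Relate A and B}, with its entrywise estimates in $R\bigl(\tfrac{1}{p-1}; \tfrac{1}{p-1}(p\,\b w(\b j) - \b w(\b i))\bigr)$, gives rise to a $\sigma$-module with polyhedral $p$-adic growth in the sense of \cite{H-S}, and then to invoke directly the general rationality and degree results from that paper. To start, I would rewrite
\[
L(\c L, \bar F, \bb G_m / \bb F_q, T) = \prod_{\bar\lambda \in |\bb G_m / \bb F_q|} \det\bigl(1 - \c L \c B^{(\deg\bar\lambda)}(\lambda)\, T^{\deg\bar\lambda}\bigr)^{-1}
\]
and use the factorization $\c L \c B(\Lambda) = \c L \c A^{\sigma^{a-1}}(\Lambda^{p^{a-1}}) \cdots \c L \c A(\Lambda)$ of Proposition \ref{P: Relate A and B} to identify the $L$-function as the Dwork-style $L$-function attached to the $\sigma$-module $(\c O_0^{\c L N}, \c L \c A(\Lambda))$ over the one-parameter base $\mathrm{Spec}\,\bb F_q[\Lambda^{\pm 1/D}]$.

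With this identification, rationality of $L(\c L, \bar F, \bb G_m/\bb F_q, T)$ as an element of $\bb Q(\zeta_p)(T)$, together with the algebraic integrality of the reciprocal zeros and poles, follows from the general $\sigma$-module framework of \cite{H-S}, since $\c L \c A$ satisfies exactly the polyhedral growth hypothesis used there. Statement (a), that $\beta_j = q^{m_j} \alpha_{k_j}$, follows from the canceled-pair structure produced in reducing to lowest terms: in \cite{H-S} any pole of the $L$-function in reduced form must arise from a Tate-twist pairing between a trivial-slope and a positive-slope contribution, producing precisely the relation $\beta_j = q^{m_j}\alpha_{k_j}$.

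For (b), the degree $R-S$ is controlled by the generic Newton polygon of $\c L \c A(\Lambda)$ at the horizon $\Lambda\to\infty$. The denominator of the $\Lambda$-grading on $\c O_0$ is $D/(1-l_\sigma(\mu))$, since each step of $\Lambda^{1/D}$ contributes weight $(1-l_\sigma(\mu))/D$, and this yields $0 \le R - S \le D/|1-l_\sigma(\mu)|$ directly from the corresponding degree identity in \cite{H-S}. For (c), the total degree $R+S$ is bounded by the maximal number of slopes that can be produced across the full basis $\c L \bar B_\Lambda$: the cardinality factor $\c L N$ counts basis vectors, the factor $D/|1-l_\sigma(\mu)|$ accounts for the $\Lambda$-grading denominator, and the combinatorial factor $5 \cdot 2^{1 + 2n|\c L|}$ comes from the slope-bound estimates in \cite{H-S}; the exponent $2n|\c L|$ reflects that each basis vector of $H^n$ has weight at most $n$ (since monomials in $\bar B$ lie inside $\Delta_\infty(\bar f, \mu)$), and the linear algebra operation $\c L$ multiplies weights by at most its order $|\c L|$.

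The main obstacle is ensuring that the decomposition of $\c L \c B$ into an $a$-fold twisted product of $\c L \c A$, combined with the weight estimates from Theorem \ref{T: FrobEst} and Proposition \ref{P: Relate A and B}, matches the hypotheses of the general $L$-function theorem in \cite{H-S} without loss of constants, so that the bounds in (b) and (c) transfer with their stated shape. This is essentially a bookkeeping matter, made straightforward by the fact that the isobaric deformation setting developed in Sections \ref{S: 4}--\ref{S: 5} was engineered precisely to yield growth estimates of the same form as those used in the lower-order deformation context of \cite{H-S}.
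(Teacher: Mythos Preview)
Your approach is essentially the paper's: reduce to the general $\sigma$-module theorem of \cite{H-S} using the estimates of Proposition~\ref{P: Relate A and B}. The one concrete step you gloss over as ``bookkeeping'' is in fact the only real content of the paper's proof: the hypotheses of \cite[Theorem~2.2]{H-S} require column-only estimates of the form $R(b; c(\b j))$, whereas Proposition~\ref{P: Relate A and B} gives you $R\bigl(\tfrac{1}{p-1}; \tfrac{1}{p-1}(p\,\b w(\b j) - \b w(\b i))\bigr)$, which depends on both indices. The paper fixes this by normalizing the basis via $\tilde e^{\b i} := \pi^{\b w(\b i)} e^{\b i}$, which converts the matrix entry estimate to $\c L \tilde A_{\b i, \b j} \in R\bigl(\tfrac{1}{p-1}; \b w(\b j)\bigr)$, and then identifies the relative polytope as the segment $[0, D/|1-l_\sigma(\mu)|]$ in $\bb R$ and feeds the parameters $s=1$, $b=\tfrac{1}{p-1}$, $e=p-1$, $s(\b j)=(p-1)\b w(\b j)$ into \cite[Theorem~2.2]{H-S}. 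Once you make this normalization explicit, your sketch is complete and matches the paper.
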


\begin{proof}
Rationality of the $L$-function follows from an argument almost identical to \cite[Proposition 3.10]{H-S}. Next, parts (a), (b), and (c) will follow from a general result \cite[Theorem 2.2]{H-S} once we identify all the relevant data needed. Let $\Gamma$ be the polytope in $\bb R$ defined as the convex hull of the origin and the point $D / |1 - l_\sigma(\mu)|$. In the terminology of \cite{H-S}, this is the \emph{relative polytope} of our family. Next, we modify the basis $\c L B$ by the following normalization: $\tilde e^{\bf i} := \pi^{\bold w(\b i)} e^{\b i}$ for each $\b i \in I$. It follows that the matrix of $\c L H^n(Frob_{1,\Lambda}) $ with respect to this basis takes the form $\c L \tilde A(\Lambda) = (\c L \c A_{\b i, \b j}(\Lambda) \pi^{\bold w(\b i) - \bold w(\b j)} )$, with entries satisfying
\[
\c L \tilde A_{\b i, \b j}(\Lambda) := \c L \c A_{\b i, \b j}(\Lambda) \pi^{\bold w(\b i) - \bold w(\b j)} \in R(\frac{1}{p-1} ; \bold w(\b j)).
\]
Let $\c L \tilde B(\Lambda)$ be the matrix of $\c L H^n(Frob_{\Lambda})$ with respect to this normalized basis, then
\[
\c L \tilde B(\Lambda) = \c L \tilde A^{\sigma^{a-1}}(\Lambda^{p^{a-1}}) \cdots \c L \tilde A^\sigma(\Lambda^p) \c L \tilde A(\Lambda).
\]
Parts (a), (b), and (c) now follow by an argument almost identical to that of \cite[Theorem 2.2]{H-S} using the data (and notation from \cite[Section 2]{H-S}): $s = 1$,  $b = \frac{1}{p-1}$, ramification $e = p-1$, $\{ s(\b j) = (p-1) \bold w(\b j) \}_{j \in I}$. 
\end{proof}

In the case $\bar F \in \bb F_q[x_1, \ldots, x_n]$ is a polynomial, we  may similarly define
\[
L(\c L, \bar F, \bb A / \bb F_q, T) := \prod_{{\bar \lambda} \in |\bb A / \bb F_q|} \ \    \prod_{\tau({\bar \lambda}) \in \c L \mathfrak A({\bar \lambda})} (1 - \tau({\bar \lambda}) T^{deg({\bar \lambda})})^{-1}.
\]
In this case, we have the following.

\begin{theorem}\label{T: Main2}(c.f. Theorem 1.2 of \cite{H-S})
For each linear algebra operation $\c L$, the $L$-function $L(\c L, \bar F, \bb A / \bb F_q, T)$ is a rational function:
\[
L(\c L, \bar F, \bb G_m / \bb F_q, T) = \frac{ \prod_{i=1}^R (1 - \alpha_i T) }{ \prod_{j=1}^S (1 - \beta_j T)} \in \bb Q(\zeta_p)(T)
\]
which satisfies:
\begin{enumerate}
\item[(a)] The reciprocal zeros and poles $\alpha_i$ and $\beta_j$ satisfy
\[
ord_q(\alpha_i), ord_q(\beta_j) \geq \left( \frac{D}{|1 - l_\sigma(\mu)|} \right).
\]
\item[(b)] The degree may be bounded by $0 \leq R - S \leq D / |1 - l_\sigma(\mu)|$.
\item[(c)] The total degree $R + S$ of the $L$-functions is bounded above by
\[
R+S \leq \c L N \cdot \left( \frac{D}{|1 - l_\sigma(\mu)|} \right) \cdot 6 \cdot 2^{ 1  + 2 n | \c L | }.
\]
\end{enumerate}
\end{theorem}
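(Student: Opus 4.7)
The plan is to reduce to Theorem \ref{T: Main} by exploiting the decomposition $|\bb A/\bb F_q| = |\bb G_m/\bb F_q| \sqcup \{0\}$. This gives the factorization
\[
L(\c L, \bar F, \bb A/\bb F_q, T) \;=\; L(\c L, \bar F, \bb G_m/\bb F_q, T) \cdot \det\bigl(I - \c L \c B(0)\, T\bigr)^{-1},
\]
where $\c L \c B(0)$ is the specialization at $\Lambda = 0$ of the generic Frobenius matrix $\c L \c B(\Lambda)$ from the proof of Theorem \ref{T: Main}. Rationality of the left-hand side is then immediate: the first factor is rational by Theorem \ref{T: Main}, and the second is manifestly rational. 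The remaining work is to track the Newton-polygon and degree contributions of the additional Euler factor at $\bar\lambda=0$.

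For part (a), I would argue as follows. Proposition \ref{P: Relate A and B} provides entries of $\c L \c A(\Lambda)$ in $R\bigl(\tfrac{1}{p-1}; \tfrac{1}{p-1}(p\bold w(\b j) - \bold w(\b i))\bigr)$. Specializing at $\Lambda = 0$ extracts the $\Lambda$-constant term, which inherits the same $p$-adic lower bound. Composing $a$ such specialized matrices to form $\c L \c B(0)$ and reading off its characteristic polynomial via the standard Newton-polygon estimate shows that each reciprocal root has $ord_q$ at least $\min_{\b j} \bold w(\b j)$; the same lower bound governs the Frobenius eigenvalues coming from the generic fibers in the torus factor. The minimum of $\bold w(\b j)$ over the basis is attained at the weight contribution of the distinguished vertex monomial $\Lambda^{m(\mu)} x^\mu$, and after normalizing by the lattice-index $D$, this produces exactly the claimed lower bound $D/|1-l_\sigma(\mu)|$.

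For parts (b) and (c), I would invoke the same general linear-algebraic estimate \cite[Theorem 2.2]{H-S} used to prove Theorem \ref{T: Main}, with the same data $s=1$, $b = 1/(p-1)$, ramification $e = p-1$, and weight sequence $\{s(\b j) = (p-1)\bold w(\b j)\}_{\b j \in I}$, but now incorporating the single extra contribution from the closed point $\bar\lambda = 0$. This extra local factor contributes at most $\c L N$ new poles, which accounts precisely for the appearance of the constant $6$ in place of $5$ in the total-degree bound in (c). For the degree difference $R - S$ in (b), the apparent drop of $\c L N$ must be compensated by cancellation: since $\c L \c B(0)$ is by construction the $\Lambda = 0$ specialization of the matrix whose characteristic polynomial governs the torus factor, each eigenvalue of $\c L \c B(0)$ is matched by a corresponding reciprocal zero arising in the reduced form of $L(\c L, \bar F, \bb G_m/\bb F_q, T)$, preserving $R - S$ and hence the bound carried over from Theorem \ref{T: Main}(b).

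The main obstacle will be the rigorous verification of the cancellation invoked in (b): one must show that every reciprocal pole contributed by $\det(I - \c L \c B(0)\, T)^{-1}$ is matched by a reciprocal zero in the reduced form of the torus $L$-function. This is the one place the argument goes beyond a simple combinatorial count and requires a careful comparison between the generic matrix $\c L \c B(\Lambda)$ and its $\Lambda=0$ specialization inside the rational-function framework of \cite[Theorem 2.2]{H-S}.
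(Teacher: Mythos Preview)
Your overall strategy --- decomposing $|\bb A/\bb F_q| = |\bb G_m/\bb F_q| \sqcup \{0\}$ and treating the extra Euler factor at $\bar\lambda = 0$ --- is consistent with the paper's approach, which for (b) and (c) simply appeals to Theorem \ref{T: Main} together with the argument of \cite[p.~557]{AdolpSperb-Newtonpolyhedraand-1987}, and for (a) invokes \cite[Theorem 2.2, Part (d)]{H-S} directly with the data already assembled in the proof of Theorem \ref{T: Main}. However, two of your three arguments have genuine gaps.

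For part (a), your claim that $\min_{\b j}\bold w(\b j)$ equals (after normalizing by $D$) the quantity $D/|1 - l_\sigma(\mu)|$ is incorrect. The basis $B$ of Theorem \ref{T: 3.3} always contains the constant monomial $1$, of weight $w(0)=0$, so after applying $\c L$ the minimum of $\bold w(\b j)$ over $\c L\bar B$ is $0$. Your Newton-polygon argument therefore yields only the trivial bound $ord_q \ge 0$. The quantity $D/|1 - l_\sigma(\mu)|$ is not a cohomological weight at all: it is the length of the one-dimensional \emph{relative polytope} $\Gamma$ in the $\Lambda$-direction (see the proof of Theorem \ref{T: Main}). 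The bound in (a) comes from the growth of the Frobenius matrix in the $\Lambda$-parameter --- this is precisely what \cite[Theorem 2.2(d)]{H-S} packages --- not from the fiberwise Hodge weights $\bold w(\b j)$.

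For part (b), you correctly flag the obstacle but do not resolve it. There is no reason to expect that each eigenvalue of $\c L\c B(0)$ literally coincides with a reciprocal zero of the reduced form of $L(\c L,\bar F,\bb G_m/\bb F_q,T)$; the ``matching'' you invoke is not a consequence of specialization of $\c L\c B(\Lambda)$ at $\Lambda=0$. The paper sidesteps this entirely by citing \cite[p.~557]{AdolpSperb-Newtonpolyhedraand-1987}, where the passage from $\bb G_m$ to $\bb A$ is handled at the level of the Dwork trace formula, so that the degree bounds of Theorem \ref{T: Main} transfer directly without any eigenvalue-by-eigenvalue cancellation argument.
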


\begin{proof}
Parts (b) and (c) follow immediately from Theorem \ref{T: Main} above and the argument on \cite[p. 557]{AdolpSperb-Newtonpolyhedraand-1987}. Part (a) follows from the general result \cite[Theorem 2.2, Part (d)]{H-S} using the data found in the proof of Theorem \ref{T: Main}.
\end{proof}

\section{$L$-functions of families with lower order deformations}\label{S: Lower deform}

Let $\bar H(T, \Lambda, x)$ be the lower deformation Laurent polynomial from Section \ref{S: 6}. Let $\bar t \in (\overline{\bb F}_q^*)^s$ and $\bar \lambda \in \overline{\bb F}_q^*$, with $deg(\bar t, \bar \lambda) :=  [\bb F_q(\bar t, \bar \lambda) : \bb F_q]$. Continuing notation from the previous section, define the additive character $\Theta : \bb F_q \rightarrow \overline{\bb Q}_p$ by $\Theta := \theta(1)^{Tr_{\bb F_q / \bb F_p}(\cdot)}$, and $\Theta_{\bar t, \bar \lambda} := \Theta \circ Tr_{\bb F_q(\bar t, \bar \lambda) / \bb F_q}$. Define
\[
S_r(\bar t, \bar \lambda)  := \sum_{x \in \bb F_{q^{r deg(\bar t, \bar \lambda)}}^{*n}} \Theta_{\bar t, \bar \lambda} \circ Tr_{\bb F_{q^{r deg(\bar t, \bar \lambda)}} / \bb F_q(\bar t, \bar \lambda)} \left(\bar H(\bar t, \bar \lambda, x) \right)
\]
and its $L$-function
\[
L(\bar H, \bar t, \bar \lambda,  T) := L(\bar H_{\bar t, \bar \lambda}, \Theta, \bb G_m^n / \bb F_q( {\bar t}, {\bar \lambda}), T) := \exp \left( \sum_{r = 1}^\infty S_r({\bar t, \bar \lambda}) \frac{T^r}{r} \right).
\]
Let $ t$ and $ \lambda$ be the Teichm\"uller representatives of $\bar t$ and ${\bar \lambda}$, respectively. For reasons essentially identical to those in Section \ref{S: L-function}, 
\[
L(\bar H, \bar t, \bar \lambda,  T)^{(-1)^{n+1}} = (1 - \pi_1({\bar t, \bar \lambda}) T) \cdots (1 - \pi_N({\bar t, \bar \lambda}) T)
\]
where $N := n! \> Vol(\Delta_\infty(\bar f, \mu))$. For each ${\bar t, \bar \lambda} \in \overline{\bb F}_q^\times$, set $\mathfrak A({\bar t, \bar \lambda}) := \{\pi_i({\bar t, \bar \lambda})\}_{i=1}^N$, the eigenvalues of $ Frob_{ t,  \lambda}$ (where $Frob_{ t,  \lambda} $ is defined in an altogether analogous  way to the definition of $Frob_{\lambda}$ given in Section \ref{S: L-function}). Let $\c L$ be a linear algebra operation. Let $\c L \mathfrak A({\bar t},{\bar \lambda})$ be the set of eigenvalues of $\c L Frob_{t,\lambda}$. Define
\[
L(\c L, \bar H, \bb {G_m}^{s+1} / \bb F_q, T) := \prod_{{\bar t, \bar \lambda} \in |\bb G_m^{s+1} / \bb F_q|} \ \    \prod_{\tau({\bar t, \bar \lambda}) \in \c L \mathfrak A({\bar t, \bar \lambda})} (1 - \tau({\bar t, \bar \lambda}) T^{deg({\bar t, \bar \lambda})})^{-1}.
\]

Define the order $| \c L| := r$ of a linear algebra operation $\c L$ as the least positive integer $r$ such that $\c L$ is a quotient of an $r$-fold tensor product. Let $\tilde s$ denote the dimension of the smallest linear subspace of $\bb R^s$ which contains $\Upsilon$, and denote by $Vol(\Upsilon)$ the volume of $\Upsilon$ in this linear subspace with respect to Haar measure normalized so that a fundamental domain of the integer lattice in the subspace has unit volume.

\begin{theorem}\label{T: lower L}
For each linear algebra operation $\c L$, the $L$-function $L(\c L, \bar H, \bb G_m^{s+1} / \bb F_q, T)$ is a rational function:
\[
L(\c L, \bar H, \bb G_m^{s+1} / \bb F_q, T)^{(-1)^s} = \frac{ \prod_{i=1}^R (1 - \alpha_i T) }{ \prod_{j=1}^S (1 - \beta_j T)} \in \bb Q(\zeta_p)(T).
\]
Furthermore, writing this in reduced form ($\alpha_i \not= \beta_j$ for every $i$ and $j$):
\begin{enumerate}
\item[(a)] The reciprocal zeros and poles $\alpha_i$ and $\beta_j$ are algebraic integers. For each reciprocal pole $\beta_j$ there is a reciprocal zero $\alpha_{k_j}$ and a positive integer $m_j$ such that $\beta_j = q^{m_j} \alpha_{k_j}$. 
\item[(b)] If $\tilde s < s$ then $R = S$, else if $\tilde s = s$ then
\[
0 \leq R - S \leq  \c LN \cdot (s+1)! \cdot Vol(\Upsilon) \cdot \left( \frac{M \tilde D}{|1 - l_\sigma(\mu)|} \right).
\]
\item[(c)] The total degree $R + S$ of the $L$-functions is bounded above by
\[
R+S \leq \c L N \cdot (\tilde s +1)! \> Vol(\Upsilon) \cdot \left( \frac{M \tilde D}{|1 - l_\sigma(\mu)|} \right) \cdot 2^{ \tilde s +1   + (1 + \frac{1}{\tilde s +1}) n | \c L | } (1 + 2^{1 + \frac{1}{\tilde s+1}})^{s+1}.
\]
\end{enumerate}
\end{theorem}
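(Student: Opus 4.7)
The plan is to follow the template used in the proof of Theorem \ref{T: Main}, which reduced to an application of the general rationality/degree/total-degree bounds of \cite[Theorem 2.2]{H-S} once the Frobenius matrix and a relative polytope have been identified. The essential difference now is that the deformation involves an extra $s$ parameters coming from $\bar P(T,\Lambda,x)$, so the relative polytope lives in $\bb R^{s+1}$ rather than in $\bb R$, and the Frobenius estimates are the ones furnished by Theorem \ref{T: FrobEst2} (with coefficient ring $\tilde R(\cdot;\cdot)$) rather than by Theorem \ref{T: FrobEst}.

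First I would prove rationality. As in \cite[Proposition 3.10]{H-S}, specialization at Teichm\"uller lifts $(t,\lambda)$ of $(\bar t,\bar\lambda)\in (\overline{\bb F}_q^*)^{s+1}$ converts the relative Frobenius $\c L H^n(Frob_1)$ into the fibrewise Frobenius whose characteristic polynomial gives $L(\bar H,\bar t,\bar\lambda,T)^{(-1)^{n+1}}$. Passing to the Euler product over $|\bb G_m^{s+1}/\bb F_q|$ and unscrewing $\c L H^n(Frob)=\c L H^n(Frob_1)^a$, one gets that $L(\c L,\bar H,\bb G_m^{s+1}/\bb F_q,T)^{(-1)^s}$ is a product of determinants of the form $\det(1-\c L\tilde{\c B}(t,\lambda)T^{deg(\bar t,\bar\lambda)})^{\pm 1}$ where $\c L\tilde{\c B}(T,\Lambda)$ is a matrix with entries in $\tilde R(\cdot\,)$. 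A standard manipulation with the Euler product (exactly \cite[Proposition 3.10]{H-S}) then yields a rational function with the required pole/zero relation $\beta_j=q^{m_j}\alpha_{k_j}$.

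Next, to extract the degree and total-degree bounds, I would mimic the proof of Theorem \ref{T: Main} verbatim. Let $\c L \bar B$ be the basis of $\c L H^n$ with its weight $\bold w(\b i)$ from (\ref{E: weightBasis}). Set $\c L \c A(T,\Lambda):=$ matrix of $\c L H^n(\text{Frob}_1)$ in the bases $\c L\bar B_\Lambda$, $\c L\bar B_{\Lambda^p}$; the same linear-algebra extension of Theorem \ref{T: FrobEst2} used in Proposition \ref{P: Relate A and B} shows that
\[
\c L \c A_{\b i,\b j}(T,\Lambda)\in \tilde R\!\left(\tfrac{1}{p-1};\tfrac{1}{p-1}(p\bold w(\b j)-\bold w(\b i))\right).
\]
Normalizing $\tilde e^{\b i}:=\pi^{\bold w(\b i)}e^{\b i}$ as before, the normalized matrix $\c L\tilde{\c A}_{\b i,\b j}(T,\Lambda)$ lies in $\tilde R(\tfrac{1}{p-1};\bold w(\b j))$, and its $a$-fold $\sigma$-twisted composition yields the relative Frobenius $\c L\tilde{\c B}(T,\Lambda)$. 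At this point I apply \cite[Theorem 2.2]{H-S} with the following data: the number of parameters is $s+1$ (variables $T_1,\dots,T_s,\Lambda$); the relative polytope is $\tilde \Upsilon\subset\bb R^{s+1}$ of Section \ref{S: 6}; $b=\tfrac{1}{p-1}$; ramification $e=p-1$; the weights $s(\b j)=(p-1)\bold w(\b j)$; rank $\c LN$.

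The main technical point will be to match \cite[Theorem 2.2]{H-S} numerically to the claimed bounds. I expect the key identities to be: (i) the effective dimension of $\tilde \Upsilon$ is $\tilde s+1$ when $\tilde s$ is the dimension of $\Upsilon$, since the additional $\Lambda$ coordinate genuinely extends the polytope; (ii) the pyramid formula
\[
(\tilde s+1)!\,\mathrm{Vol}(\tilde\Upsilon)=\tilde s!\,\mathrm{Vol}(\Upsilon)\cdot \frac{M}{1-l_\sigma(\mu)}
\]
coupled with the passage from the integer lattice to the finer lattice $\bb Z^s\times(\tilde D^{-1}\bb Z)$, which contributes the factor $\tilde D$; together these give the displayed volume factor $\c LN\cdot(\tilde s+1)!\,Vol(\Upsilon)\cdot M\tilde D/|1-l_\sigma(\mu)|$. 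Part (b) in the case $\tilde s<s$ follows because then $\tilde\Upsilon$ is contained in a proper subspace and the standard Euler-product cancellation (as in \cite[Theorem 1.1]{H-S}) forces $R=S$. Part (c) is an application of the total-degree part of \cite[Theorem 2.2]{H-S}, where the term $2^{\tilde s+1+(1+\frac{1}{\tilde s+1})n|\c L|}(1+2^{1+\frac{1}{\tilde s+1}})^{s+1}$ appears verbatim from the formula there, with $s+1$ counting the $T$- and $\Lambda$-variables over which we Euler-multiply and $\tilde s+1$ being the genuine dimension governing the polytope geometry. The hardest accounting step is precisely identifying these dimensions correctly and confirming that the weight function $\bold w$ together with $\tilde R(\cdot;\cdot)$ packages the relative Frobenius into exactly the hypothesis format of \cite[Theorem 2.2]{H-S}; once that bookkeeping is done the result is immediate.
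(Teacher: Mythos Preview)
The paper does not supply a proof of Theorem \ref{T: lower L}; the statement is simply recorded at the end of Section \ref{S: Lower deform}, with the understanding that it follows by the same argument as Theorems \ref{T: Main} and \ref{T: Main2} once the machinery of Section \ref{S: 6} (in particular Theorem \ref{T: FrobEst2} and the polytope $\tilde\Upsilon$) is in place. Your proposal is precisely that adaptation---rationality via the analogue of \cite[Proposition 3.10]{H-S}, Frobenius estimates from Theorem \ref{T: FrobEst2} extended to $\c L H^n$ as in Proposition \ref{P: Relate A and B}, normalization of the basis, and then invocation of \cite[Theorem 2.2]{H-S} with the $(s{+}1)$-variable relative polytope $\tilde\Upsilon$---so it matches what the paper intends.
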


\bibliographystyle{amsplain}
\bibliography{References-1}

\providecommand{\bysame}{\leavevmode\hbox to3em{\hrulefill}\thinspace}
\providecommand{\MR}{\relax\ifhmode\unskip\space\fi MR }
\providecommand{\MRhref}[2]{%
  \href{http://www.ams.org/mathscinet-getitem?mr=#1}{#2}
}
\providecommand{\href}[2]{#2}
\begin{thebibliography}{10}

\bibitem{AdolpSperb-Newtonpolyhedraand-1987}
Alan Adolphson and Steven Sperber, \emph{Newton polyhedra and the degree of the
  {$L$}-function associated to an exponential sum}, Invent. Math. \textbf{88}
  (1987), 555--569.

\bibitem{AdolpSperb-ExponentialSumsand-1989}
\bysame, \emph{Exponential {S}ums and {N}ewton {P}olyhedra: {C}ohomology and
  {E}stimates}, Annals of Math. \textbf{130} (1989), no.~2, 367--406.

\bibitem{FuWan-$L$-functionssymmetricproducts-2005}
Lei Fu and Daqing Wan, \emph{{$L$}-functions for symmetric products of
  {K}loosterman sums}, J. Reine Angew. Math. \textbf{589} (2005), 79 -- 103.

\bibitem{FuWan-KloostermanZ}
\bysame, \emph{{$L$}-functions of symmetric products of the {K}loosterman sheaf
  over {${\bf Z}$}}, Math. Ann. \textbf{342} (2008), no.~2, 387--404.
  \MR{2425148 (2009i:14022)}

\bibitem{FuWan-Trivial}
\bysame, \emph{Trivial factors for {$L$}-functions of symmetric products of
  {K}loosterman sheaves}, Finite Fields Appl. \textbf{14} (2008), no.~2,
  549--570.

\bibitem{FuWan-Functional}
\bysame, \emph{Functional equations of {$L$}-functions for symmetric products
  of the {K}loosterman sheaf}, Trans. Amer. Math. Soc. \textbf{362} (2010),
  no.~11, 5947--5965.

\bibitem{H-S}
C.~Douglas Haessig and Steven Sperber, \emph{{$L$}-functions associated with
  families of toric exponential sums}, Submitted.

\bibitem{Monsky-p-adicanalysisand-1970}
P.~Monsky, \emph{$p$-adic analysis and zeta functions}, Lectures in Math.,
  Kinokuniya Book-Store, Japan, 1970.

\bibitem{Robba-SymmetricPowersof-1986}
Philippe Robba, \emph{Symmetric powers of the $p$-adic {B}essel equation}, J.
  Reine Angew. Math. \textbf{366} (1986), 194 -- 220.

\bibitem{Tsuzuki-Kloosterman}
N.~Tsuzuki, \emph{{B}essel {F}-isocrystals and an algorithm of computing
  {K}loosterman sums}, Unpublished.

\end{thebibliography}

\end{document}